\numberwithin{equation}{section}
\newcommand{\bea}{\begin{eqnarray}}
\newcommand{\eea}{\end{eqnarray}}
\newcommand{\Bea}{\begin{eqnarray*}}
\newcommand{\Eea}{\end{eqnarray*}}
\theoremstyle{plain}
\newtheorem{Thm}{Theorem}[section]
\newtheorem{Lem}[Thm]{Lemma}
\newtheorem{Prop}[Thm]{Proposition}
\newtheorem{Cor}[Thm]{Corollary}
\newtheorem{Def}[Thm]{Definition}
\newtheorem{Rem}[Thm] {Remark}
\theoremstyle{remark}
\long\def\begcom#1\endcom{}
\newcommand{\Sep}{\operatorname{Sep}}
\newcommand{\wSep}{\widehat{{\rm Sep}}}
\newcommand{\rd}{{\rm d}}
\newcounter{RomanNumber}
\def \N{{\mathbb N}}
\def\R{          \mathbb R}
\def\DS{\displaystyle}
\def\ln{\operatorname{ln}}
\def\brA{{\bar A}}
\def\brC{{\bar C}}
\def\bA{\bar A}
\def\brtheta{{\bar\theta}}
\def\bS{{\mathbf{S}}}
\def\bv{{\mathbf{v}}}
\def\cA{{\mathcal A}}
\def\cD{{\mathcal D}}
\def\cU{{\mathcal U}}
\def\diam{{\rm diam}}
\def\tB{{\tilde B}}
\def\bsigma{\bar{\sigma}}
\def\bfE{\bar{E}}
\def\fv{\sigma}
\def\hB{{\hat{B}}}
\definecolor{bluegray}{rgb}{0.1, 0.1, 0.6}
\definecolor{dgreen}{rgb}{0.1,0.6,0.1}
\definecolor{bluegreen}{rgb}{0.1,0.5,0.2}
\definecolor{bpurple}{rgb}{0.74,0.2,0.64}
\definecolor{orange}{rgb}{0.8, 0.33, 0.0}
\def\EXP{{\mathbb{E}}}
\def\Prob{{\mathbb{P}}}
\def\eps{{\varepsilon}}
\definecolor{deepcarrotorange}{rgb}{1, 0.31, 0.0}
\title[Generalized multiple BC Lemma in dynamics and its applications.]{Generalized multiple Borel-Cantelli Lemma in dynamics and its applications}
\author{Sixu Liu}
\address[S.~Liu]{Beijing Institute of Mathematical Sciences and Applications\\
 Beijing 101408, China}
\email{liusixu@bimsa.cn}
\begin{document}
\maketitle
\begin{abstract}
Multiple Borel-Cantelli Lemma is a criterion that characterizes the occurrence of multiple rare events on the same time scale. We generalize the multiple Borel-Cantelli Lemma in dynamics established  by Dolgopyat, Fayad and Liu [J. Mod. Dyn. {\bf 18} (2022) 209--289], broadening its applications to encompass several non-smooth systems  with absolute  continuous measures.  Utilizing this generalization, we  derive multiple Logarithm Law for  hitting time and recurrence of  dispersing billiard maps and piecewise expanding maps under some regular conditions, including tent map, Lorentz-like map and Gauss map.
 \end{abstract}

\maketitle
\section{Introduction}
Borel-Cantelli Lemma is primarily used to study the occurrence of events as a fundamental tool in probability theory. Let $(\Omega, \mathcal{F}, \mathbb{P})$ be a probability space and $E_n\subset\Omega$ be a sequence of measurable sets. A classical Borel-Cantelli Lemma states:
\begin{itemize}
 \item[(i)] If $\sum_{n=1}^{\infty} \Prob\left(E_n\right)<\infty,$ then almost every point belongs to finitely many $E_n.$
   \item[(ii)] If $\sum_{n=1}^{\infty} \Prob\left(E_n\right)=\infty$ and the events $\{E_n\}_{n=1}^\infty$ are independent, then almost every point belongs to infinitely many $E_n.$
\end{itemize}
Since the events in dynamical systems are obtained by iterations and often lack independence, for Borel-Cantelli Lemmas in dynamical systems, we replace the independence condition with weakly dependent conditions.

Research on dynamical Borel-Cantelli Lemmas and their applications for recurrence has yielded rich results.
One of the first Borel-Cantelli lemma for dynamical systems was established by Philipp  for certain expanding maps on the unit interval~\cite{Philipp67}. Schmidt gave conditions for a sequence of non-independent events to satisfy the Borel-Cantelli lemma, with a context related to Diophantine approximations~\cite{Spr79}. For Anosov diffeomorphisms, Chernov and Kleinbock~\cite{CK06} obtained the dynamical Borel-Cantelli lemma, while Barreira and Saussol~\cite{BS01} established Logarithm Law for recurrence. The corresponding results for partially hyperbolic systems were extended by Dolgopyat~\cite{Dolgopyat04}. Dynamical Borel-Cantelli Lemmas  have also been proven for some non-smooth systems, including uniformly expanding maps and nonuniformly expanding maps~\cite{FMP12,GNO10,KZ23}.

Dynamical Borel-Cantelli Lemmas  serve as  fundamental tools for characterizing  first-time recurrence. To provide a quantitative description of multiple recurrence, we require  multiple Borel-Cantelli Lemma. This criterion determines the occurrence of multiple rare events on the same time scale.  Considering a family of events $\{E_{\rho_n}^k\}_{1\leq k\leq n}$ and $r\geq1,$ multiple Borel-Cantelli Lemma provides conditions under which the events $E_{\rho_n}^k,\,1\leq k\leq n$ occur at least $r$ times for infinitely many $n.$ This  facilitates diverse applications in addressing recurrence problems of dynamical systems and number theory.

In the independent case, multiple Borel-Cantelli Lemma was originally obtained by Mori~\cite{Mo76}, and subsequent work by Aaronson and  Nakada~\cite{AN} explored its applicability to systems with good symbolic dynamics. For  broader dynamical system applications, Dolgopyat, Fayad and Liu~\cite{DFL22} constructed a comprehensive framework for establishing  multiple Borel-Cantelli Lemma. The results imply multiple Logarithm Laws  for recurrence and hitting times, as well as Poisson Limit Laws for systems that are exponentially  mixing of all orders.  Furthermore, Dolgopyat and Liu~\cite{DL23} explored the application of multiple Borel-Cantelli Lemma to the study of  heavy-tailed random variables and established  an analogue of the Law of Iterated Logarithm.

The multiple Borel-Cantelli Lemma established in~\cite{DFL22,DL23}  has found applications in the study of smooth systems with smooth measures. However, when dealing with systems that exhibit singularities, the dynamics become more intricate, presenting additional challenges for researchers. This paper aims to  extend  multiple Borel-Cantelli Lemma for broader applicability across various systems. Furthermore, we apply the generalized multiple Borel-Cantelli Lemma to characterize multiple hitting times and recurrence  for dispersing billiard maps and piecewise expanding maps under some regularity conditions, including tent map, Lorentz-like map and Gauss map.

The generalized multiple Borel-Cantelli Lemma presents a relaxation of the mixing conditions that were previously established in~\cite{DFL22}. Specifically, it requires that the probabilities of the occurrence of separated events are bounded by a multiple of the product of their individual probabilities, rather than being approximated directly by the product. Such a refinement broadens the scope of the multiple Borel-Cantelli Lemma, making it applicable to  non-smooth systems with absolutely continuous invariant measures, as opposed to the restrictive context of smooth maps with smooth measures.

In addition, \cite{DFL22} established a criterion used to analyse rare events characterized by visits to a sublevel set of a Lipschitz function by trajectories of a smooth system  that is multiple exponential mixing for Lipschitz functions.  However, non-smooth systems, such as dispersing billiard maps and piecewise expanding maps, do not exhibit exponential mixing properties for Lipschitz functions equipped with the Lipschitz norm. To address this, we introduce the spaces of dynamically H\"older continuous functions and functions with bounded variation, each equipped with corresponding norms. Multiple exponential mixing properties of these spaces enable us to establish multiple Logarithm Law for hitting times and recurrence.

The remainder of this paper is organized as follows. In Section \ref{ScGMBC}, we describe  generalized multiple Borel-Cantelli Lemma and explore its applications to multiple exponentially mixing systems. In Section \ref{SSMultExpMixBil}, we establish   multiple Logarithm Law for hitting time and recurrence of dispersing billiard maps. The corresponding results for  piecewise expanding maps are presented in Section \ref{SSMultExpMixPem}. Finally, for the sake of completeness,  we include some standard proofs as appendices.

\section{Generalized multiple Borel-Cantelli Lemma and applications.}\label{ScGMBC}
\subsection{Generalized multiple Borel-Cantelli Lemma.}
Let $(\Omega, \mathcal{F}, \mathbb P)$ be a probability space, $\{\rho_n\}_{n=1}^\infty$ be a sequence of positive numbers, and $\{ E_{\rho_{n}}^k \}_{(n,k) \in \N^2; 1\leq k \leq 2n}$ be a family of events.
Given $r\in\mathbb{Z}_+,$ we denote
$$N^n_{\rho_n}(\omega)=\sharp\left\{1\leq k\leq n:\,\,\omega\in E_{\rho_{n}}^k \right\}$$
and $H_r$ the set of points such that $E^k_{\rho_n}$ occur at least $r$ times for infinitely many $n,$ i.e. $$H_r=\bigcap_{m=1}^\infty\bigcup_{n=m}^\infty\left\{\omega\in\Omega:N_{\rho_n}^n(w)\geq r\right\}.$$
Our aim is to establish a criterion for which the set $H_r$ has either zero or positive measure. To achieve this, we introduce several conditions that quantify the asymptotic independence between the events $\{ E_{\rho_{n}}^k \}_{(n,k) \in \N^2; 1\leq k \leq 2n}.$

Let $s$ and $\hat{s}$ be functions $\mathbb{Z}_+\rightarrow\mathbb{R}_+$ such that
$$s(n) \leq C(\ln n)^2 \quad \text{and} \quad \varepsilon n\leq \hat{s}(n) <\frac{n(1-q)}{2r}$$ for some constants $C>0,$ $0<q<1$ and $0<\varepsilon<(1-q)/{(2r)}.$ We employ $\Sep$ and  $ \wSep$ as the {\it separation indices} to represent the separated time of the occurrence of the events $\{ E_{\rho_{n}}^k \}_{(n,k) \in \N^2; 1\leq k \leq 2n}$:
\begin{align*}\Sep_n(k_1,\dots, k_r)=\sharp\left\{0\leq j\leq r-1: k_{j+1}-k_j\geq s(n)\right\},\\
 \wSep_n(k_1,\dots, k_r)=\sharp\left\{0\leq j\leq r-1: k_{j+1}-k_j\geq \hat{s}(n)\right\}\end{align*}
for any $r$-tuple $0<k_1<k_2\dots<k_r \leq n,$ where $k_0=0.$

Suppose $\sigma:\R_+ \to \R_+$ represents a monotonically  function, and $C$ is a positive constant. We introduce the following conditions for the family of events $\{ E_{\rho_{n}}^k \}_{(n,k) \in \N^2; 1\leq k \leq 2n}.$
Conditions $(GM1)_{r}$ and $(GM3)_{r}$ are imposed as mixing conditions, while condition $(GM2)_{r}$​ serves as a non-clustering condition.

\begin{itemize}

\item[$(GM1)_r$]
If $0< k_1<k_2<\dots k_r\leq n$ are such that $\Sep_n(k_1, \dots, k_r)=r$ then
$$
C^{-1}\fv(\rho_n)^r \leq
  \Prob\left(\bigcap_{j=1}^r E^{k_j}_{\rho_n} \right)\leq C\fv(\rho_n)^r.$$

\item[$(GM2)_r$] If $0< k_1<k_2<\dots k_r\leq n$ are such that $\Sep_n(k_1, \dots, k_r)=m<r$, then
$$  \Prob\left(\bigcap_{j=1}^r E^{k_j}_{\rho_n} \right)
\leq {\frac{C \fv(\rho_n)^m}{(\ln n)^{100r}}}.$$
\item[$(GM3)_r$]  If $0<k_1<k_2<\dots<k_r<l_1<l_2<\dots<l_r$ are such that  $2^i<k_\alpha\leq 2^{i+1}, 2^j<l_\beta\leq 2^{j+1}$ for $1\leq\alpha,\beta\leq r$, $j-i\geq b$ for some constant $b\geq 1,$ and such that
$$\wSep_{2^{i+1}}(k_1,\ldots,k_r)=r, \quad \wSep_{2^{j+1}}(l_1,\ldots,l_r)=r,
 \quad l_1-k_r\geq \hat{s}(2^{j+1}),$$
then
$$
\Prob\left(\left(\bigcap _{\alpha=1}^r E^{k_\alpha}_{\rho_{2^i}}\right)\bigcap
\left( \bigcap_{\beta=1}^r E^{l_\beta}_{\rho_{2^j}} \right)\right)\leq
C\fv(\rho_{2^i})^r\fv(\rho_{2^j})^r.$$
\end{itemize}

\begin{Thm}\label{GMultiBCN}
  Given a family of events $\{ E_{\rho_{n}}^k \}_{(n,k) \in \N^2; 1\leq k \leq 2n}$ and define
$$ \bS_r=\sum_{j=1}^\infty 2^{rj} {\fv(\rho_{2^j})^r}. $$
\begin{itemize}
\item[(a)] Suppose $E_{\rho_{n_1}}^k\subset E_{\rho_{n_2}}^k$ and $\fv(\rho_{n_1})\leq\fv(\rho_{n_2})$ for $n_1\geq n_2.$ If $\bS_r<\infty,$ $(GM1)_r$ and
$(GM2)_r$ are satisfied, then $\Prob(H_r)=0.$
\item[(b)] If $\bS_r=\infty,$  $(GM1)_k$ and $(GM2)_k$ are satisfied for $k=r,r+1,\dots, 2r$ and $(GM3)_r$ is satisfied,
then $\Prob(H_r)>0.$
\end{itemize}
\end{Thm}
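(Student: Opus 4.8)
The plan is to read part (a) as a first Borel--Cantelli (convergence) statement and part (b) as a second-moment (divergence) statement, with the dyadic scales $n=2^j$ carrying both arguments.

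For part (a) I would first pass to the dyadic subsequence. From $\{N^n_{\rho_n}\ge r\}=\bigcup_{0<k_1<\dots<k_r\le n}\bigcap_{i=1}^r E^{k_i}_{\rho_n}$ and a union bound I split $\Prob(N^n_{\rho_n}\ge r)\le\sum_{\mathbf k}\Prob\big(\bigcap_i E^{k_i}_{\rho_n}\big)$ according to $m=\Sep_n(\mathbf k)$. The separated tuples ($m=r$) number at most $\binom{n}{r}$ and, by $(GM1)_r$, contribute $\lesssim n^r\fv(\rho_n)^r$; a tuple with $\Sep_n=m<r$ is determined by its $m$ leading points ($\lesssim n^m$ choices) and its $r-m$ increments below $s(n)$ ($\lesssim s(n)^{r-m}\le C(\ln n)^{2(r-m)}$ choices), so by $(GM2)_r$ the clustered part contributes $\lesssim (\ln n)^{-98r}\,(n\fv(\rho_n))^m$. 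Along $n=2^j$ the separated term sums to $\bS_r<\infty$, while $\bS_r<\infty$ forces $2^j\fv(\rho_{2^j})$ to be bounded, so each clustered term is $\lesssim(\ln 2^j)^{-98r}=(j\ln 2)^{-98r}$, again summable. Working at scale $2^j$ but over the range $1\le k\le 2\cdot2^j$ (permitted since the family is defined up to $2n$), and then using the monotonicity hypotheses $E^k_{\rho_n}\subset E^k_{\rho_{2^j}}$, $\fv(\rho_n)\le\fv(\rho_{2^j})$ for $2^j\le n<2^{j+1}$, I get $\{N^n_{\rho_n}\ge r\}\subset\big\{\#\{1\le k\le 2^{j+1}:\omega\in E^k_{\rho_{2^j}}\}\ge r\big\}$; hence $H_r$ is contained in the $\limsup$ of a summable sequence of events and $\Prob(H_r)=0$.

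For part (b), since $\limsup_j\{N^{2^j}_{\rho_{2^j}}\ge r\}\subset H_r$, it suffices to produce infinitely many separated occurrences at dyadic scales. I would introduce the counting variables
\[
W_j=\sum_{\substack{0<k_1<\dots<k_r\le 2^j\\ \wSep_{2^j}(k_1,\dots,k_r)=r}}\one\Big[\,\bigcap_{i=1}^r E^{k_i}_{\rho_{2^j}}\,\Big],\qquad B_j=\{W_j\ge1\},
\]
so that $B_j\subset\{N^{2^j}_{\rho_{2^j}}\ge r\}$ and it is enough to show $\Prob(\limsup_j B_j)>0$. Because $\hat s(n)<n(1-q)/(2r)$ there are $\asymp(2^j)^r$ admissible $\wSep$-separated tuples, each of which is also $\Sep$-separated once $j$ is large; thus $(GM1)_r$ gives $\E W_j\asymp(2^j)^r\fv(\rho_{2^j})^r$ and $\sum_j\E W_j\asymp\bS_r=\infty$. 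The engine is then the second-moment (Kochen--Stone) inequality $\Prob(\limsup_jB_j)\ge\limsup_N\big(\sum_{j\le N}\Prob(B_j)\big)^2/\sum_{i,j\le N}\Prob(B_i\cap B_j)$, for which I need a matching upper bound on $\E(W_iW_j)$.

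The same-scale moment $\E W_j^2$ is expanded over pairs of separated $r$-tuples; their union has between $r$ and $2r$ points, so pairs with separated union are handled by $(GM1)_k$ and pairs with clustered union by $(GM2)_k$ for $r\le k\le 2r$, the $(\ln 2^j)^{-100k}$ gain defeating the $(\ln 2^j)^{O(r)}$ combinatorial cost; this gives $\E W_j^2\le C\big((\E W_j)^2+\E W_j\big)$, hence $\Prob(B_j)\ge(\E W_j)^2/\E W_j^2\gtrsim\E W_j/(1+\E W_j)$ and $\sum_j\Prob(B_j)=\infty$. For the cross terms with $j-i\ge b$ I would invoke $(GM3)_r$ to obtain $\E(W_iW_j)\lesssim\E W_i\,\E W_j$, and absorb the boundedly many nearby scales $0<j-i<b$ through Cauchy--Schwarz and the same-scale bound; summation then yields $\sum_{i,j\le N}\Prob(B_i\cap B_j)\lesssim(\sum_{j\le N}\E W_j)^2$, closing the Kochen--Stone estimate and giving $\Prob(H_r)>0$. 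I expect the real obstacle to be the cross-scale term: the $W_j$ are built from $\wSep$-separation so that $(GM3)_r$ can fire, yet $(GM3)_r$ additionally requires $l_1-k_r\ge\hat s(2^{j+1})$, which is not automatic, since $\varepsilon<(1-q)/(2r)$ allows a scale-$j$ tuple to begin as early as $l_1\approx\varepsilon 2^j$ and thus to sit too close to (or interleave with) the scale-$i$ block. Bounding the contribution of these ``too close'' pairs by an auxiliary estimate showing they are of lower order, and dovetailing it with the same-scale clustering bound, is where the main work lies; the strong $(\ln n)^{-100r}$ decay in $(GM2)_k$ is precisely what makes both the clustering and the leftover cross-scale configurations negligible.
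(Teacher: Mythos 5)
Your overall strategy is the paper's: part (a) is the union bound over $r$-tuples split according to $\Sep_n$, pushed to dyadic scales by the monotonicity hypothesis and closed with the first Borel--Cantelli lemma; part (b) is a second-moment argument on counts of well-separated tuples, with $(GM1)_k,(GM2)_k$ for $r\le k\le 2r$ controlling the same-scale pair sum and $(GM3)_r$ the cross-scale correlations (the paper runs Bonferroni on the events $\cD_m$ and takes a weak $L^2$ limit of $Z_n/\E Z_n$ rather than quoting Kochen--Stone, but the computation is the same). Part (a) is essentially complete. In part (b) there are two genuine gaps.

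First, the obstacle you flag at the end is an artifact of your definition of $W_j$ and is removed by the right choice of blocks, which is what the paper does. Because you sum over all separated tuples in $(0,2^j]$, a scale-$j$ tuple may start as early as $l_1\approx\hat{s}(2^j)$ and interleave with the scale-$i$ block; worse, $(GM3)_r$ cannot even be invoked for such tuples, since its hypothesis requires $2^i<k_\alpha\le 2^{i+1}$ and $2^j<l_\beta\le 2^{j+1}$. The paper instead restricts to the dyadic block: $\cU_m$ consists of $\wSep$-separated tuples with $2^m<k_1<\dots<k_r\le 2^{m+1}$. Then the gap condition is automatic: for $m'-m\ge b$ one has $l_1-k_r>2^{m'}-2^{m+1}\ge 2^{m'}\left(1-2^{-b+1}\right)$, while $\hat{s}(2^{m'+1})<2^{m'+1}(1-q)/(2r)\le 2^{m'}(1-q)$, so $l_1-k_r\ge\hat{s}(2^{m'+1})$ once $2^{-b+1}\le q$. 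The restriction costs nothing in the lower bound, since $\hat{s}(n)<n(1-q)/(2r)$ still leaves $\gtrsim (qn)^r/(2^r r!)$ separated tuples in a half-length window (this is \eqref{WellSepHat}). So no ``main work'' remains here.

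Second, your Kochen--Stone step silently assumes $\Prob(B_j)\asymp\E W_j$. This holds when $2^{j}\fv(\rho_{2^{j}})\to0$ (condition \eqref{SmallTargets} of the paper), because then the pair sum $J_m$ is of lower order than $I_m=\E W_j$ and Bonferroni gives $\Prob(B_j)\ge I_m-J_m\gtrsim I_m$. But $\bS_r=\infty$ does not imply this (no monotonicity of $\fv(\rho_n)$ is assumed in (b)), and when $\E W_j\to\infty$ the cross-scale bound $\E(W_iW_j)\lesssim\E W_i\,\E W_j$ no longer converts into $\Prob(B_i\cap B_j)\lesssim\Prob(B_i)\Prob(B_j)$, so the Kochen--Stone denominator is not controlled by the numerator. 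The paper devotes the end of Appendix~\ref{appmbc} to exactly this case: it thins each $E^k_{\rho_n}$ by an independent coin of bias $\nu_n$ chosen so that $\tilde\sigma(\rho_n)=\nu_n\sigma(\rho_n)$ satisfies both $n\tilde\sigma(\rho_n)\to0$ and $\sum_j2^{rj}\tilde\sigma(\rho_{2^j})^r=\infty$, applies the argument to the thinned events, and notes that $r$-fold occurrence of the thinned events forces $r$-fold occurrence of the originals. Without some such device your proof of (b) covers only the case $n\fv(\rho_n)\to0$. (A minor further point for the same-scale moments: the clustered configurations contribute additive errors of size $(\ln n)^{-10}$ rather than being absorbed multiplicatively into $\E W_j+(\E W_j)^2$; these must be carried along as in \eqref{Am}--\eqref{AsymInd} and disposed of only at the end using $\sum_j\E W_j=\infty$.)
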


The proof  of Theorem~\ref{GMultiBCN} proceeds in a similar fashion to the proof of Theorem~2.4 in \cite{DFL22} with slight modifications. For the reader's convenience we include the argument in Appendix~\ref{appmbc}.

\begin{Rem}
The mixing conditions $(GM1)_{r}$ and $(GM3)_{r}$ in our context are significantly weaker than $(M1)_{r}$ and $(M3)_{r}$ presented in \cite{DFL22}. Condition $(GM1)_r$ asserts that the probability $\Prob\left(\bigcap_{j=1}^r E^{k_j}_{\rho_n}\right)$ can be bounded by a multiple of $\sigma(\rho_n)^r,$ whereas the condition $(M1)_r$ in \cite{DFL22} requires that $\Prob\left(\bigcap_{j=1}^r E^{k_j}_{\rho_n}\right)$ should approximate $\sigma(\rho_n)^r$ for sufficiently large $n.$ We extend a similar generalization to condition $(GM3)_r$ compared to condition $(M3)_r.$  For any given system $(X,\mathcal{B},\mu,f),$ $(GM1)_{r}$ and $(GM3)_{r}$ are applicable not only to smooth map $f$ with a smooth measure $\mu,$ but also to some non-smooth systems with absolutely continuous invariant measures.
\end{Rem}

\begin{Rem}[About notations]
\begin{itemize}
\item[(1)] Throughout this paper, we use $C$  to denote a constant depending on the fixed number $r$ of the occurring events,
which may change from line to line but remains independent of the  time scale $n,$ the events $E_{\rho}$ and $\bfE_{\rho},$ the order of iteration of $f,$ etc.

\item[(2)] For functions $\alpha$ and $\beta$ defined on $\mathbb{N}$ or $\mathbb{R},$ and  taking values in a normed space, we write $\beta=O(\alpha)$ if there exists some constant $C>0$ such that $\|\beta(x)\|\leq C\|\alpha(x)\|$ for any $x.$

\item[(3)] Let $(X,\mathcal{B},\mu)$ be a measure space. For a measurable function $\phi:X^k\rightarrow\mathbb{R},$ $k\in\mathbb{Z}_+,$ we denote the integral of $\phi$ by
$$\mathbb{E}\phi=\int_{X^k} \phi(x_1,\cdots,x_k)\rd\mu(x_1)\cdots \rd\mu(x_k).$$
\end{itemize}
\end{Rem}

\subsection{Multiple exponentially mixing systems.}\label{MEMS}
We present how to obtain conditions $(GM1)_r$ $(GM2)_r$ and $(GM3)_r$ by imposing specific conditions on systems and targets. Let $(X,\mathcal{B},\mu,f)$ be a measure preserving system. To deal with multiple hitting and recurrence, we consider two families of sets:
$$E_\rho \subseteq X,\,\,\,{\bfE}_{\rho}\subseteq X\times X,\,\,\,\rho \in \R_+.$$
We take $$E_\rho^k=f^{-k}E_\rho,\quad{\bfE}^k_{\rho}=\{x: (x, f^kx)\in{\bfE}_{\rho}\}$$ and
$$\fv(\rho)=\mu(E_\rho),\quad\bar{\fv}(\rho)
=(\mu\times\mu)({\bfE}_{\rho}).$$

From now on we will always assume that if $\rho'\leq \rho$, then
$$E_{\rho'}\subset E_\rho, \quad \bfE_{\rho'}\subset \bfE_\rho.$$

The multiple exponential mixing properties of the dynamical system $(X,\mathcal{B},\mu,f)$ and the regularity and shrinking  conditions outlined below yield $(GM1)_r$, $(GM2)_r$ and $(GM3)_r.$

\begin{Def}[Generalized $(r+1)$-fold exponentially mixing systems]\label{def.mixing}
Let $\mathbb{B}$ be  a space of real
valued functions  defined over $X^{r+1}$, with a norm $\|\cdot\|_\mathbb{B}$. For $r \geq 1$, we say that $(X,\mathcal{B},\mu,f,\mathbb{B})$ is a generalized $(r+1)$-fold exponentially mixing system,  if there exist constants $C>0, L>0$ and $\theta<1$ such that
\begin{itemize}
\item[(Prod)]$\displaystyle{ \|A_1 A_2\|_\mathbb{B} \leq C \|A_1\|_\mathbb{B} \|A_2\|_\mathbb{B},}$
\item[(Gr)]$\displaystyle{  \|A\circ (f^{k_0},\ldots,f^{k_{r}}) \|_\mathbb{B} \leq C L^{\sum_{i=0}^{r} k_i} \|A\|_\mathbb{B},}$
\item[$({\rm GEM})_r$] If $0=k_0\leq k_1\leq \ldots \leq k_r$ are such that  $\forall i\in [0,r-1], k_{i+1}-k_i \geq m$, then
    $$C^{-1}\left(\int_{X^{r+1}}  A(x_0,\cdots,x_r)\rd\mu(x_0)\cdots \rd\mu(x_r)-\theta^m \left\|A\right\|_{\mathbb{B}}\right)\leq$$
 $$\int_{X} A(x,f^{k_1}x,\cdots,f^{k_r}x) \rd\mu(x)\leq C\left(\int_{X^{r+1}}  A(x_0,\cdots,x_r)\rd\mu(x_0)\cdots \rd\mu(x_r)+ \theta^m \left\|A\right\|_{\mathbb{B}}\right).
$$

\end{itemize}
\end{Def}

Given a system $(X,\mathcal{B},\mu,f,\mathbb{B}),$ we introduce the concept of simple and composite admissible targets.

\begin{Def}[Simple admissible targets]\label{def.targets} Let $\{E_\rho\}_{\rho\in \mathbb{R}_+}$ be a collection of sets in $X$
 for which there are positive constants $\tau,\,C$ and $\eta$ such that for all
  sufficiently small $\rho>0:$
\begin{itemize}
\item[(Appr)] There are  functions $A_\rho^+, A_\rho^-:X\rightarrow \mathbb{R}$ such that $A_\rho^\pm\in \mathbb{B}$ and
\begin{itemize}
\item[(i)] $\|A_\rho^\pm\|_\infty\leq 2,$
 $\|A_\rho^\pm\|_\mathbb{B}\leq \rho^{-\tau};$
\item[(ii)] $A_\rho^-\leq 1_{E_\rho}\leq A_\rho^+;$
\item[(iii)] $\mathbb{E}A_\rho^+-\mathbb{E}A_\rho^-\leq  C\fv(\rho)^{1+\eta}.$
\end{itemize}
\end{itemize}

 Let $\{\rho_n\}$ be a decreasing  sequence of positive numbers.
 We say that the sequence  $\{E_{\rho_n}\}$ is a simple  admissible sequence of targets for $(X,\mathcal{B},\mu,f,\mathbb{B})$ if  there exist constants $u>u_0>0$  such that
\begin{equation}\label{rhobound}\tag{Poly}
\exists n_0:\;\forall n\geq n_0,\,\,\,\rho_n \geq n^{-u}, \,\,\, n^{-u}\leq\sigma(\rho_n)\leq n^{-u_0};\end{equation}
and \begin{equation} \tag{Mov} \label{eq.MOV}
 \forall R,\, L\,  \exists C>0:\;
\forall k \in (0,R \ln n),  \;\;\; \mu(E_{\rho_n} \cap E_{\rho_n}^k)\leq C \fv(\rho_n) (\ln n)^{-L}.\end{equation}
\end{Def}

\begin{Def}[Composite admissible targets] \label{def.recurrent.targets}
Let $\{\bfE_\rho\}_{\rho\in \mathbb{R}_+}$ be a decreasing collection of sets in $X\times X$ satisfying the following conditions for some positive constants $\tau, C, \eta,$ and for all
  sufficiently small $\rho>0,$

\begin{itemize}
\item[${\rm(\overline{Appr})}$] There are  functions $\bar{A}_\rho^+, \bar{A}_\rho^-:X\times X\rightarrow \mathbb{R}$ such that $\bar{A}_\rho^\pm\in \mathbb{B}$ and
\begin{itemize}
\item[(i)] $\|\bar{A}_\rho^\pm\|_\infty\leq 2,$ $\|\bar{A}_\rho^\pm\|_\mathbb{B}\leq  \rho^{-\tau};$
\item[(ii)] $\bar{A}_\rho^-\leq 1_{{\bfE}_{\rho}}\leq \bar{A}_\rho^+;$
\item[(iii)] $\DS\int\prod_{i=1}^r\left(\int \brA^-(x_0,x_i) \rd\mu(x_i)\right) \rd\mu(x_0)\geq C^{-1}\bsigma(\rho)^r;$
\item[(iv)] $\forall x,$ $\DS\int\bar{A}_\rho^+(x,y)\rd\mu(y)\leq C\bsigma(\rho)$ and $\forall y,$ $\DS\int\bar{A}_\rho^+(x,y)\rd\mu(x)\leq C \bsigma(\rho);$
\item[(v)] $\DS\int\bar{A}_\rho^+(x,f^kx)\rd\mu(x)\leq C\mu\left({\bfE}_{a_0\rho}^k\right),$ $k\in\mathbb{Z}_+$ for some constant $a_0>0.$
\end{itemize}
\end{itemize}

Let $\{\rho_n\}$ be a decreasing  sequence of positive numbers. The sequence $\{\bfE_{\rho_n}\}$ is said to be composite admissible  if there exist constants $u>u_0>0$ such that
\begin{equation}\label{brhobound}\tag{$\overline{\mathrm{Poly}}$}
\exists n_0:\,\,\,\forall n\geq n_0,\,\,\,\rho_n \geq n^{-u}, \,\, n^{-u}\leq\bsigma(\rho_n)\leq n^{-u_0};
\end{equation}
there is a constant $a>0$ such that for any $k_1<k_2$
 \begin{equation}\label{eq.bSUB} \tag{${\rm \overline{Sub}}$} \bfE_{\rho}^{k_1}\cap\bfE_{\rho}^{k_2}\subset f^{-k_1}\bfE_{a\rho}^{k_2-k_1};\end{equation}
and
   \begin{equation}\label{eq.bMOV} \tag{${\rm \overline{Mov}}$}   \forall L\,\exists n_0: \forall n\geq n_0\,\,\,\text{and}\,\,\,
  k \neq 0,\quad \mu(\bfE^k_{a_0a\rho_n})\leq C (\ln n)^{-L},
 \end{equation}
where the constants $a_0$ and $a$ appear in ${\rm (\overline{Sub})}$ and ${\rm(\overline{Appr})(v)}$ respectively.
\end{Def}

\begin{Rem}
\begin{itemize}
\item[(i)] We introduce ${\rm(\overline{Appr})(iii)}$ and ${\rm(\overline{Appr})(iv)}$ due to the absence of
\bea\label{CTLow}\forall x,\,\,\int1_{{\bfE}_{\rho}}(x,y)\rd\mu(y)\geq C^{-1}\bsigma(\rho)\eea
in certain non-smooth systems. For instance, in dispersing billiards~(refer to Subsection \ref{BidRet}), we observe $$\int1_{{\bfE}_{\rho}}(x,y)\rd\mu(y)=O\left(\rho\sin\rho\cos\varphi\right)\,\,
\text{for}\,\,x=(r,\varphi)\,\,\text{and}\,\,x\notin\partial\mathcal{M},$$
while $$\bsigma(\rho)=O\left(\rho\sin\rho\right).$$
The approach of $x$ to $\partial\mathcal{M}$ results in $\cos\varphi$ approaching $0,$ invalidating \eqref{CTLow}.

\item[(ii)] Using $({\rm GEM})_1$ and ${\rm(\overline{Appr})(i)-(iv)},$ we have
$$\int\bar{A}_\rho^+(x,f^kx)\rd\mu(x)\leq C\left(\int A_\rho^+(x,y)\rd\mu(x)\rd\mu(y)+ \theta^k\left\|\bar{A}_\rho^+\right\|_{\mathbb{B}}\right)\leq C^2\bsigma({\rho})+C\theta^k\rho^{-\tau},$$
$$\mu\left({\bfE}_{\rho}^k\right)\geq C^{-1}\left(\int A_\rho^-(x,y)\rd\mu(x)\rd\mu(y)- \theta^k\left\|\bar{A}_\rho^-\right\|_{\mathbb{B}}\right)\geq C^{-2}\bsigma({\rho})-C^{-1}\theta^k\rho^{-\tau}.$$
\end{itemize}
It follows that for large constant $\brC$ and $k>\left(\ln\frac{\brC C^{-2}+C^2}{\brC C^{-1}+C}+\ln\bsigma(\rho)+\tau\ln\rho\right)/\ln\theta,$
$$\int\bar{A}_\rho^+(x,f^kx)\rd\mu(x)\leq \brC\mu\left({\bfE}_{\rho}^k\right).$$
Therefore, we propose ${\rm(\overline{Appr})(v)}$ to control small $k.$
\end{Rem}

Given a sequence $\{\rho_n\}_{n=1}^\infty$, we recall that $N^n_{\rho_n}(w)$  represents the number of times $k\leq n$ such that $E^k_{\rho_n}$ or $\bfE^k_{\rho_n}$ occurs.
Our goal is to establish conditions for the system $(X,\mathcal{B},\mu,f)$ and the targets $\left\{E_{\rho_{n}}^k\right\}_{(n,k) \in \N^2; 1\leq k \leq 2n}$ or $\left\{\bfE_{\rho_{n}}^k\right\}_{(n,k) \in \N^2; 1\leq k \leq 2n}$ that ensure the validity of the dichotomy stated in Theorem \ref{GMultiBCN} for the number of hits $N^n_{\rho_n}(w)$. To achieve this,  we take
$$ \bS_r=\sum_{j=1}^\infty 2^{rj} \bv_j,$$
where $\bv_j=\fv(\rho_{2^j})^r$ when considering   the targets $\left\{E_{\rho_{n}}^k\right\}_{(n,k) \in \N^2; 1\leq k \leq 2n}$ and ${\bv}_j=\bar\fv(\rho_{2^j})^r$  when considering  the targets  $\left\{\bfE_{\rho_{n}}^k\right\}_{(n,k) \in \N^2; 1\leq k \leq 2n}.$

\begin{Thm} \label{theo.mixing} Assume a system $(X,\mathcal{B},\mu,f,\mathbb{B})$ is generalized {$(2r+1)$}-fold exponentially mixing. Then
\begin{itemize}
\item[(a)] If $\{E_{\rho_n}\}$ is a sequence of simple admissible targets as in Definition \ref{def.targets},  then  the events of the family $\{ E_{\rho_{n}}^k \}_{(n,k) \in \N^2; 1\leq k \leq 2n}$ satisfy $(GM1)_k,$   $(GM2)_k$ and $(GM3)_k$ for $k=1,\cdots,2r+1.$

\item[(b)] If $\{\bfE_{\rho_n}\}$  is a sequence of composite admissible targets as in Definition \ref{def.recurrent.targets}, then  the events of the family $\{ \bfE_{\rho_{n}}^k \}_{(n,k) \in \N^2; 1\leq k \leq 2n}$ satisfy $(GM1)_k,$   $(GM2)_k$ and $(GM3)_k$ for $k=1,\cdots,2r.$
    \end{itemize}
\end{Thm}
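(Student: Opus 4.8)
The plan is to sandwich each indicator between the approximating functions and reduce every joint probability to an integral of a product of $\mathbb{B}$-functions, to which the multiple-mixing inequality $(\mathrm{GEM})$ applies. Concretely, for simple targets one writes $\mu(\bigcap_{j=1}^k E_{\rho_n}^{k_j})=\int\prod_{j=1}^k 1_{E_{\rho_n}}(f^{k_j}x)\,\rd\mu(x)$, sandwiches it by $\int\prod_j A_{\rho_n}^{\pm}(f^{k_j}x)\,\rd\mu$, shifts the window so the smallest time is $0$, and applies $(\mathrm{GEM})_{k-1}$ to the single-variable product $\prod_j A_{\rho_n}^{\pm}$. Since this observable carries no internal iterate, no growth factor from (Gr) is incurred and the error is a clean $\theta^{\,\mathrm{sep}}\|\cdot\|_{\mathbb B}$. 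For composite targets the base point is shared by all factors, so one instead keeps $x_0=x$ as the time-$0$ coordinate and applies $(\mathrm{GEM})_r$ to $\prod_\alpha \bar A_{\rho_n}^{\pm}(x_0,x_\alpha)$ on $X^{r+1}$. This shared coordinate is precisely why $\Sep$ is defined with $k_0=0$, and why the generalized $(2r+1)$-fold hypothesis permits up to $2r+1$ simple events but only $2r$ composite ones.

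For $(GM1)_k$ and $(GM3)_k$ the argument is then immediate. The main term factorizes into $(\mathbb E A_{\rho_n}^{\pm})^k$, which by (Appr)(iii) (resp. $(\overline{\mathrm{Appr}})$(iii),(iv)) is comparable to $\sigma(\rho_n)^k$ (resp. to $\bsigma(\rho_n)^r$, and to the cross-scale product $\bsigma(\rho_{2^i})^r\bsigma(\rho_{2^j})^r$ for $(GM3)$). The error $\theta^{\,\mathrm{sep}}\|\cdot\|_{\mathbb B}\le C\theta^{\,\mathrm{sep}}\rho_n^{-k\tau}$ is negligible because (Poly) keeps the norm polynomial in $n$ while the separation — $s(n)$ for $(GM1)$ and $\hat s(n)\ge \varepsilon n$ for $(GM3)$ — forces $\theta^{\,\mathrm{sep}}$ below any power of $n$; with $s(n)\asymp(\ln n)^2$ one has $\theta^{s(n)}=n^{-c\ln n}$, which beats every polynomial, so both estimates hold with the stated two-sided constants.

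The real work is $(GM2)_k$, where $\Sep=m<k$ means the times split into clusters separated by gaps $\ge s(n)$, each cluster internally tight. The plan is to factor the probability across clusters via $(\mathrm{GEM})$, bound each cluster by $C\sigma(\rho_n)$, and extract a $(\ln n)^{-L}$ penalty from at least one close pair inside a non-singleton cluster using (Mov) (simple case) or $(\overline{\mathrm{Sub}})$ together with $(\overline{\mathrm{Mov}})$ (composite case, where a close pair collapses into a single recurrence set $\bfE_{a\rho_n}^{k_2-k_1}$ whose measure is controlled). Writing $p\ (\ge m)$ for the number of clusters, one combines $\sigma(\rho_n)^{p}\le\sigma(\rho_n)^{m}$ with either the genuine (Mov) penalty or, when $p>m$, the slack $\sigma(\rho_n)\le n^{-u_0}\le(\ln n)^{-100k}$ from (Poly); taking $L=100k$ then yields $\mu(\bigcap_j E_{\rho_n}^{k_j})\le C\sigma(\rho_n)^m(\ln n)^{-100k}$.

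The main obstacle is the tension, special to $(GM2)$, between the growth factor $L^{\sum k_i}$ from (Gr) — incurred whenever several tight times are bundled into one coordinate — and the mixing decay $\theta^{\,\mathrm{sep}}$, compounded by the need to transfer the indicator-level (Mov) penalty through the $\mathbb B$-approximation that $(\mathrm{GEM})$ requires. Clustering only at the coarse scale $s(n)\asymp(\ln n)^2$ fails, since a cluster window can exceed an inter-cluster gap and $(\theta L^{k})^{s(n)}$ need not be small. I would resolve this with a two-scale refinement: split further at a fine scale $R\ln n$, with $R$ a large fixed constant matching the range of (Mov). Fine clusters then have windows of length $O(\ln n)$, so their bundled $\mathbb B$-norms grow only polynomially in $n$ and are dominated by the separation decay $\theta^{R\ln n}=n^{R\ln\theta}$ once $R$ is large; within a fine cluster the close gaps lie in $(0,R\ln n)$, precisely where (Mov) supplies the $(\ln n)^{-L}$ penalty directly on indicators. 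For composite targets this step is cleaner, as $(\overline{\mathrm{Sub}})$ turns a tight pair into one set before any approximation, so $(\overline{\mathrm{Mov}})$ applies without passing a penalty through $(\mathrm{GEM})$. Assembling the cluster bounds and summing the finitely many negligible error terms completes both (a) and (b).
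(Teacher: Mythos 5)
Your overall strategy --- sandwiching the indicators by $A_\rho^{\pm}$, $\bar A_\rho^{\pm}$, feeding the resulting products into $({\rm GEM})$, explaining the count $2r+1$ versus $2r$ by the extra shared coordinate $x_0$, and handling $(GM2)$ by clustering plus $({\rm Mov})$, respectively $(\overline{\rm Sub})$ together with $(\overline{\rm Appr})$(v) and $(\overline{\rm Mov})$ --- is the paper's, and your treatment of $(GM1)$ is correct. However, your resolution of the $(GM2)$ obstacle, which you rightly identify as the main difficulty, fails quantitatively. If you split at a single fine scale $R\ln n$, a fine cluster can contain up to $r$ times with consecutive gaps just below $R\ln n$, so its window has length of order $rR\ln n$ and the bundled observable has $\mathbb{B}$-norm of order $L^{rR\ln n}=n^{rR\ln L}$ by $({\rm Gr})$, while the separation decay you set against it is only $\theta^{R\ln n}=n^{-R|\ln\theta|}$. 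Both exponents are linear in $R$, so ``taking $R$ large'' never makes the product small unless $|\ln\theta|>r\ln L$, which you cannot assume. The paper resolves this with two genuinely separated scales: it first proves $(GM1)_r$ under the weaker separation $\sqrt{R}\ln n$, which disposes of any gap lying in $[\sqrt{R}\ln n,\,R\ln n)$ (the resulting bound $\sigma(\rho_n)^r\leq\sigma(\rho_n)^m n^{-u_0(r-m)}$ already beats $(\ln n)^{-100r}$ by $({\rm Poly})$), and then reduces the remaining case to a \emph{single} close pair with gap $<\sqrt{R}\ln n$ --- bounding all other close indicators by $1$ --- while every surviving inter-event gap is $\geq R\ln n$. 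The error becomes $L^{\sqrt{R}\ln n}\theta^{R\ln n}\rho_n^{-r\tau}=n^{\sqrt{R}\ln L-R|\ln\theta|+ru\tau}$, which does vanish for large $R$. You are missing both the $\sqrt{R}$-versus-$R$ scale separation and the reduction to one close pair; as written, your step would not close.

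A second, smaller gap: your claim that in $(GM3)_k$ no growth factor from $({\rm Gr})$ is incurred and the error is a clean $\theta^{\hat{s}}\rho^{-k\tau}$ is not correct. The first block of $r$ events at scale $2^i$ is only internally separated by $\hat{s}(2^{i+1})=\varepsilon 2^{i+1}$, which is useless against the norm $\rho_{2^j}^{-r\tau}\leq 2^{jur\tau}$ when $j\gg i$, so one must bundle that block into a single observable; by $({\rm Gr})$ this costs a factor $L^{r2^{i+1}}$. That exponential factor is beaten by $\theta^{\varepsilon 2^{j+1}}$ only because the hypothesis $j-i\geq b$ allows one to write $L^{r2^{i+1}}\leq\left(L^{r2^{-b+1}}\right)^{2^j}$ and to choose $b$ large depending on $\varepsilon$, $L$, $r$ and $\theta$. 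This is precisely the role of the constant $b$ in $(GM3)_r$, which your argument never invokes.
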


Hence, Theorem \ref{GMultiBCN} and Theorem \ref{theo.mixing} imply
\begin{Cor} \label{cor.mixing} If the system $(X,\mathcal{B},\mu,f,\mathbb{B})$ is generalized $(2r+1)$-fold exponentially mixing, and if
 $\{E_{\rho_n}\}$ (or $\{\bfE_{\rho_n}\}$) are as in Definition \ref{def.targets} (or Definition \ref{def.recurrent.targets}), then
 \begin{itemize}
\item[(a)] If $\bS_r<\infty,$   $\mu(H_r)=0.$
\item[(b)] If $\bS_r=\infty,$  $\mu(H_r)>0.$
\end{itemize}
\end{Cor}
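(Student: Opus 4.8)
The proof of Corollary~\ref{cor.mixing} is almost immediate once Theorems~\ref{GMultiBCN} and~\ref{theo.mixing} are in hand, so the plan is to verify that the hypotheses of Theorem~\ref{GMultiBCN} follow from those of Theorem~\ref{theo.mixing} and then invoke the dichotomy directly. First I would observe that generalized $(2r+1)$-fold exponential mixing together with the admissibility of the targets supplies, via Theorem~\ref{theo.mixing}, the mixing conditions $(GM1)_k$, $(GM2)_k$ and $(GM3)_k$ for the required range of indices $k$. In the simple-target case these hold for $k=1,\dots,2r+1$, while in the composite-target case they hold for $k=1,\dots,2r$; in either situation the range covers $k=r,r+1,\dots,2r$, which is exactly what part~(b) of Theorem~\ref{GMultiBCN} demands, and it covers $k=r$, which is what part~(a) demands.

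For part~(a), where $\bS_r<\infty$, I would check the extra monotonicity hypotheses of Theorem~\ref{GMultiBCN}(a): namely $E_{\rho_{n_1}}^k\subset E_{\rho_{n_2}}^k$ and $\fv(\rho_{n_1})\leq\fv(\rho_{n_2})$ for $n_1\geq n_2$. These are inherited from the standing assumption that the sequence $\{\rho_n\}$ is decreasing together with the monotonicity $E_{\rho'}\subset E_\rho$ (respectively $\bfE_{\rho'}\subset\bfE_\rho$) whenever $\rho'\leq\rho$, which forces $\fv$ (respectively $\bsigma$) to be monotone in $\rho$; since $n_1\geq n_2$ gives $\rho_{n_1}\leq\rho_{n_2}$, the set inclusion and the inequality on $\fv$ follow. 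With $(GM1)_r$, $(GM2)_r$ and these monotonicity properties in place, Theorem~\ref{GMultiBCN}(a) yields $\mu(H_r)=0$. Here one should note that $\bS_r$ as defined in the statement of Corollary~\ref{cor.mixing} coincides with the quantity $\bS_r$ appearing in Theorem~\ref{GMultiBCN} once $\fv$ is replaced by $\bsigma$ in the composite case, so the convergence hypothesis transfers verbatim.

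For part~(b), where $\bS_r=\infty$, I would simply feed the conclusions of Theorem~\ref{theo.mixing} into Theorem~\ref{GMultiBCN}(b): the latter requires $(GM1)_k$ and $(GM2)_k$ for $k=r,\dots,2r$ and $(GM3)_r$, all of which are supplied, and it then gives $\mu(H_r)>0$. The only point requiring attention is that Theorem~\ref{GMultiBCN}(b) does not need the monotonicity hypotheses of part~(a), so no further verification is necessary beyond matching indices.

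The main obstacle in this corollary is essentially bookkeeping rather than mathematics: one must confirm that the index ranges produced by Theorem~\ref{theo.mixing} (up to $2r+1$ for simple targets, up to $2r$ for composite targets) genuinely contain the range $\{r,\dots,2r\}$ needed by Theorem~\ref{GMultiBCN}(b), and that the definition of $\bS_r$ is being used with the correct choice of $\fv$ versus $\bsigma$ in the two cases. Once these identifications are made explicit, the statement is a direct composition of the two preceding theorems, and I would write the proof as two short paragraphs, one for each alternative, each ending with the appropriate application of Theorem~\ref{GMultiBCN}.
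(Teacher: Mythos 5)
Your proposal is correct and follows exactly the route the paper takes: the paper gives no separate proof of Corollary~\ref{cor.mixing}, simply noting that it follows by combining Theorem~\ref{GMultiBCN} with Theorem~\ref{theo.mixing}, and the bookkeeping you supply (the monotonicity of the targets and of $\fv$ coming from the decreasing sequence $\{\rho_n\}$ and the standing inclusion $E_{\rho'}\subset E_\rho$, plus the index ranges $k=1,\dots,2r+1$ and $k=1,\dots,2r$ covering $\{r,\dots,2r\}$) is precisely what makes that combination legitimate.
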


The proof of Theorem \ref{theo.mixing} is analogous to that of Theorem 3.7 in \cite{DFL22}, but with small modifications. Therefore we include it in Appendix \ref{prthmmix} for completeness.

\begin{Prop}\label{EssInv}
Let $(X,\mathcal{B},\mu,f)$ be a measure preserving system and $\{E_{\rho_n}\}$ a sequence of measurable subset of $X.$ We denote
$E_{\rho_n}^k=f^{-k}E_{\rho_n}.$ If $E_{\rho_{n_1}}\subset E_{\rho_{n_2}}$  for $n_1\geq n_2,$ and $\lim_{n\rightarrow\infty}\mu(E_{\rho_n})=0,$ then $\mu(H_r\Delta f^{-1}H_r)=0.$
\end{Prop}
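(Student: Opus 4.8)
The plan is to write $H_r=\limsup_{n}A_n$ with $A_n=\{\omega\in X:\,N^n_{\rho_n}(\omega)\ge r\}$, and to reduce the symmetric-difference estimate to a single one-sided inclusion. Since $f^{-1}$ commutes with countable unions and intersections, $f^{-1}H_r=\limsup_n f^{-1}A_n$. Because $f$ preserves $\mu$ we have $\mu(f^{-1}H_r)=\mu(H_r)$; hence, once I establish $\mu(H_r\setminus f^{-1}H_r)=0$, the elementary identity (valid since all measures are finite)
$$\mu(f^{-1}H_r\setminus H_r)=\mu(f^{-1}H_r)-\mu(H_r\cap f^{-1}H_r)=\mu(H_r)-\mu(H_r\cap f^{-1}H_r)=\mu(H_r\setminus f^{-1}H_r)=0$$
yields the other half for free, and therefore $\mu(H_r\Delta f^{-1}H_r)=0$.

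It remains to prove $\mu(H_r\setminus f^{-1}H_r)=0$. First I would record the standard $\limsup$ inclusion: if $\omega\in H_r\setminus f^{-1}H_r$, then $\omega\in A_n$ for infinitely many $n$ while $\omega\notin f^{-1}A_n$ for all large $n$, so $\omega\in A_n\setminus f^{-1}A_n$ for infinitely many $n$; thus $H_r\setminus f^{-1}H_r\subseteq\limsup_n(A_n\setminus f^{-1}A_n)$. Next I compare the two counting functions. Since $f^{-1}A_n=\{\omega:N^n_{\rho_n}(f\omega)\ge r\}$ and a reindexing gives $N^n_{\rho_n}(f\omega)=\sharp\{2\le k\le n+1:\omega\in E^k_{\rho_n}\}$, the two counts differ only at the endpoints $k=1$ and $k=n+1$:
$$N^n_{\rho_n}(\omega)-N^n_{\rho_n}(f\omega)=1_{E^1_{\rho_n}}(\omega)-1_{E^{n+1}_{\rho_n}}(\omega).$$
On $A_n\setminus f^{-1}A_n$ the left-hand side is $\ge 1$, which forces $1_{E^1_{\rho_n}}(\omega)=1$; hence $A_n\setminus f^{-1}A_n\subseteq E^1_{\rho_n}=f^{-1}E_{\rho_n}$, and consequently $H_r\setminus f^{-1}H_r\subseteq\limsup_n f^{-1}E_{\rho_n}$.

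Finally I would show $\mu(\limsup_n f^{-1}E_{\rho_n})=0$ by exploiting the monotonicity hypothesis $E_{\rho_{n_1}}\subset E_{\rho_{n_2}}$ for $n_1\ge n_2$. For every fixed $M$ and every $n\ge M$ one has $E_{\rho_n}\subset E_{\rho_M}$, so $\bigcup_{n\ge M}f^{-1}E_{\rho_n}\subseteq f^{-1}E_{\rho_M}$, whence $\mu\big(\bigcup_{n\ge M}f^{-1}E_{\rho_n}\big)\le\mu(E_{\rho_M})$. Letting $M\to\infty$ and invoking $\lim_{n\to\infty}\mu(E_{\rho_n})=0$ gives $\mu(\limsup_n f^{-1}E_{\rho_n})\le\inf_M\mu(E_{\rho_M})=0$, which closes the argument.

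The main subtlety I expect is the \emph{asymmetry} between the two endpoints. The same computation only yields $f^{-1}A_n\setminus A_n\subseteq E^{n+1}_{\rho_n}=f^{-(n+1)}E_{\rho_n}$, and $\limsup_n f^{-(n+1)}E_{\rho_n}$ is a recurrence-type set whose measure need not tend to zero; so one cannot bound $\mu(f^{-1}H_r\setminus H_r)$ directly by this route. The crux of the proof is therefore to extract the genuinely null inclusion only at the $k=1$ endpoint—where the monotonicity of the targets collapses the $\limsup$ onto a single shrinking set—and to recover the $k=n+1$ side purely from measure preservation rather than from a direct estimate.
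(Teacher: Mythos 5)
Your proof is correct and follows essentially the same route as the paper: both arguments reduce the symmetric difference to the single inclusion $H_r\setminus f^{-1}H_r\subseteq\limsup_n f^{-1}E_{\rho_n}$ (the paper's set $Y$), kill that $\limsup$ using the monotonicity of the targets together with $\mu(E_{\rho_n})\to 0$, and recover the reverse half from $f$-invariance of $\mu$. Your version is merely more explicit about the endpoint bookkeeping $N^n_{\rho_n}(\omega)-N^n_{\rho_n}(f\omega)=1_{E^1_{\rho_n}}(\omega)-1_{E^{n+1}_{\rho_n}}(\omega)$ and the finite-measure identity, which the paper leaves implicit.
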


A direct corollary of Proposition~\ref{EssInv} is that $\mu(H_r)=0$ or $1$ when $\mu$ is an ergodic measure of $f.$ Hence we get
$\mu(H_r)=1$ under the hypothesis of Theorem~\ref{GMultiBCN}(b).

\begin{proof}[Proof of Proposition~\ref{EssInv}.]
Suppose $Y=\{E_{\rho_n}^1~\text{occur for infinitely many}~n\}.$ Recall that $H_r=\bigcap_{m=1}^\infty\bigcup_{n=m}^\infty\{\omega:N_{\rho_n}^n(\omega)\geq r\}.$ Then
\Bea
H_r\setminus Y&\subseteq&\left\{E_{\rho_n}^k ~\text{occur for at least}~r~\text{times for}~2\leq k\leq n,\,\,i.o.\right\}\\
&\subseteq&\left\{E_{\rho_n}^k ~\text{occur for at least}~r~\text{times for}~2\leq k\leq n+1,\,\,i.o.\right\}\\
&\subseteq&f^{-1}H_r,
\Eea
Because $\mu$ is $f$-invariant and $\mu(Y)=\lim_{n\rightarrow\infty}\mu(E_{\rho_n})=0$ by monotone convergence theorem, we get $\mu(H_r\Delta f^{-1}H_r)=0.$
\end{proof}

\section{ Multiple Logarithm Law for Dispersing Billiard Maps.}
\label{SSMultExpMixBil}
\subsection{Results.}
Let  $\left\{\mathcal{B}_i\right\}_{1\leq i\leq p}$ denote disjoint strictly convex domains on the two-dimensional torus  $\mathcal{T}^2.$ These domains have $C^3$ smooth boundaries with non-vanishing curvature. We consider the motion of a point particle moves freely inside $\mathcal{D}=\mathcal{T}^2\setminus\bigcup_{1\leq i\leq p}\mathcal{B}_i$ at unit speed, following the rule that ``the angle of incidence equals the angle of reflection''.
Suppose $\partial\mathcal{D}$ represents the boundary of the domain $\mathcal{D}.$ The phase space of
the motion  is defined as $\mathcal{M}=\partial\mathcal{D}\times[-\pi/2,\pi/2].$ In this space, we use standard coordinates  $(r,\varphi)$. Here is what these coordinates represent:
 \begin{itemize}
\item[$\bullet$] $r:$ The arc length parameter along the boundary $\partial\mathcal{D};$
\item[$\bullet$] $\varphi:$ The angle between the post-collisional vector $v$ and the inward unit normal vector $n$ at the reflection point $q\in\partial\mathcal{D}.$ Note that $<v,n>=\cos\varphi.$
\end{itemize}

The map $\mathcal{F}:\mathcal{M}\rightarrow \mathcal{M}$  induced by the motion of a billiard particle is commonly referred to as the collision map or billiard map. Importantly, the billiard map preserves a smooth measure given by $d\mu=c_\mu\cos\varphi dr d\varphi,$ where $c_\mu$ represents the normalizing factor. The billiard system $(\mathcal{M},\mathcal{B}(\mathcal{M}),\mu,\mathcal{F})$ is referred to as dispersing billiard or Sinai billiard, as introduced by Sinai~\cite{Sinai70}.

\begin{Thm} \label{BidMultiLog}
Let $(\mathcal{M},\mathcal{B}(\mathcal{M}),\mu,\mathcal{F})$ be a dispersing billiard. Suppose that $d_n^{(r)}(x,y)$  is the $r$-th minimum of $d(x,\mathcal{F}(y)),\cdots, d(x,\mathcal{F}^n (y))$ for $x,\,y\in \mathcal{M},$
 \begin{itemize}
 \item[(a)] For $\mu-$almost every $x,$
	$$\mu\left\{y\in\mathcal{M}:\,\, \limsup_{n\rightarrow \infty} \frac{\left|\ln d_n^{(r)}(x,y)\right|-\frac{1}{2}\ln n}{\ln \ln n}
        = \frac{1}{2r}\right\}=1;$$
 \item[(b)]
$$\mu\left\{x\in\mathcal{M}:\limsup_{n\rightarrow \infty} \frac{\left|\ln d_n^{(r)}(x,x)\right|-\frac{1}{2}\ln n}{\ln \ln n}
        = \frac{1}{2r}\right\}=1.$$
\end{itemize}
\end{Thm}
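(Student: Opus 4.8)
The plan is to deduce Theorem~\ref{BidMultiLog} from Corollary~\ref{cor.mixing} by showing that the dispersing billiard, equipped with an appropriate function space $\mathbb{B}$, is generalized $(2r+1)$-fold exponentially mixing, and that the relevant shrinking-target families are admissible in the senses of Definitions~\ref{def.targets} and~\ref{def.recurrent.targets}. For part (a) the events are \emph{hitting} events: fixing $x$, set $E_\rho = B(x,\rho)$ (a metric ball of radius $\rho$) and $E_\rho^k=\mathcal{F}^{-k}E_\rho$, so that $N^n_{\rho_n}\ge r$ exactly when $d_n^{(r)}(x,y)\le\rho_n$. For part~(b) the events are \emph{recurrence} events governed by the diagonal neighbourhoods $\bfE_\rho=\{(x,y):d(x,y)<\rho\}\subset\mathcal{M}\times\mathcal{M}$, whose composite admissibility I would verify using the computations already flagged in the Remark after Definition~\ref{def.recurrent.targets} (namely $\int 1_{\bfE_\rho}(x,y)\,\rd\mu(y)=O(\rho\sin\rho\cos\varphi)$ and $\bsigma(\rho)=O(\rho\sin\rho)$). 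The crux of both parts is then purely calibrational: one must choose the radius sequence $\rho_n$ so that the series $\bS_r=\sum_j 2^{rj}\sigma(\rho_{2^j})^r$ (respectively with $\bar\sigma$) sits precisely at the convergence/divergence threshold dictated by the target $\limsup = 1/(2r)$, and then run the Borel--Cantelli dichotomy of Corollary~\ref{cor.mixing} along a geometrically increasing subsequence, promoting the result from $n=2^j$ to all $n$ by monotonicity.

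The first main block of work is the mixing input. Since $\sigma(\rho)=\mu(E_\rho)\asymp\rho$ for balls but dispersing billiards are only exponentially mixing for suitably regular observables (not for Lipschitz functions in the metric norm), I would take $\mathbb{B}$ to be the space of \emph{dynamically H\"older continuous} functions on $\mathcal{M}^{r+1}$, using the norm flagged in the introduction, and invoke the known multiple exponential mixing of order $2r+1$ for Sinai billiards in that norm (Chernov-type estimates). With $\mathbb{B}$ fixed, the real labour is building the approximating observables $A_\rho^\pm$ (resp.\ $\bar{A}_\rho^\pm$) sandwiching the indicators of the targets: I must exhibit dynamically H\"older functions with $\|A_\rho^\pm\|_\infty\le 2$, polynomial norm bound $\|A_\rho^\pm\|_{\mathbb{B}}\le\rho^{-\tau}$, and the sharp $\mathbb{E}A_\rho^+-\mathbb{E}A_\rho^-\le C\sigma(\rho)^{1+\eta}$ gain in measure, which is the standard ``mollification of a ball'' construction but now adapted to the singularity structure of the billiard (stable/unstable cone fields and the boundary of $\mathcal{M}$).

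The second block is checking the remaining target conditions. For~(a) the \eqref{eq.MOV} (Mov) condition and the polynomial bound \eqref{rhobound} (Poly) are routine once $\rho_n$ is fixed; (Mov) amounts to saying that a ball and a short-time iterate of it have small overlap, which follows from the nonuniform hyperbolicity and the fact that preimages of balls become thin unstable strips. For~(b) the subtler points are $(\overline{\mathrm{Sub}})$, which I would derive from the (near-)invariance and triangle-inequality geometry of the diagonal neighbourhoods under the dynamics, and $(\overline{\mathrm{Mov}})$; condition $(\overline{\mathrm{Appr}})(\mathrm{v})$ is exactly the technical device introduced in the Remark to handle small $k$, and I would cite that computation directly.

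I expect the \textbf{main obstacle} to be the measure degeneracy near the boundary $\partial\mathcal{M}$, where $\cos\varphi\to 0$ so that the invariant density vanishes and the naive lower bound \eqref{CTLow} fails. This is precisely why Definition~\ref{def.recurrent.targets} replaces it by the integrated conditions $(\overline{\mathrm{Appr}})(\mathrm{iii})$ and $(\overline{\mathrm{Appr}})(\mathrm{iv})$; the delicate step is to show that after integrating $\prod_{i=1}^r\bigl(\int\bar{A}_\rho^-(x_0,x_i)\,\rd\mu(x_i)\bigr)$ against $\rd\mu(x_0)$, the vanishing of $\cos\varphi$ on a small boundary layer contributes negligibly and one still recovers a clean $C^{-1}\bsigma(\rho)^r$ lower bound. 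A secondary difficulty is that the mollifiers $A_\rho^\pm$ must respect the discontinuity set of the billiard map in order to remain in $\mathbb{B}$ with the claimed norm bound, so some care is needed to cut off near singularity curves without destroying the $\rho^{-\tau}$ estimate. Once these two points are settled, the passage from the $j$-indexed dichotomy to the stated almost-sure $\limsup$ is the standard argument: divergence of $\bS_r$ at exponents just below $1/(2r)$ forces $\mu(H_r)=1$ (hence, via ergodicity and Proposition~\ref{EssInv}, the liminf-type lower bound), while convergence just above forces $\mu(H_r)=0$, and the two together pin the $\limsup$ to $1/(2r)$.
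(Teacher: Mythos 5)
Your overall architecture matches the paper's: both parts are reduced to Corollary~\ref{cor.mixing} by establishing generalized multiple exponential mixing in the space of dynamically H\"older continuous functions, verifying simple/composite admissibility of the ball and diagonal targets, and calibrating $\rho_n=n^{-1/2}(\ln n)^{-\delta}$ so that $\bS_r$ switches between convergence and divergence at $\delta=1/(2r)$. You also correctly identify the $\cos\varphi$ degeneracy near $\partial\mathcal{M}$ as the reason for conditions $(\overline{\mathrm{Appr}})$(iii)--(v).

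The genuine gap is your treatment of \eqref{eq.MOV} and $(\overline{\mathrm{Mov}})$, which you dismiss as routine overlap estimates following from hyperbolicity; in the paper these are the technical heart of the proof. One needs $\mu(B(x,\rho_n)\cap\mathcal{F}^{-k}B(x,\rho_n))\le\mu(B(x,\rho_n))(\ln n)^{-A}$ for \emph{all} $0<k\le K\ln n$. For $k\ge\epsilon\ln n$ this already requires the quantitative Growth Lemma for unstable curves, not merely the heuristic that preimages of balls become thin strips; but the real issue is $k<\epsilon\ln n$, where hyperbolicity has produced no decorrelation and, near a low-period periodic point, the overlap could be of full relative measure. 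The paper resolves this by proving that $\mu$-a.e.\ $x$ is Diophantine in the sense of Definition~\ref{Diop} --- so the intersection is actually \emph{empty} for $k\le\epsilon|\ln\rho|$ --- via the return-time theorem of Saussol--Troubetzkoy--Vaienti (Proposition~\ref{prop1}), which in turn needs a Borel--Cantelli estimate on the approach of orbits to the homogeneity-strip boundaries $\mathbb{S}$, the derivative bound $\|D_z\mathcal{F}\|\le c|k|^2$ on $\mathcal{H}_k$, and positivity of the entropy $h_\mu(\mathcal{F},\mathbb{H})$. This is also precisely why part (a) holds only for a.e.\ $x$, a restriction your sketch leaves unexplained. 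The analogous condition $(\overline{\mathrm{Mov}})$ for part (b) is Proposition~\ref{SlowREc-Bid}, again proved by a two-regime argument (mixing for $k\ge B\ln|\ln\rho|$, a chain estimate inside $\overline{\mathcal{H}}_{n,l}$ for small $k$), not by a soft geometric observation. A secondary, repairable gap: Proposition~\ref{EssInv} is stated for targets of the form $f^{-k}E_\rho$ and does not apply to the composite recurrence events $\bfE_\rho^k$, so upgrading $\mu(H_r)>0$ to measure one in part (b) requires the separate essential-invariance argument for $R_{\rm{Bid}}$ that the paper supplies using the derivative bound on homogeneity strips.
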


\subsection{Exponentially mixing property of dispersing billiards.}
We review fundamental properties of dispersing billiards~\cite{ChernovMarkarian06} to establish $(r+1)$-fold exponentially mixing property for a  billiard system $(\mathcal{M},\mathcal{B}(\mathcal{M}),\mu,\mathcal{F})$ and any positive integer $r.$

The billiard map $\mathcal{F}$ is hyperbolic. Namely, there exists a family of $D\mathcal{F}-$invariant unstable cones $C_x^u\subseteq T_x\mathcal{M}$ and a family of $D\mathcal{F}-$invariant stable cones $C_x^s\subseteq T_x\mathcal{M}.$ These cones can be defined such that:
 \begin{itemize}
\item[$\bullet$] For any $(dr,d\varphi)\in C_x^u,$ we have $K_1\leq d\varphi/dr\leq K_2;$
\item[$\bullet$] For any $(dr,d\varphi)\in C_x^s,$ we have $-K_1\leq d\varphi/dr\leq -K_2.$
\end{itemize}
Here, $0<K_1<K_2<\infty$  are constants. We say that a smooth curve $\mathcal{W}\subset\mathcal{M}$ is unstable~(or stable) if at any $x\in\mathcal{W},$ the tangent space belongs to the unstable~(or stable) cones. The hyperbolicity of $\mathcal{F}$ means expansion~(or contraction) on unstable~(or stable) curves. More precisely, for any $x$ lying on an unstable~(or stable) curve in the set $\mathcal{W},$ there exist positive constants $C>0$ and $\Lambda>1$ such that for any positive integer $n$, we have: $$\|D_x\mathcal{F}^n(dx)\|/\|dx\|\geq C\Lambda^n\,\,\left(\text{or}\,\,\|D_x\mathcal{F}^{-n}(dx)\|/\|dx\|\geq C\Lambda^n\right)$$
where $dx$ belongs to the tangent space  $T_x\mathcal{W}.$

The billiard map $\mathcal{F}$ on $\mathcal{M}$ is not smooth. Denote the boundary of the collision space by $\mathcal{S}_0=\partial\mathcal{M}=\{\cos\varphi=0\}.$ The singularity sets for the maps $\mathcal{F}^n$ and  $\mathcal{F}^{-n}$ are given by $\mathcal{S}_n=\bigcup_{i=0}^n\mathcal{F}^{-i}\mathcal{S}_0$ and  $\mathcal{S}_{-n}=\bigcup_{i=0}^n\mathcal{F}^i\mathcal{S}_0,$ respectively. To control the distortions of $\mathcal{F},$  we partition the collision space $\mathcal{M}$ into homogeneity strips $\mathcal{H}_{\pm k},$ parallel to $\partial\mathcal{M}$ and accumulating near it:
\Bea
\mathcal{H}_k&=&\left\{(r,\varphi): \pi/2-k^{-2}<\varphi<\pi/2-(k+1)^{-2}\right\},\\
\mathcal{H}_{-k}&=&\left\{(r,\varphi): -\pi/2+(k+1)^{-2}<\varphi<-\pi/2+k^{-2}\right\},\\
\mathcal{H}_0&=&\left\{(r,\varphi): -\pi/2+k_0^{-2}<\varphi<\pi/2-k_0^{-2}\right\}.
\Eea Define the set of boundaries of homogeneity strips as $\mathbb{S}=\bigcup_{|k|\geq k_0}\partial\mathcal{H}_k.$ We then construct a new collision space $\mathcal{M}_{\mathcal{H}}$ by taking a disjoint union of the closures of the homogeneity strips $\mathcal{H}_0$ and $\mathcal{H}_k$'s, $\left|k\right|\geq k_0.$ Similarly, the map  $\mathcal{F}^n:\mathcal{M}_{\mathcal{H}}\rightarrow\mathcal{M}_{\mathcal{H}}$ is not smooth on the `extended' singularity set
 $\mathcal{S}_n^{\mathbb{H}}=\mathcal{S}_n\bigcup\left(\bigcup_{m=0}^n \mathcal{F}^{-m}(\mathbb{S})\right)$ and the map  $\mathcal{F}^{-n}:\mathcal{M}_{\mathcal{H}}\rightarrow\mathcal{M}_{\mathcal{H}}$ is not smooth on
 $\mathcal{S}_{-n}^{\mathbb{H}}=\mathcal{S}_{-n}\bigcup\left(\bigcup_{m=0}^n \mathcal{F}^{m}(\mathbb{S})\right).$

Let $\xi_n^s(x)$ denote the open connected components of $\mathcal{M}\setminus\mathcal{S}_{n}^{\mathbb{H}}$ containing $x.$
To describe dynamically H\"older continuous functions for billiard maps, we introduce the separation time
 $$s_{+}(x,y)=\min\{n\geq 0: y\notin \xi_n^s(x)\}.$$
That is, the first time when the images $\mathcal{F}^n(x)$ and $\mathcal{F}^n(y)$ lie in different connected components of the new collision space $\mathcal{M}_{\mathcal{H}}.$  Notably, if $x$ and $y$ lie on one unstable curve, then
$$d(x,y)\leq C\Lambda^{-s_{+}(x,y)}.$$ Analogously, we define the past separation time
 $$s_{-}(x,y)=\min\{n\geq 0: y\notin \xi_n^u(x)\},$$
where $\xi_n^u(x)$ denotes the open connected components of $\mathcal{M}\setminus\mathcal{S}_{-n}^{\mathbb{H}}$ containing $x.$

\begin{Def}[Dynamically H\"older continuous functions]\label{dynHol} \cite[Definition 7.26]{ChernovMarkarian06}
A function $A:\mathcal{M}\rightarrow\mathbb{R}$ is said to be dynamically H\"older continuous if there are constants $\theta_A\in(0,1)$ and $K_A>0$ such that for any $x$ and $y$ lying on one unstable curve,
\begin{equation}\label{Holunstb}
|A(x)-A(y)|\leq K_A\theta_A^{s_{+}(x,y)},
\end{equation}
and for any $x$ and $y$ lying on one stable curve,
\begin{equation}\label{Holstb}
|A(x)-A(y)|\leq K_A\theta_A^{s_{-}(x,y)}.
\end{equation}
\end{Def}

\begin{Rem}\label{HolRem}
The space of H\"older continuous functions is contained in the space of dynamically H\"older continuous functions. Namely, suppose  $A$ to be a H\"older continuous function such that
$$|A(x)-A(y)|\leq C_A d(x,y)^{\alpha_A}\quad\forall\,x,\,y\in\mathcal{M}$$
for $C_A>0$ and $0<\alpha_A\leq 1.$ Then $A$ is dynamically H\"older continuous with $K_A=O(C_A)$ and $\theta_A=\Lambda^{-\alpha_A}.$
\end{Rem}

\begin{Rem}
We may define dynamically H\"older continuous functions on $\mathcal{M}^k$ as follows.
A function $A:\mathcal{M}^k\rightarrow\mathbb{R}$ is said to be dynamically H\"older continuous if there are $\theta_A\in(0,1)$ and $K_A>0$ such that for any $x=(x_1,\cdots,x_k),$ $y=(y_1,\cdots,y_k),$
$x_i$ and $y_i(1\leq i\leq k)$ lying on one unstable curve
\begin{equation}\label{MHolunstb}
|A(x)-A(y)|\leq K_A\theta_A^{\min_{1\leq i\leq k}\{s_{+}(x_i,y_i)\}},
\end{equation}
and for any $x=(x_1,\cdots,x_k),$ $y=(y_1,\cdots,y_k),$
$x_i$ and $y_i(1\leq i\leq k)$ lying on one stable curve
\begin{equation}\label{MHolnstb}
|A(x)-A(y)|\leq K_A\theta_A^{\min_{1\leq i\leq k}\{s_{-}(x_i,y_i)\}}.
\end{equation}
Similarly, if we take $A$ to be a H\"older continuous function such that
$$|A(x)-A(y)|\leq C_A d(x,y)^{\alpha_A}\quad\forall\,x,\,y\in\mathcal{M}^k$$
for $C_A>0$ and $0<\alpha_A\leq 1.$ Then $A$ is dynamically H\"older continuous with $\theta_A=\Lambda^{-\alpha_A}$ and some constant $K_A$ depending  on $C_A,$  $k$ and the system.
\end{Rem}

We denote by $\mathcal{DH}(\mathcal{M}^k)$ the space of dynamically H\"older continuous functions on $\mathcal{M}^k$ which are essentially bounded. For $A\in\mathcal{DH}(\mathcal{M}^k),$ we take the norm \bea\label{NomDHCBi}\|A\|_{\mathcal{DH}}=\|A\|_\infty+K_A,\eea
where $\|\cdot\|_\infty$ is the essential supremum norm and $K_A$ is the smallest constant satisfying \eqref{MHolnstb}.

\begin{Thm} \cite[Theorem 7.41]{ChernovMarkarian06}  \label{decaycolmap}
	Let $\mathcal{A}=\{A_i\}_{i=1}^l$, $\mathcal{B}=\{B_j\}_{j=1}^m$ be two sets of dynamically H\"older continuous functions.
	Suppose that there exists constants $\theta_\mathcal{A}\in(0,1),$ $K_\mathcal{A}>0,$ $\|\mathcal{A}\|_\infty>0$ and $\theta_\mathcal{B}\in(0,1),$ $K_\mathcal{B}>0,$ $\|\mathcal{B}\|_\infty>0$ such that for all $i, j,$
	$$\theta_{A_i}\leq\theta_\mathcal{A},\quad K_{A_i}\leq K_\mathcal{A},\quad \|A_i\|_\infty\leq\|\mathcal{A}\|_\infty,$$
and  $$\theta_{B_i}\leq\theta_\mathcal{B},\quad K_{B_i}\leq K_\mathcal{B},\quad \|B_i\|_\infty\leq\|\mathcal{B}\|_\infty.$$

	We consider two products $\tilde{A} = A_0 (A_1 \circ \mathcal{F}^{t_{-1}}) \cdots (A_l \circ \mathcal{F}^{t_{-l}})$ and $\tilde{B} = B_0 (B_1 \circ \mathcal{F}^{t_1}) \cdots (B_m \circ \mathcal{F}^{t_m})$,
	 where $t_{-l} < \cdots < t_{-1} <0<t_1 < \cdots <t_m$. Then there are constants
	 $C_{\mathcal{A}, \mathcal{B}} >0,$ $\theta=\theta_{\mathcal{A}, \mathcal{B}}<1$ such that for all
	 $n>0,$ we have
	\bea\label{MEMBilo} \left| \int \tilde{A} (\tilde{B} \circ \mathcal{F}^n) \rd\mu - \int \tilde{A} \rd \mu \int \tilde{B} \rd\mu  \right| \leq
	C_{\mathcal{A}, \mathcal{B}} \; \theta^n,\eea
where $$C_{\mathcal{A}, \mathcal{B}}=C\|\mathcal{A}\|_\infty^l\|\mathcal{B}\|_\infty^m
\left(\frac{K_\mathcal{A}\|\mathcal{B}\|_\infty}{1-\theta_\mathcal{A}}+
\frac{K_\mathcal{B}\|\mathcal{A}\|_\infty}{1-\theta_\mathcal{B}}+\|\mathcal{A}\|_\infty\|\mathcal{B}\|_\infty\right).$$
\end{Thm}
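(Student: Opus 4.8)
\emph{The plan is to deduce the estimate from the coupling/equidistribution machinery for dispersing billiards (the Growth Lemma and exponential Coupling Lemma of~\cite{ChernovMarkarian06}, Chapter~7), after reducing the multiple product to the one-sided regularity of the two factors $\tilde A$ and $\tilde B$.}

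First I would record how the separation times transform under the dynamics: $s_{+}(\mathcal{F}^t x,\mathcal{F}^t y)=s_{+}(x,y)-t$ and $s_{-}(\mathcal{F}^t x,\mathcal{F}^t y)=s_{-}(x,y)+t$. The point of the sign conditions $t_{-l}<\cdots<t_{-1}<0<t_1<\cdots<t_m$ is that each factor of $\tilde A$ carries only \emph{past} iterates, so composition improves unstable regularity: for $x,y$ on one unstable curve, $|A_i(\mathcal{F}^{t_{-i}}x)-A_i(\mathcal{F}^{t_{-i}}y)|\leq K_{A_i}\theta_{A_i}^{\,s_{+}(x,y)+|t_{-i}|}$, i.e.\ $A_i\circ\mathcal{F}^{t_{-i}}$ is dynamically H\"older along unstable curves with constant $\leq K_{A_i}\theta_{A_i}^{|t_{-i}|}$. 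Applying the product rule for dynamically H\"older functions and summing the geometric series, using $|t_{-i}|\geq i$ so that $\sum_i \theta_{\mathcal{A}}^{|t_{-i}|}\leq \theta_{\mathcal{A}}/(1-\theta_{\mathcal{A}})$, I would conclude that $\tilde A$ is dynamically H\"older along unstable curves with constant bounded by $C\|\mathcal{A}\|_\infty^{\,l}K_{\mathcal{A}}/(1-\theta_{\mathcal{A}})$, while $\|\tilde A\|_\infty\leq\|\mathcal{A}\|_\infty^{\,l+1}$. The mirror computation with forward iterates shows $\tilde B$ is dynamically H\"older along stable curves with constant $\leq C\|\mathcal{B}\|_\infty^{\,m}K_{\mathcal{B}}/(1-\theta_{\mathcal{B}})$ and $\|\tilde B\|_\infty\leq\|\mathcal{B}\|_\infty^{\,m+1}$. (Note $\tilde A$ is generally \emph{not} regular along stable curves, nor $\tilde B$ along unstable, so the full two-sided decay theorem cannot be quoted verbatim; only one-sided regularity is available and it is exactly what the method needs.)

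Next I would run the coupling argument. Decompose $\mu$ into standard pairs, i.e.\ unstable curves carrying regular conditional densities, and split the orbit of length $n$ in half so the two exponential mechanisms run at a common rate $\theta^{cn}$. During the first half-iteration the Growth Lemma controls the mass supported on short unstable pieces created by the singularity set $\mathcal{S}_n^{\mathbb{H}}$, and the push-forward of each standard pair becomes a long unstable curve; the Coupling Lemma then pairs the pushed measure with $\mu$ up to an exponentially small residual, the coupled points lying on common stable manifolds. Because $\tilde A$ is nearly constant along each unstable leaf (error governed by its unstable H\"older constant times $\theta^{cn}$) and $\tilde B$ is nearly constant along stable manifolds (error governed by its stable H\"older constant), the coupled contribution reproduces $\int\tilde A\,\rd\mu\int\tilde B\,\rd\mu$ and the uncoupled contribution is $O(\theta^{cn})$. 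Collecting the three sources of error — the baseline coupling residual $\|\tilde A\|_\infty\|\tilde B\|_\infty\,\theta^{cn}$, the unstable-smoothing error $\sim(\text{unstable const of }\tilde A)\,\|\tilde B\|_\infty\,\theta^{cn}$, and the stable-smoothing error $\sim\|\tilde A\|_\infty\,(\text{stable const of }\tilde B)\,\theta^{cn}$ — and substituting the bounds from the previous paragraph, yields precisely the stated form of $C_{\mathcal{A},\mathcal{B}}=C\|\mathcal{A}\|_\infty^{\,l}\|\mathcal{B}\|_\infty^{\,m}\bigl(\tfrac{K_{\mathcal{A}}\|\mathcal{B}\|_\infty}{1-\theta_{\mathcal{A}}}+\tfrac{K_{\mathcal{B}}\|\mathcal{A}\|_\infty}{1-\theta_{\mathcal{B}}}+\|\mathcal{A}\|_\infty\|\mathcal{B}\|_\infty\bigr)$.

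The genuinely hard part is the dynamical input rather than the bookkeeping: the Growth Lemma together with the exponential Coupling Lemma for dispersing billiards, which require the homogeneity-strip construction to tame distortion near $\partial\mathcal{M}$ and to handle the proliferation of singularity curves under iteration. The remaining delicate point is synchronizing the two exponential rates — the equidistribution of the unstable leaf-averages of $\tilde A$ and the coupling decay felt by $\tilde B$ — which is why one splits the time $n$ and absorbs the resulting loss into the single constant $\theta=\theta_{\mathcal{A},\mathcal{B}}<1$.
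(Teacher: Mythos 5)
The paper does not prove this statement; it is quoted verbatim from \cite[Theorem 7.41]{ChernovMarkarian06}, so the ``paper's proof'' is the one in that reference. Your outline correctly reconstructs that argument --- the reduction to one-sided dynamical H\"older regularity of $\tilde A$ along unstable curves and of $\tilde B$ along stable curves (with the geometric-series bound producing the factors $K_{\mathcal{A}}/(1-\theta_{\mathcal{A}})$ and $K_{\mathcal{B}}/(1-\theta_{\mathcal{B}})$ in $C_{\mathcal{A},\mathcal{B}}$), followed by the standard-pair/Growth Lemma/Coupling Lemma machinery --- so it is essentially the same approach, with the hard dynamical inputs correctly identified and attributed.
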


Note that the constant $\theta=\theta_{\mathcal{A}, \mathcal{B}}$ depends on $\left\{\theta_{A_i}\right\}_{1\leq i\leq l}$ and $\left\{\theta_{B_j}\right\}_{1\leq j\leq m}.$ To capture this dependence, we consider the space of Lipschitz functions as a subspace of dynamically H\"older continuous functions endowed with the norm  $\|\cdot\|_{\mathcal{DH}}$ defined by \eqref{NomDHCBi}. Denoting this space on $\mathcal{M}^k$ as $\mathcal{DH_L}(\mathcal{M}^k),$ we proceed to demonstrate that the system $(\mathcal{M},\mathcal{B}(\mathcal{M}),\mu,\mathcal{F},\mathcal{DH_L}(\mathcal{M}^{r+1}))$ is a generalized $(r+1)$-fold exponentially mixing system,  in accordance with  Definition~\ref{def.mixing}.

We begin  the proof by noting that  $\rm{(Prod)}$  is a direct consequence of \eqref{NomDHCBi} and we can take $L=\Lambda$ for  $\rm{(Gr)}$ by Remark \ref{HolRem}.  Subsequently, invoking \eqref{MEMBilo} yields the multiple exponentially mixing property of $\mathcal{F}.$ Specifically, there exist constants  $C>0$ and $\theta<1$ such that for any  $A_0, A_1,\dots, A_r\in\mathcal{DH_L}(\mathcal{M})$
and any $r$ tuple $k_1<k_2<\dots<k_r,$
 \begin{equation}\label{MEMBil}
 \left| \int \prod_{j=0}^{r} \left(A_j \circ \mathcal{F}^{k_j} \right) \rd \mu  -
 \prod_{j=0}^{r}\int A_j \rd\mu  \right| \leq C \theta^n \prod_{j=0}^{r} \|A_j\|_{\mathcal{DH}},
 \end{equation}
 where $\DS n=\min_j (k_j-k_{j-1})$ with $k_0=0.$

For the generalized $(r+1)$-fold exponentially mixing property, we consider a larger class of functions. Specifically, we require the existence of constants $\brC>0$ and $\brtheta<1$ such that for any Lipschitz function $B\in{\mathcal{DH_L}(\mathcal{M}^{r+1})},$ the following inequality is satisfied:
 \begin{equation}\label{MEMBilF}
\left|\int B(x_0,\mathcal{F}^{k_1}x_0,\cdots,\mathcal{F}^{k_r}x_0) \rd\mu(x_0)-\int B(x_0,\cdots,x_r)\rd\mu(x_0)\cdots \rd\mu(x_r)\right|\leq \brC\brtheta^n\left\|B\right\|_{\mathcal{DH}},
 \end{equation}
 where $\DS n=\min_j (k_j-k_{j-1})$ with $k_0=0.$

Using the same argument as presented in Appendix A of \cite{DFL22}, we can establish the equivalence between equations \eqref{MEMBil} and \eqref{MEMBilF}. Namely, it demonstrates that if either \eqref{MEMBil} or \eqref{MEMBilF} is valid for $C^s$ functions with endowed with $\|\cdot\|_{C^s}$, then this holds for Lipschitz functions with norm $\|\cdot\|_{\mathcal{DH}},$ and the converse is also true. Furthermore,  \eqref{MEMBilF} arises from \eqref{MEMBil}  through the decomposition of $C^s$ functions relative to a basis within the Sobolev space, utilizing the norm  $\|\cdot\|_{C^s}.$ This completes the verification of Definition~\ref{def.mixing} for the system $(\mathcal{M},\mathcal{B}(\mathcal{M}),\mu,\mathcal{F},\mathcal{DH_L}(\mathcal{M}^{r+1})).$

\subsection{Proof of Theorem~\ref{BidMultiLog}(a).}
Suppose $x\in\mathcal{M}\setminus\bigcup_{n\in\mathbb{N}}\left(\mathcal{S}_{n}^\mathbb{H}\cup\mathcal{S}_{-n}^\mathbb{H}\right).$
We set $$E_\rho=B(x, \rho)=\{y:d(x,y)\leq\rho\}\,\,\,\text{for}\,\,\,\rho<d(x,\mathcal{S}_0^\mathbb{H}),$$
$$E_\rho^k=\mathcal{F}^{-k}E_\rho\quad\text{and}\quad\rho_n=n^{-1/2}(\ln n)^{-\delta},\quad\delta>0.$$
We now show that the sequence $\{E_{\rho_n}\}$ constitutes simple admissible targets as Definition~\ref{def.targets}. To do this, we  break down the process into several steps:
 \begin{itemize}
\item[$\bullet$] Direct calculation gives that $$\sigma(\rho)=\mu(E_\rho)=O(\rho\sin\rho\cos\varphi).$$
\item[$\bullet$] We select functions $A_\rho^+,\,A_\rho^-\in\mathcal{DH_L}(\mathcal{M})$ such that $$1_{E_{\rho-\rho^s}}\leq A_\rho^-\leq 1_{E_\rho}\leq A_\rho^+\leq 1_{E_{\rho+\rho^s}}\,\, \text{for some}\,\, s>1\,\, \text{and}\,\, \|A_\rho^\pm\|_\mathbb{\mathcal{DH}}\leq \rho^{-2s}.$$
\item[$\bullet$] Then $$\mathbb{E}A_\rho^+-\mathbb{E}A_\rho^-\leq  C\fv(\rho)^{(1+s)/2},$$ which verifies  $\rm{(Appr)}.$
\item[$\bullet$] Finally we have the following proposition for $\rm{(Mov)}.$
\end{itemize}

\begin{Prop}
For almost every $x$ and  each
$A, K>0$, there exists $n_0=n_0(x)$ such that for all $n\geq n_0$ and all $k\leq K\ln n$ we have
\begin{equation}
\label{EqWNRBid}
 \mu\left(B(x, \rho_n) \cap \mathcal{F}^{-k} B(x, \rho_n)\right)\leq  \mu \left(B(x, \rho_n)\right) (\ln n)^{-A}.
\end{equation}

\end{Prop}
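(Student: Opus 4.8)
The plan is to prove the estimate \eqref{EqWNRBid}, which is the condition \eqref{eq.MOV} for the ball targets, by showing that a short-return event forces the point $y$ to lie in a thin neighborhood of the stable/unstable manifold through $x$, so that the extra factor $(\ln n)^{-A}$ comes from the smallness of $\rho_n$ relative to the hyperbolic expansion over a logarithmic time scale. Concretely, if $y\in B(x,\rho_n)$ and $\mathcal{F}^k y\in B(x,\rho_n)$, then both $y$ and $\mathcal{F}^k y$ are within $\rho_n$ of $x$. The key geometric mechanism is hyperbolicity: along the unstable direction $\mathcal{F}^k$ expands distances by a factor $\gtrsim \Lambda^k$, so a point whose $\mathcal{F}^k$-image stays in $B(x,\rho_n)$ can only have been $O(\Lambda^{-k}\rho_n)$ away from the local stable manifold $W^s_{\loc}(x)$, while remaining within $\rho_n$ in the other direction. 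Thus the intersection $B(x,\rho_n)\cap\mathcal{F}^{-k}B(x,\rho_n)$ is contained in a curvilinear rectangle of stable extent $\sim\rho_n$ and unstable extent $\sim\Lambda^{-k}\rho_n$.

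First I would fix $x$ avoiding all singularity sets $\mathcal{S}_n^{\mathbb{H}}\cup\mathcal{S}_{-n}^{\mathbb{H}}$ (a full-measure condition, already imposed) so that the local stable and unstable manifolds $W^s_{\loc}(x)$, $W^u_{\loc}(x)$ are well defined and of uniformly bounded geometry for the relevant iterates. Using the invariant cone structure, I would decompose $B(x,\rho_n)$ in local stable/unstable coordinates. Second, I would quantify the expansion: for $k\leq K\ln n$ the factor $\Lambda^{-k}\geq \Lambda^{-K\ln n}=n^{-K\ln\Lambda}$, which is only polynomial in $n$, whereas $\rho_n=n^{-1/2}(\ln n)^{-\delta}$ and $\sigma(\rho_n)=\mu(B(x,\rho_n))=O(\rho_n^2)$ up to the $\cos\varphi$ factor at $x$ (which is a fixed positive constant since $x\notin\partial\mathcal{M}$). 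Hence I expect the measure of the rectangle to be bounded by $C\,\rho_n\cdot(\Lambda^{-k}\rho_n)\cdot\cos\varphi$, giving $\mu\big(B(x,\rho_n)\cap\mathcal{F}^{-k}B(x,\rho_n)\big)\leq C\,\Lambda^{-k}\mu(B(x,\rho_n))$, and I would then absorb the small-$k$ range separately.

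The crucial point is that the naive bound $C\Lambda^{-k}\mu(B(x,\rho_n))$ only beats $\mu(B(x,\rho_n))(\ln n)^{-A}$ once $\Lambda^{-k}\leq(\ln n)^{-A}$, i.e. once $k\geq (A\ln\ln n)/\ln\Lambda$; for the genuinely small returns $k=O(\ln\ln n)$ one cannot rely on expansion alone. For those small $k$ I would instead argue that a short-time return of $x$ to within $2\rho_n$ of itself is a measure-zero coincidence for a fixed generic $x$: since $x$ is not periodic and avoids the singularity sets, the finitely many points $\mathcal{F} x,\dots,\mathcal{F}^{k_0}x$ (with $k_0=O(\ln\ln n)$ the threshold) are at a fixed positive distance $\delta_0(x)>0$ from $x$, so for $n$ large enough $2\rho_n<\delta_0(x)$ forces $\mathcal{F}^{-k}B(x,\rho_n)\cap B(x,\rho_n)=\emptyset$ and the left-hand side vanishes. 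This is exactly the role of the "for almost every $x$, there exists $n_0=n_0(x)$" quantifier.

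The main obstacle will be making the hyperbolic rectangle estimate rigorous near the singularities and across homogeneity strips: unlike for smooth Anosov systems, the local stable/unstable manifolds in dispersing billiards can be short and their lengths shrink as one approaches $\mathcal{S}_n^{\mathbb{H}}$, so the clean product structure of $B(x,\rho_n)\cap\mathcal{F}^{-k}B(x,\rho_n)$ is only valid once $\rho_n$ is small compared to the distance from $x$ (and its first $K\ln n$ iterates) to the extended singularity set. Controlling this requires the standard growth/distortion lemmas and the fact that, for the fixed generic $x$ chosen above, $d(\mathcal{F}^j x,\mathcal{S}_0^{\mathbb{H}})$ decays at most polynomially in $j$, so for $j\leq K\ln n$ these distances stay $\gg\rho_n$; I would invoke these billiard-specific estimates from \cite{ChernovMarkarian06} rather than reprove them, reducing the argument to the clean expansion-rectangle bound above.
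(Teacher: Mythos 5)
Your overall two-regime split (expansion for larger $k$, non-return for small $k$) matches the paper's strategy, but both halves of your argument have problems, and the small-$k$ half contains a genuine gap. You claim that for a generic non-periodic $x$ the points $\mathcal{F}x,\dots,\mathcal{F}^{k_0}x$ with $k_0=O(\ln\ln n)$ stay at a \emph{fixed} distance $\delta_0(x)>0$ from $x$. This is false: $k_0$ grows with $n$, and by Poincar\'e recurrence $\liminf_{k\to\infty} d(\mathcal{F}^k x,x)=0$ for $\mu$-a.e.\ $x$, so $\min_{k\le k_0(n)} d(\mathcal{F}^k x,x)\to 0$ as $n\to\infty$. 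What you actually need is a \emph{quantitative} non-recurrence statement --- that the first return time of $B(x,\rho)$ to itself exceeds $c\ln|\ln\rho|$ (the paper proves the stronger bound $\epsilon|\ln\rho|$, i.e.\ that a.e.\ point is Diophantine in the sense of Definition~\ref{Diop}). This is a real theorem, not a soft genericity remark: the paper derives it from the Saussol--Troubetzkoy--Vaienti return-time result (Proposition~\ref{prop1}) applied to the homogeneity-strip partition, which requires showing $B(x,e^{-\beta n})\subset\mathbb{H}^n(x)$ via a Borel--Cantelli control of $d(\mathcal{F}^n x,\mathbb{S})$ and positivity of the entropy $h_\mu(\mathcal{F},\mathbb{H})$. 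None of this is present in your proposal, and without it the small-$k$ range is simply unproved.

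In the large-$k$ regime your ``curvilinear rectangle'' bound $\mu\bigl(B(x,\rho_n)\cap\mathcal{F}^{-k}B(x,\rho_n)\bigr)\le C\Lambda^{-k}\mu(B(x,\rho_n))$ is also not justified as stated: $\mathcal{F}^{-k}B(x,\rho_n)$ is cut by the (extended) singularity set into many components, and the number of such components intersecting $B(x,\rho_n)$ can grow exponentially in $k$, potentially cancelling the factor $\Lambda^{-k}$. The paper does not prove your clean product bound; instead it foliates a neighborhood of $x$ by weakly homogeneous unstable curves and invokes the Growth Lemma (\ref{EqGL}), which precisely quantifies the competition between expansion and fragmentation, splitting into ``short'' and ``long'' image components and obtaining only the weaker (but sufficient) estimate $\mu(B(x,\rho_n))^{1+\eta}$ for $k\ge\epsilon\ln n$. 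Moreover, running this argument down to $k\sim\ln\ln n$, as your split requires, is more delicate than for $k\ge\epsilon\ln n$: the gain $\delta^{k/2}$ from the Growth Lemma is then only a fixed power of $\ln n$, so the threshold would have to depend on $A$, and the short-component error terms need to be re-examined. You correctly flag singularities as ``the main obstacle,'' but deferring it to unnamed standard lemmas leaves the core of the estimate unproved.
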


We begin by proving \eqref{EqWNRBid} for $k\geq \epsilon\ln n,$ $\epsilon>0.$

We call an unstable curve  $W\subset\mathcal{M}$ weakly homogeneous if $W$ belongs to a single homogeneous strip $\mathbb{H}_k.$ We set a small disc $U$ centered at $x$ that is foliated by weakly homogeneous unstable curves $W^u_x(\rho_n) $  with length $\rho_n$, such that $  B(x,\rho_n)\subset U \subset B(x,C\rho_n)\subset\mathbb{H}_k$ for some constant $C>0.$

For $y\in U$ and $n\geq 0$, we denote by $W^u_x(y,\rho_n)$ the weakly homogeneous unstable curve containing the point $y$, $W^u_{x,k}(y,\rho_n)$ the connected component of $\mathcal{F}^k \left(W^u_{x} (y,\rho_n)\right)$ containing the point $\mathcal{F}^k (y),$  Then we obtain
\begin{equation}
\label{BillardIntEst}
\mu\left(B(x, \rho_n) \cap \mathcal{F}^{-k} B(x, \rho_n)\right) \leq   \mu (U \cap \mathcal{F}^{-k} U)\leq C\int \left|W^u_{k,x}(y,\rho_n) \cap U\right| \rd y,
\end{equation}
for some constant $C>0,$ where $|\cdot|$ denotes the length of the curve.
For the purpose of  calculating the intersection of $U$ and all connected components of one homogeneous unstable curve stretching by  $\mathcal{F}^k$, we require the following Growth Lemma to  characterize the size of $W^u_{x,k}(y,\rho_n).$

\begin{Lem}\cite[Theorem 5.52]{ChernovMarkarian06}
There are constants $\hat \Lambda >1$, $\delta \in (0,1)$, $C_1, C_2>0$ such that for all $k>0$, $\varepsilon>0,$
\begin{equation}
\label{EqGL}	
	 m_W(r_k(y)<\varepsilon) \leq C_1 (\delta\hat \Lambda)^k m_W(r_0(y) < \varepsilon / \hat{\Lambda}^k )  + C_2 \varepsilon m_W(W),
\end{equation}	
where $m_W$ is the Lebesgue measure on $W,$ and  $r_k(y)$ is the distance from the point $\mathcal{F}^k(y)$ to the nearest endpoint of $W^u_{x,k}(y,\rho_n)$.
\end{Lem}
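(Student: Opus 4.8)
The plan is to prove this exactly as in \cite[Theorem 5.52]{ChernovMarkarian06}, by iterating a one-step expansion inequality for the distribution of the distance-to-endpoint function. I would use three structural features of the dispersing billiard map $\mathcal{F}$ restricted to weakly homogeneous unstable curves. First, uniform hyperbolicity: $\mathcal{F}$ expands every such curve by a factor at least $\hat\Lambda>1$, where $\hat\Lambda$ incorporates the extra expansion gained near $\partial\mathcal{M}$. Second, bounded distortion of $\mathcal{F}|_W$ on each homogeneous component; this is precisely why the homogeneity strips $\mathcal{H}_k$ were introduced, since the distortion of $\mathcal{F}$ is unbounded as $\cos\varphi\to 0$ but becomes uniformly bounded on each strip. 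Third, the one-step expansion estimate: writing $\mathcal{F}(W)=\bigsqcup_i V_i$ for the maximal homogeneous unstable components of the image and $\Lambda_i$ for the minimal expansion factor on the $\mathcal{F}$-preimage of $V_i$, there is $\delta<1$ (made small by taking the homogeneity cutoff $k_0$ large) with $\sum_i \Lambda_i^{-1}\le \delta+C|W|$, together with the fact that the cut points produced by $\mathcal{S}_1^{\mathbb{H}}\cup\mathbb{S}$ along $W$ are controlled.

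Writing $F_n(\varepsilon)=m_W(r_n<\varepsilon)$, I would first establish a one-step recursion by splitting the endpoints of the $n$-step component of $y$ into \emph{old} endpoints, the images under $\mathcal{F}$ of endpoints of the $(n-1)$-step component, and \emph{new} endpoints, those freshly created where $\mathcal{F}^{n-1}(W)$ meets $\mathcal{S}_1^{\mathbb{H}}\cup\mathbb{S}$. For an old endpoint the expansion bound forces $\mathcal{F}^{n-1}(y)$ to lie within $\varepsilon/\hat\Lambda$ of the corresponding endpoint of its $(n-1)$-component, so by bounded distortion and the one-step sum $\sum_i\Lambda_i^{-1}\le\delta$ these points contribute at most a multiple $\delta\hat\Lambda$ of $F_{n-1}(\varepsilon/\hat\Lambda)$, the factor $\hat\Lambda$ coming from the change of normalization across the step. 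For a new endpoint, transversality of $\mathcal{S}_1^{\mathbb{H}}\cup\mathbb{S}$ to unstable curves together with bounded distortion bounds the measure of the $\varepsilon$-neighbourhood of the new cuts by $C\varepsilon\, m_W(W)$. Combining the two contributions gives
\begin{equation*}
F_n(\varepsilon)\le \delta\hat\Lambda\, F_{n-1}(\varepsilon/\hat\Lambda)+C\varepsilon\, m_W(W).
\end{equation*}

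Iterating this inequality $k$ times produces exactly the two terms of \eqref{EqGL}. The homogeneous part accumulates to $C_1(\delta\hat\Lambda)^k F_0(\varepsilon/\hat\Lambda^k)=C_1(\delta\hat\Lambda)^k m_W(r_0<\varepsilon/\hat\Lambda^k)$, while the new-cut contributions, each rescaled by $\hat\Lambda^{-1}$ at every subsequent unwinding step, form a convergent geometric series summing to $C_2\varepsilon\, m_W(W)$; convergence uses exactly that the growth of the number of cut points is dominated by the expansion rate, i.e.\ that $\delta<1$. Note that since $F_0(\varepsilon/\hat\Lambda^k)$ is comparable to $\varepsilon/\hat\Lambda^k$ for a long curve, the first term behaves like $\delta^k\varepsilon$ and genuinely decays, the factor $\hat\Lambda^k$ being bookkeeping paired with the $\hat\Lambda^{-k}$ inside the measure.

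The main obstacle is the one-step expansion estimate near $\partial\mathcal{M}$. An unstable curve approaching the boundary is chopped by the homogeneity strips $\mathcal{H}_k$ into infinitely many pieces, so a priori the number of new endpoints is unbounded; the key point is that the expansion factors $\Lambda_i$ blow up like $(\cos\varphi)^{-1}$ on these near-boundary pieces, so the weighted sum $\sum_i\Lambda_i^{-1}$ stays below $1$ in spite of the proliferation, provided $k_0$ is chosen large. The same boundary degeneration makes the distortion of $\mathcal{F}$ unbounded, and restoring control of it — which is the entire reason the homogeneity partition is used — is what legitimizes the transfer of length estimates across $\mathcal{F}$ in both the old- and new-endpoint steps. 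Establishing these two uniform estimates (one-step expansion and bounded distortion on homogeneous components) is the technical heart of the argument; everything else is the elementary iteration above.
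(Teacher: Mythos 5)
The paper does not actually prove this lemma — it is imported verbatim from \cite[Theorem 5.52]{ChernovMarkarian06} — and your sketch faithfully reconstructs the proof of that cited source: bounded distortion on homogeneity strips, the one-step expansion estimate $\sum_i \Lambda_i^{-1}\le \delta<1$ (with the infinitely many near-boundary pieces in the strips $\mathcal{H}_k$ contributing a controllable tail $\sum_{k\ge k_0}O(k^{-2})$ because the expansion there blows up like $(\cos\varphi)^{-1}$), the old/new-endpoint dichotomy giving the one-step recursion, and the geometric-series iteration, so your proposal is correct and essentially the same approach. The only bookkeeping point to watch is the regime of very short $W$, where the per-step new-cut contribution pulled back to $W$ is of order $\delta\varepsilon$ rather than $\varepsilon\, m_W(W)$; this is harmless because one may choose the constants with $\delta\hat\Lambda\ge 1$, so that term is absorbed into the first term via $m_W\bigl(r_0<\varepsilon/\hat\Lambda^k\bigr)=\min\bigl(2\varepsilon/\hat\Lambda^k,\, m_W(W)\bigr)$, exactly as in the reference.
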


Due to (\ref{EqGL}),
$$m_W\left(\left|W_{k,x}^u(y,\rho_n)\right|<\delta^{-k/2} \rho_n \right)\leq m_W\left(r_k(y) < \delta^{-k/2} \rho_n\right) \leq C_1 \delta^{k/2}\rho_n +C_2 \delta^{-k/2} \rho_n^2,$$
which implies ``most'' of such components are long.
For the short components,
\begin{equation}
\label{BillardShortUns}
\int_{\left|W^u_{k,x}(y,\rho_n)\right| <\delta^{-k/2} \rho_n} \left|W^u_{k,x}(y,\rho_n) \cap U\right| {\rm d} y\leq  (C_1 \delta^{k/2}\rho_n +C_2 \delta^{-k/2} \rho_n^2)\rho_n.
\end{equation}

Without loss of generality, we suppose that  $\diam\mathcal{M}=1.$
In the case of the ``long'' components, that is  $\left|W_{k,x}^u(y,\rho_n)\right|\geq\delta^{-k/2}\rho_n,$  we have the following two cases.

(i) If $\left|W_{k,x}^u(y,\rho_n)\right|\leq 1,$
then $W_{k,x}^u(y,\rho_n) $ intersects $U$ for at most one time. Hence $\left|W^u_{k,x}(y,\rho_n) \cap U\right|\leq \rho_n,$
\begin{equation}
\label{BillardLongUns1}
\int_{\left|W^u_{k,x}(y,\rho_n)\right| \geq \delta^{-k/2} \rho_n} \left|W^u_{k,x}(y,\rho_n) \cap U\right| {\rm d} y\leq C\mu(U)\delta^{k/2} \leq C\mu(B(x,\rho_n))n^{\epsilon\ln\delta/2},
\end{equation}
for $k\geq \epsilon\ln n.$

(ii) If $\left|W_{k,x}^u(y,\rho_n)\right|>1,$ then $W_{k,x}^u(y,\rho_n) $ may intersect $U$ for more than one time,
we get
$$\left|W_{k,x}^u(y,\rho_n)\cap U\right|/\left|W_{k,x}^u(y,\rho_n)\right|\leq 2\rho_n.$$
Hence
\begin{equation}
\label{BillardLongUns2}
\int_{\left|W^u_{k,x}(y,\rho_n)\right| \geq \delta^{-k/2} \rho_n} \left|W^u_{k,x}(y,\rho_n) \cap U\right| \rd y\leq C\mu(U)\rho_n \leq C\mu(B(x, \rho_n))^{\frac32}.
\end{equation}
Combining (\ref{BillardIntEst}), (\ref{BillardShortUns}), (\ref{BillardLongUns1}) and (\ref{BillardLongUns2}), we obtain
$$\mu\left(B(x, \rho_n) \cap \mathcal{F}^{-k} B(x, \rho_n)\right)  \leq \mu\left(B(x, \rho_n)\right)^{1+\eta}
$$
for some $\eta >0$ and $k\geq \epsilon\ln n.$

Next, we consider $B(x, \rho) \cap \mathcal{F}^{-k} B(x, \rho)$ for $k<\epsilon\ln n.$

\begin{Def}\label{Diop}
Let $f$ be a transformation on $X$ preserving the measure $\mu.$ We call $x\in X$ {\it Diophantine} if there exists $\epsilon=\epsilon(x)>0$ and $\rho_0=\rho_0(x)>0$ such that for all $\rho\leq \rho_0$ and all $0<k\leq \epsilon|\ln \rho|,$
$f^{-k} B(x, \rho) \cap B(x, \rho)
= \emptyset.$
\end{Def}

We now  prove that for the dispersing billiard map $\mathcal{F},$ $\mu$-almost every point is  Diophantine.
\begin{Prop}\cite[Theorem 1]{STV02}\label{prop1}
  Let $f: X\rightarrow X$ be a measurable transformation preserving an ergodic probability measure
  $\mu$ and $\tau(A) = \inf \{ n\geq1, f^n A \cap A \neq \emptyset \}$.
  If $\mathcal{P}$ is a finite or countable measurable partition with $h_\mu (f,\mathcal{P})>0$ and $P_n(x)$ is the cylinder of length $n$ containing $x$,
  then almost surely
  $$\liminf_{n\rightarrow \infty} \frac{\tau(P_n(x))}{n} \geq 1.$$
\end{Prop}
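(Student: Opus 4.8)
The plan is to fix $\varepsilon>0$ and prove that for $\mu$-a.e.\ $x$ one has $\tau(P_n(x))>(1-\varepsilon)n$ for all large $n$; letting $\varepsilon\to 0$ along a sequence then yields $\liminf_n \tau(P_n(x))/n\ge 1$. Setting $A_n=\{x:\tau(P_n(x))\le(1-\varepsilon)n\}$, it suffices by the Borel--Cantelli lemma to show that $\sum_n\mu(A_n\cap\Omega)<\infty$ on suitable full-measure pieces $\Omega$, so that $\mu$-a.e.\ $x$ lies in $A_n$ for only finitely many $n$.

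The first step is a purely combinatorial reformulation of $\tau$ in symbolic terms. Writing $w(x,n)=(a_0(x),\dots,a_{n-1}(x))$ for the $\mathcal{P}$-name of $x$ of length $n$, so that $P_n(x)=\{y:w(y,n)=w(x,n)\}$, I will show that $f^{j}P_n(x)\cap P_n(x)\ne\emptyset$ forces the name $w(x,n)$ to be $j$-periodic, i.e.\ $a_i(x)=a_{i-j}(x)$ for all $j\le i\le n-1$; equivalently $f^{j}x\in P_{n-j}(x)$. Indeed, a point $y$ in the intersection satisfies $a_i(y)=a_i(x)$ and $a_{i+j}(y)=a_i(x)$ on the overlapping range, which compares the two copies and yields the periodicity. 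Consequently $A_n\subseteq\bigcup_{j=1}^{\lfloor(1-\varepsilon)n\rfloor}C_{n,j}$, where $C_{n,j}=\{x:\,w(x,n)\text{ has period }j\}$ is a union of those length-$n$ cylinders whose names are completely determined by their first $j$ symbols.

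The second ingredient is the Shannon--McMillan--Breiman theorem: since $h:=h_\mu(f,\mathcal{P})>0$, one has $-\tfrac1m\log\mu(P_m(x))\to h$ for $\mu$-a.e.\ $x$. Fix $\delta<\varepsilon h/2$ and set $G_m=\{x:\,e^{-m(h+\delta)}\le\mu(P_m(x))\le e^{-m(h-\delta)}\}$; then $\mu(G_m)\to 1$ and $\mu$-a.e.\ $x$ lies in $G_m$ for all $m\ge m_0(x)$. The heart of the argument is a counting estimate for $\mu(C_{n,j}\cap G_j\cap G_n)$: each $x\in C_{n,j}$ determines a length-$j$ name $u$, and the length-$n$ cylinder is the $j$-periodic extension $[w(u)]$, so the occurring cylinders are indexed by distinct length-$j$ cylinders $[u]$. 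On $G_j$ each such $[u]$ has measure $\ge e^{-j(h+\delta)}$, so at most $e^{j(h+\delta)}$ of them occur; on $G_n$ each $[w(u)]$ has measure $\le e^{-n(h-\delta)}$. Hence
$$\mu\!\left(C_{n,j}\cap G_j\cap G_n\right)\le e^{j(h+\delta)}\,e^{-n(h-\delta)}\le e^{\,n(2\delta-\varepsilon h)},$$
which is exponentially small uniformly over $j\le(1-\varepsilon)n$, since $j(h+\delta)-n(h-\delta)\le n(2\delta-\varepsilon h)<0$. Summing over the at most $n$ values of $j$ and restricting to $\Omega_{m_0}=\{x:x\in G_m\ \forall m\ge m_0\}$ (where all scales $j\ge m_0$ and the scale $n$ are automatically typical) gives $\sum_n\mu(A_n\cap\Omega_{m_0})<\infty$; Borel--Cantelli yields that $A_n$ occurs only finitely often on $\Omega_{m_0}$, and since $\bigcup_{m_0}\Omega_{m_0}$ has full measure the conclusion follows.

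The main obstacle is the \emph{uniform} control of the error over all admissible shifts $j$ simultaneously, because the counting bound relies on typicality at scale $j$, which is unavailable for the small scales $j<m_0(x)$. I expect to resolve this by the observation that a $j$-periodic length-$n$ name is injectively determined by its restriction to length $m_0$ (as $j\le m_0$), so the contribution of these finitely many small $j$ is bounded by the fixed number $e^{m_0(h+\delta)}$ of typical $m_0$-cylinders times $e^{-n(h-\delta)}$, again summable in $n$; this also handles countable partitions $\mathcal{P}$, where a crude alphabet count would fail. Ergodicity enters only through the a.e.\ convergence of the local entropy in the Shannon--McMillan--Breiman theorem, hence the a.e.\ eventual membership in the sets $G_m$.
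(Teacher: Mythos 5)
The paper offers no proof of this proposition --- it is quoted directly from \cite[Theorem 1]{STV02} --- so there is nothing internal to compare against; judged on its own, your argument is correct and in fact reconstructs the Saussol--Troubetzkoy--Vaienti proof: the observation that $f^jP_n(x)\cap P_n(x)\neq\emptyset$ forces the length-$n$ name to be $j$-periodic (equivalently $f^jx\in P_{n-j}(x)$), followed by a Shannon--McMillan--Breiman count of the at most $e^{j(h+\delta)}$ periodic cylinders meeting the typical set, each of measure at most $e^{-n(h-\delta)}$, is exactly their mechanism, and your handling of the shifts $j<m_0$ via injectivity of the length-$m_0$ prefix correctly supplies the needed uniformity. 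The one point left implicit is the case $h_\mu(f,\mathcal{P})=\infty$, which the hypothesis ``finite or countable partition with $h_\mu(f,\mathcal{P})>0$'' permits and for which SMB is unavailable; this is repaired in one line by replacing $\mathcal{P}$ with a finite coarsening $\mathcal{Q}$ of positive (hence finite) entropy and noting that $P_n(x)\subseteq Q_n(x)$ gives $\tau(P_n(x))\geq\tau(Q_n(x))$.
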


Recall that the collision space $\mathcal{M}$ is divided into homogeneity strips $\mathcal{H}_{\pm k},$ $\mathcal{M}_{\mathcal{H}}$ is a disjoint union of the closures of the $\mathcal{H}_k$'s and  $\partial\mathcal{M}_{\mathcal{H}}=\mathbb{S}.$ Then
 $$ \sum_{n=1}^{\infty}  \mu\left\{ x: d\left(\mathcal{F}^n (x),\mathbb{S}\right)< e^{-an} \right\} < \sum_{n=1}^\infty C e^{-an/2} < \infty,\quad a>0.$$

By Borel-Cantelli Lemma, for a.e. $x\in \mathcal{M}$, $\exists\, n_0 (x)$, $n> n_0(x)$,
\bea\label{biddis}d\left(\mathcal{F}^n (x),\mathbb{S}\right) >e^{-an}.\eea
Fix $x\in\mathcal{M}\setminus\mathbb{S}$ and satisfies \eqref{biddis}, $\exists a_0>a$, s.t.
\bea\label{BidBouDis}  d\left(\mathcal{F}^n (x),\mathbb{S}\right)> e^{-a_0n}, \,\,\forall n\in \mathbb{N}. \eea

We know from $\S$5.5 in  \cite{ChernovMarkarian06} that the derivative of dispersing billiard map is uniformly bounded in each homogeneity strips $\mathbb{H}_k,$ i.e.
\begin{equation}
\label{BidExd}
\left\|D_z\mathcal{F}\right\|\leq c|k|^2,\,\, z\in\mathbb{H}_k
\end{equation}
for some constant $c>0.$ Denote $\beta=\max\{\ln(2c),\,2a_0\}.$ Then we can take $\mathbb{H}=\{\mathcal{H}_{\pm k}\}_{k\geq k_0}\bigcup\{\mathcal{H}_0\}$ as a partition of $\mathcal{M}$ and
\begin{equation}
\label{BidPatN}
\mathbb{H}^n =\bigvee_{j=0}^{n-1} \mathcal{F}^{-n}\mathbb{H}=\left\{\mathcal{H}_{i_0,\ldots,i_{n-1}}:\,\mathcal{H}_{i_0,\ldots,i_{n-1}} =\mathcal{H}_{i_0} \cap \mathcal{F}^{-1} \mathcal{H}_{i_1} \cap \cdots \cap \mathcal{F}^{-(n-1)} \mathcal{H}_{i_{n-1}},\, \mathcal{H}_{i_j}\in\mathbb{H}\right\}\end{equation}
the dynamical partition for any integer $n.$
We note that
 \bea\label{BidBouInd} i_k\leq \sqrt 2e^{ka_0/2}\eea
by \eqref{BidBouDis} and the definition of $\mathcal{H}_k.$

We process to  prove that $\beta(x,e^{-\beta n} ) \subset\mathbb{H}^{n} (x)$ by induction, where $\mathbb{H}^{n} (x)$ represents the member of  $\mathbb{H}^n$ containing $x.$ First, we observe that
 $$d(x,y) < e^{-\beta n} < e^{-a_0} < d(x,\mathbb{S}),$$
then $x$ and $y$ in the same homogeneity strip $\mathcal{H}_{i_0}.$

Using \eqref{BidExd} and \eqref{BidBouInd}, we have $$\left\|D_z\mathcal{F}\right\|\leq c|i_0|^2\leq  2ce^{a_0},\quad z\in\mathbb{H}_{i_0}.$$
It follows that $$d\left(\mathcal{F}(x),\mathcal{F}(y)\right)\leq e^{-\beta n}2ce^{a_0}\leq e^{-2a_0},$$
and $\mathcal{F}(x)$ and $\mathcal{F}(y)$ are in the same homogeneity strip $\mathcal{H}_{i_1}.$

By induction,
$$  d\left(\mathcal{F}^k (x), \mathcal{F}^k (y)\right) \leq e^{-\beta n}2ce^{a_0 k}\leq e^{-a_0 k} \leq d\left(\mathcal{F}^k (x), \mathbb{S}\right),\quad k\leq n,$$
which implies that $B(x,e^{-\beta n})\subseteq\mathbb{H}^{n} (x).$

By Theorem 3.42 in~\cite{ChernovMarkarian06}, $$h_\mu(\mathcal{F})=\int_{\mathcal{M}}\lambda_x^+\rd\mu(x),$$
where $\lambda_x^+$ is the positive Lyapunov exponent of the map $\mathcal{F}$ at the point $x\in\mathcal{M}.$ We observe that for almost every $x,$ $\lambda_x^+\geq\log\Lambda,$ since $\|D_x\mathcal{F}^n(dx)\|/\|dx\|\geq C\Lambda^n$ for any $x\in W$ on an unstable curve $W$ and $dx\in T_x W.$ Consequently, we have
$$h_\mu (\mathcal{F})\geq\log\Lambda>0.$$ Then $h_\mu \left(\mathcal{F},\mathbb{H}\right)>0$ follows from the fact that the members of
 $\bigvee_{m=-n}^{n}\mathcal{F}^m\mathbb{H}$ shrink to points when $n\rightarrow\infty,$ which implies $\bigvee_{m=-\infty}^{+\infty}\mathcal{F}^m\mathbb{H}=\mathcal{B}(\mathcal{M}).$

Therefore, Proposition \ref{prop1} tells us that for almost every $x\in \mathcal{M},$
$$\liminf_{\rho\rightarrow \infty} \frac{\tau\left(B(x,\rho)\right) }{-\log \rho}  = \liminf_{n\rightarrow \infty} \frac{\tau\left(B(x,e^{-\beta n})\right) }{\beta n} \geq \liminf_{n\rightarrow \infty} \frac{\tau\left(\mathbb{H}^n(x)\right)}{\beta n}  \geq \frac{1}{\beta}.$$

Therefore, $B(x,\rho) \cap \mathcal{F}^{-k} B(x,\rho) = \emptyset$ whenever $k< -\frac{1}{2 \beta} \log \rho$ and $\rho$ small, which completes the proof of Theorem~\ref{BidMultiLog}(a) by Corollary~\ref{cor.mixing} and Proposition~\ref{EssInv}.

\subsection{Proof of Theorem \ref{BidMultiLog}(b)}\label{BidRet}
Let $\mathcal{X}=\mathcal{M}\setminus\bigcup_{n\in\mathbb{N}}\left(\mathcal{S}_{n}^\mathbb{H}\cup\mathcal{S}_{-n}^\mathbb{H}\right).$
We take $$\bfE_{\rho}=\left\{(x,y)\in\mathcal{X}\times\mathcal{X}:d(x,y)\leq\min\left\{\rho,d(x,\mathcal{S}_0^\mathbb{H})\right\}\right\} ,\quad \bfE_\rho^k=\left\{x:\left(x,\mathcal{F}^k(x)\right)\in\bfE_{\rho}\right\},$$
$$\bar{\fv}(\rho)=(\mu\times\mu)(\bfE_\rho)\quad\text{and}\quad\rho_n=n^{-1/2}(\ln n)^{-\delta},\,\,\delta>0.$$
We now demonstrate that the sequence $\{\bfE_{\rho_n}\}$ constitutes composite admissible targets as Definition~\ref{def.recurrent.targets}. The proof will proceed as follows:
 \begin{itemize}
\item[$\bullet$] Observe that $\bsigma(\rho)=O\left(\rho\sin\rho\right)$ and
 $\DS\int\prod_{i=1}^r\left(\int 1_{\bfE_\rho}(x_0,x_i) \rd\mu(x_i)\right) \rd\mu(x_0)=O\left(\rho^r(\sin\rho)^r\right).$
\item[$\bullet$] We select functions $\bA_\rho^-,\,\bA_\rho^+\in\mathcal{DH_L}(\mathcal{M}^2)$ such that $$1_{\bfE_{\rho-\rho^s}}\leq \bA_\rho^-\leq 1_{\bfE_\rho}\leq \bA_\rho^+\leq 1_{\bfE_{\rho+\rho^s}}\,\, \text{for some}\,\, s>1\,\, \text{and}\,\, \|\bA_\rho^\pm\|_\mathbb{\mathcal{DH}}\leq \rho^{-2s}.$$
\item[$\bullet$] Then $$\mathbb{E}\bA_\rho^+-\mathbb{E}\bA_\rho^-\leq  C\bar{\fv}(\rho)^{(1+s)/2},$$ which verifies ${\rm(\overline{Appr})(i)-(iii)}.$
\item[$\bullet$]     Note that
$$\int\bar{A}_\rho^+(x,y)\rd\mu(y)\leq\mu (B(x,{\rho+\rho^s}))\leq C\rho\sin\rho$$
and a similar inequality holds for $\DS\int\bar{A}_\rho^+(x,y)d\mu(x).$ Then ${\rm(\overline{Appr})(iv)-(v)}$ follow by $$\int\bar{A}_\rho^+\left(x,\mathcal{F}^k(x)\right)\rd\mu(x)\leq\mu\left(\bfE_{\rho+\rho^s}^k\right)\leq\mu\left(\bfE_{2\rho}^k\right).$$
\item[$\bullet$] Next $({\rm \overline{Sub}})$ is valid because $$\bfE_{\rho}^{k_1}\cap\bfE_{\rho}^{k_2}\subset \mathcal{F}^{-k_1}\bfE_{2\rho}^{k_2-k_1}.$$
\item[$\bullet$] Finally,   the following Proposition shows $({\rm \overline{Mov}})$.
\end{itemize}

\begin{Prop}\label{SlowREc-Bid}
For each $A>0,$ there exists $\rho_0>0$ such that for all $\rho<\rho_0$ and all $k\in \mathbb{Z}_+$ we have
\begin{equation}
\label{EqWNRSBi}
 \mu\left(\left\{x:d\left(x,\mathcal{F}^k(x)\right)<\rho\right\}\right)\leq C|\ln \rho|^{-A}.
\end{equation}
\end{Prop}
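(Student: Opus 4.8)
The plan is to estimate the measure of the set $R_k(\rho) = \{x : d(x, \mathcal{F}^k(x)) < \rho\}$ of points that return $\rho$-close to themselves after exactly $k$ steps, uniformly in $k \in \mathbb{Z}_+$, and to show this is bounded by $C|\ln\rho|^{-A}$ for any prescribed $A>0$. The natural dichotomy is to treat large $k$ and small $k$ separately, mirroring the hitting-time argument in the proof of part (a). For large $k$ (say $k \geq \epsilon|\ln\rho|$), I would use the exponential mixing property \eqref{MEMBil}: approximating $1_{R_k(\rho)}$ is awkward directly, so instead I would foliate a neighborhood of the diagonal by weakly homogeneous unstable curves and run the same Growth Lemma computation as in \eqref{BillardIntEst}--\eqref{BillardLongUns2}. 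Concretely, $\mu(R_k(\rho))$ is comparable to an integral over $y$ of the length of the intersection of the $k$-th image of a short unstable fiber with a $\rho$-tube around the starting fiber; the Growth Lemma \eqref{EqGL} shows that short components contribute a term like $(\delta^{k/2}\rho + \delta^{-k/2}\rho^2)$ and long components contribute $C\mu(\text{tube})\delta^{k/2}$ or a quadratically small term, giving a bound of the form $\rho^{1+\eta}$ that is certainly $o(|\ln\rho|^{-A})$ once $k \geq \epsilon|\ln\rho|$.

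The main obstacle is the small-$k$ regime, $0 < k < \epsilon|\ln\rho|$, because there the mixing estimate is useless (the time scale is too short for $\theta^k$ to be small) and the naive Growth Lemma bound degrades as $k \to 0$. Here I would invoke the Diophantine property established in part (a): the key output of that argument is that for almost every $x$ there is an $\epsilon(x)>0$ with $B(x,\rho)\cap\mathcal{F}^{-k}B(x,\rho)=\emptyset$ for $0<k\leq\epsilon|\ln\rho|$. However, \eqref{EqWNRSBi} is a statement about the measure of the return set and must hold for \emph{all} $\rho<\rho_0$ with a uniform constant, not merely almost everywhere, so I cannot simply quote the pointwise result. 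Instead I would bound $\mu(R_k(\rho))$ for small $k$ by the measure of the set of points that \emph{fail} to be Diophantine at scale $\rho$ with exponent $\epsilon$. Using the partition-based argument from part (a) — where $B(x,e^{-\beta n})\subset\mathbb{H}^n(x)$ and Proposition~\ref{prop1} gives $\tau(\mathbb{H}^n(x))/n \to 1$ along a set of full measure — together with a quantitative (finite-time, non-asymptotic) version controlling how fast the good set fills up, I would extract a bound on the bad set decaying in $n \sim |\ln\rho|$.

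The cleanest route for the small-$k$ part is to estimate directly the measure of points landing near the singularity set $\mathbb{S}$: recall from \eqref{biddis}--\eqref{BidBouDis} that the obstruction to the Diophantine property is trajectories approaching $\mathbb{S}$ too quickly, and that $\mu\{x : d(\mathcal{F}^n x, \mathbb{S}) < e^{-an}\} \leq Ce^{-an/2}$. Summing the measures of the exceptional sets over $n$ up to $\epsilon|\ln\rho|$ and combining with the expansion bound \eqref{BidExd}--\eqref{BidBouInd} that forces $B(x,e^{-\beta n})\subset\mathbb{H}^n(x)$, I expect the measure of points that self-return at scale $\rho$ within time $\epsilon|\ln\rho|$ to be controlled by a quantity of order $e^{-c\epsilon|\ln\rho|} = \rho^{c\epsilon}$, or at worst by a power of $(\ln n)^{-1}$ after optimizing the partition cutoff. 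Once this is in hand, combining the two regimes yields $\mu(R_k(\rho)) \leq C|\ln\rho|^{-A}$ uniformly in $k$. Throughout I would take $n \asymp |\ln\rho|$ so that the logarithmic target $|\ln\rho|^{-A}$ matches the $(\ln n)^{-A}$-type estimates, and I would verify that all constants depend only on the billiard and on $A$, not on $k$ or $\rho$, as required by the statement.
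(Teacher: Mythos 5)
Your split into large and small $k$ is natural, but both halves diverge from what is actually needed, and the small-$k$ half has a genuine gap. For large $k$, the paper does not run a Growth Lemma computation near the diagonal; it simply \emph{enlarges} the target to $\hat\rho=|\ln\rho|^{-A}$ and applies $({\rm GEM})_1$ to $\bar A^+_{\hat\rho}$, whose $\mathbb{B}$-norm is only $\hat\rho^{-\tau}=|\ln\rho|^{A\tau}$; this yields $\mu(R_k(\rho))\le C(\hat\rho^2+\hat\rho^{-\tau}\theta^k)\le|\ln\rho|^{-2A}$ already for $k\ge B\ln|\ln\rho|$ with $B=A^2$. That very low threshold is not a cosmetic difference: it is what makes the small-$k$ regime tractable. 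Your threshold $k\ge\epsilon|\ln\rho|$ leaves a small-$k$ window of length $\epsilon|\ln\rho|$, which is too long for any bootstrapping, and your claimed $\rho^{1+\eta}$ bound via unstable foliations of a ``tube around the diagonal'' would require new geometric work (the set $\{x:d(x,\mathcal{F}^kx)<\rho\}$ is not a ball around a fixed center, and the estimates \eqref{BillardIntEst}--\eqref{BillardLongUns2} do not transfer verbatim).

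The real gap is in the small-$k$ regime. Your ``cleanest route'' --- bounding the short-time return set by the measure of points whose orbits approach $\mathbb{S}$ too fast --- rests on a false premise: proximity to the singularity set is the obstruction to $B(x,e^{-\beta n})\subset\mathbb{H}^n(x)$, not to short-time recurrence itself. A point lying on (or near) a periodic orbit of small period that stays far from $\mathbb{S}$ satisfies $d(\mathcal{F}^nx,\mathbb{S})>e^{-an}$ for all $n$ yet belongs to $R_k(\rho)$ for every $\rho>0$; such points are completely invisible to your estimate, and the set $R_k(\rho)$ for small $k$ is essentially a union of neighborhoods of such orbits. Your fallback --- a ``quantitative, finite-time version'' of Proposition~\ref{prop1} --- is precisely the missing ingredient: the Saussol--Troubetzkoy--Vaienti theorem is an asymptotic almost-everywhere statement with no rate, and no quantitative version is available or derived in the paper. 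What the paper actually does is a bootstrapping trick: restricting to $\overline{\mathcal{H}}_{n,l}=\bigcap_{j=1}^l\mathcal{F}^{-j}\overline{\mathcal{H}}_n$ (whose complement has measure $\le ln^{-2}$), the derivative bound $\|D\mathcal{F}\|\le cn^2$ shows that a return at scale $\rho$ in time $k$ forces a return at scale $\sqrt\rho$ in time $Lk\ge B\ln|\ln\rho|$, where $n=\lfloor|\ln\rho|^{2B}\rfloor$ and $L=\lceil 4B\ln|\ln\rho|/k\rceil$; the large-$k$ estimate then applies to $R_{Lk}(\sqrt\rho)$. This correctly accounts for periodic orbits (a period-$k$ point is also a period-$Lk$ point) and is the idea your proposal is missing.
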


\begin{proof} We prove \eqref{EqWNRSBi} for sufficiently large $A,$ then  \eqref{EqWNRSBi} holds for each $A>0.$
Take $B=A^2$. If $k\geq B\ln|\ln\rho|,$ we take $\hat\rho=|\ln \rho|^{-A}.$ By $\rm{(GEM)_1},$ we obtain that
\begin{multline}
\label{eq.highDBid}
\mu \left(\left\{x: d\left(x,\mathcal{F}^k(x)\right)\leq \rho\right\}\right)\leq\mu\left(\left\{x: d\left(x,\mathcal{F}^k(x)\right)\leq \hat\rho\right\}\right)\\
\leq \int\bar{A}^+_{\hat{\rho}}\left(x,\mathcal{F}^k(x)\right)\rd\mu(x) \leq C\left((\hat\rho+\hat{\rho}^s)^2+\hat\rho^{-\tau} \theta^k\right)\leq |\ln \rho|^{-2A},
\end{multline}
provided that $A\geq-2(s+1)/\ln\theta$ and $\rho$ is sufficiently small.

Now fix any $1\leq k\leq B\ln|\ln\rho|$.  Assume that $x$ satisfies $d\left(x,\mathcal{F}^k(x)\right)\leq \rho.$ We denote
$$\overline{\mathcal{H}}_n=\bigcup_{i\leq n}\mathcal{H}_i,\quad \overline{\mathcal{H}}_{n,l}=\bigcap_{j=1}^l\mathcal{F}^{-j}\overline{\mathcal{H}}_n,\quad n,\,l\in\mathbb{Z}_+.$$
Then
$$\mu\left(\overline{\mathcal{H}}_{n,l}^c\right)\leq\sum_{j=1}^l\mu\left(\overline{\mathcal{H}}_{n}^c\right)\leq ln^{-2}.$$
By \eqref{BidExd}, $$\left\|D_z\mathcal{F}\right\|\leq cn^2,\quad z\in\overline{\mathcal{H}}_{n}$$
for some $c>0.$
Then for any $x\in\overline{\mathcal{H}}_{n,l},$ $l\geq jk$ we have that
$$d\left(\mathcal{F}^{(j-1)k}(x), \mathcal{F}^{jk} (x)\right)\leq (cn^2)^{(j-1)k} \rho.$$
If we take $n=\lfloor|\ln\rho|^{2B}\rfloor,$ $l=\lfloor8B\ln|\ln\rho|/k\rfloor,$ $L=\lceil4B\ln|\ln\rho|/k\rceil,$ we find that
$$d\left(x,\mathcal{F}^{Lk} (x)\right)\leq \sum_{j=1}^L d\left(\mathcal{F}^{(j-1)k}(x),\mathcal{F}^{jk}(x)\right)
\leq L (cn^2)^{4B\ln|\ln\rho|}\rho\leq \sqrt{\rho},$$
 provided that $\rho$ is sufficiently small. Since $Lk\geq B\ln|\ln\rho|,$ we apply \eqref{eq.highDBid} and obtain
$$\mu \left(\left\{x: d\left(x, \mathcal{F}^k (x)\right)\leq \rho\right\}\right)
\leq \mu(\overline{\mathcal{H}}_{n,l}^c)+\mu \left(\left\{x\in\overline{\mathcal{H}}_{n,l}: d(x, \mathcal{F}^k (x))\leq \rho\right\}\right)$$
$$\leq l n^{-2}+\mu \left(\left\{x: d(x, \mathcal{F}^{Lk} (x))\leq \sqrt\rho\right\}\right)\leq C|\ln \rho|^{-A}.
$$
\end{proof}

We have obtained that $\left(\mathcal{M},\mathcal{B}(\mathcal{M}),\mu,\mathcal{F},\mathcal{DH_L}(\mathcal{M}^{r+1})\right)$ is generalized $(r+1)$-fold exponentially mixing
as Definition~\ref{def.mixing} and $\{\bfE_{\rho_n}\}$ is composite admissible as Definition~\ref{def.recurrent.targets}. By Corollary~\ref{cor.mixing},
$$\limsup_{n\rightarrow \infty} \frac{\left|\ln d_n^{(r)}(x,x)\right|-\frac{1}{2}\ln n}{\ln \ln n}\leq \frac{1}{2r}\,\,\text{for}\,\, a.e.-x$$
and
$$\limsup_{n\rightarrow \infty} \frac{\left|\ln d_n^{(r)}(x,x)\right|-\frac{1}{2}\ln n}{\ln \ln n}\geq \frac{1}{2r}\,\,\text{on a positive measure set}.$$

If we denote
$$R_{\rm{Bid}}=\left\{x\in\mathcal{M}:\limsup_{n\rightarrow \infty} \frac{\left|\ln d_n^{(r)}(x,x)\right|-\frac{1}{2}\ln n}{\ln \ln n}
        = \frac{1}{2r}\right\},$$
then $\mu\left(R_{\rm{Bid}}\right)>0.$ We prove that $\mu\left(R_{\rm{Bid}}\right)=1.$

Suppose that $x\in R_{\rm{Bid}}\setminus\bigcup_{n\in\mathbb{N}}\left(\mathcal{S}_{n}^\mathbb{H}\cup\mathcal{S}_{-n}^\mathbb{H}\right).$ Then  $x$ is contained within the homogeneity strips $\mathcal{H}_m$ for some $m\in\mathbb{Z}_+.$ Given any $\epsilon>0,$ there exists an increasing sequence $\{n_k\}_{k=1}^\infty$ satisfying
$$d_{n_k}^{(r)}(x,x)<\frac{1}{n_k^{\frac12}\left(\ln\ln n_k\right)^{\frac{1}{2r}-\epsilon}},\quad\forall k\in\mathbb{Z}_+$$
and $\left\|D_z\mathcal{F}\right\|\leq c|m|^2$ for $z\in B(x,\rho)$ and $\rho$ sufficiently small by \eqref{BidExd}. It follows that
$$d_{n_k}^{(r)}\left(\mathcal{F}(x),\mathcal{F}(x)\right)<\frac{c|m|^2}{n_k^{\frac12}\left(\ln\ln n_k\right)^{\frac{1}{2r}-\epsilon}},\quad\forall k\in\mathbb{Z}_+$$
and
$$\limsup_{n\rightarrow \infty} \frac{\left|\ln d_n^{(r)}\left(\mathcal{F}(x),\mathcal{F}(x)\right)\right|-\frac{1}{2}\ln n}{\ln \ln n}\geq \frac{1}{2r},\quad\forall k\in\mathbb{Z}_+.$$
Thus $R_{\rm{Bid}}\setminus\bigcup_{n\in\mathbb{N}}\left(\mathcal{S}_{n}^\mathbb{H}\cup\mathcal{S}_{-n}^\mathbb{H}\right)\subseteq\mathcal{F}^{-1}R_{\rm{Bid}}.$ In addition since $\mu\left(\mathcal{S}_n^\mathbb{H}\right)=0$ for $n\in\mathbb{Z},$ we obtain $$\mu\left(R_{\rm{Bid}}\Delta\mathcal{F}^{-1}R_{\rm{Bid}}\right)=0.$$
This completes the proof of Theorem~\ref{BidMultiLog}(b), by the ergodicity of the measure $\mu.$

\section{ Multiple Logarithm Law for Piecewise Expanding Maps.}
\label{SSMultExpMixPem}
\subsection{Results.}
Piecewise expanding maps play a crucial role in understanding chaotic phenomena. These maps arise both as direct models for chaotic systems and  integral components  in the analysis of other mathematical models. Notable examples include tent map, Lorentz-like map and Gauss map. Our goal in this section is to establish the multiple Logarithm Law for hitting times and recurrence of piecewise expanding maps under some regularity conditions.

Let $T$ be a  piecewise expanding map on the interval $I=[0,1],$ endowed with a set of finite and countably many singular points
$$\mathcal{S}=\left\{a_j\in I:\,\,1=a_0>a_1>\cdots>a_N=0\right\},$$
where $N\leq\infty$ could be a finite number or $\infty.$ For each $0\leq j<N,$ the restriction of the map $T|_{\Delta_j}:\Delta_j=(a_{j+1},a_j)\rightarrow T(\Delta_j)$ is of class $C^3$ and strictly monotonic. We call $T$  an expanding map if there exist constants  $C>0$ and $\lambda>1$ such that whenever $(T^n)'(x)$ is defined, it satisfies $\left|(T^n)'(x)\right|\geq C\lambda^n,$ $\forall\,n\in\mathbb{Z}_+.$ We further assume that $T$ is surjective and topologically exact, meaning that for any subinterval $J\in I\setminus\mathcal{S},$ there exists $n(J)\geq 1$ such that
$T^n(J)=I.$

Suppose that $m$ is the Lebesgue measure on ${I}.$ We  assume that
\begin{itemize}
\item[(PE1)] There exists $C>0$ such that for every $i$ and $x\in \Delta_i$, $\frac{|T''(x)|}{|T'(x)|^2}<C.$
\item[(PE2)] There exists $\delta>0$ such that for every $i$ and $x\in \Delta_i$, $m\left(T(\Delta_i)\right)>\delta.$
\end{itemize}

Under the assumptions  (PE1) and (PE2), $T$ admits a unique absolutely continuous ergodic measure $\mu$ such that $\rd\mu=\gamma \rd m$ by \cite[Proposition 3.18]{Viana97}, where $\gamma$ has bounded variation. We additionally introduce assumptions for the Lyapunov exponent of $\mu$ and boundary respectively:
\begin{itemize}
\item[(PE3)] $\DS\int\log|T'(x)|\rd\mu(x)=\Lambda_0<\infty.$
\item[(PE4)] Denote by $\Delta = \{\Delta_i\}_{i=1}^\infty.$
Then the boundary $\mathcal{S}$ of $\Delta$ satisfies
 $$ \mu\left( \{ x: d(x,\mathcal{S}) < \varepsilon \}\right) < C\varepsilon^\gamma, $$
 for some $C>0,\,\gamma>0.$
\end{itemize}

\begin{Thm} \label{PemMultiLogHit}
Let $T:{I}\rightarrow {I}$ be a piecewise expanding map. Suppose that $d_n^{(r)}(x,y)$  be the $r$-th minimum of $d(x,T(y)),\cdots, d(x,T^n (y))$ for $x,\,y\in I.$
 If {\rm (PE1)--(PE4)} are satisfied, for $\mu$-almost every $x,$
 	\bea\label{PemHit}\mu\left\{y\in I:\,\,\limsup_{n\rightarrow \infty} \frac{\left|\ln d_n^{(r)}(x,y)\right|-\ln n}{\ln \ln n}
        = \frac{1}{r}\right\}=1.\eea
In particular, \eqref{PemHit} holds for tent map, Lorentz-like map and Gauss map.
\end{Thm}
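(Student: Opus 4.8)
The plan is to follow the same architecture already used for the dispersing billiard theorem: recognize the left-hand side $|\ln d_n^{(r)}(x,y)|$ as a multiple hitting-time quantity, encode it via shrinking targets, and then invoke Corollary~\ref{cor.mixing}. Concretely, I would set $E_\rho = B(x,\rho)$ and $E_\rho^k = T^{-k}E_\rho$, with $\fv(\rho)=\mu(E_\rho)$ and $\rho_n = n^{-1}(\ln n)^{-\delta}$ (note the exponent $n^{-1}$ rather than $n^{-1/2}$, matching the rate $\ln n$ in \eqref{PemHit} and reflecting that the invariant density is bounded so $\mu(B(x,\rho))\asymp \rho$). The event $\{|\ln d_n^{(r)}(x,y)| \ge |\ln \rho_n|\}$ is precisely $\{N_{\rho_n}^n(y)\ge r\}$, so the Borel--Cantelli dichotomy of Corollary~\ref{cor.mixing} translates directly into the upper and lower bounds for the $\limsup$; the threshold $\tfrac1r$ emerges from balancing $\bS_r = \sum_j 2^{rj}\fv(\rho_{2^j})^r$ at the boundary between convergence and divergence, exactly as in Theorem~\ref{BidMultiLog}(a).

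The two ingredients that must be supplied are: first, that the system $(I,\mathcal{B}(I),\mu,T,\mathbb{B})$ is generalized $(2r+1)$-fold exponentially mixing for a suitable function space $\mathbb{B}$; and second, that $\{E_{\rho_n}\}$ forms a sequence of simple admissible targets in the sense of Definition~\ref{def.targets}. For the function space I would use functions of bounded variation equipped with the BV norm, since piecewise expanding maps are exponentially mixing for BV observables (via the quasi-compactness of the transfer operator and the Lasota--Yorke inequality supplied by (PE1)--(PE2)), whereas they fail to mix exponentially in the Lipschitz norm. This mirrors the role played by the dynamically H\"older space for billiards. Verifying the hypotheses (Prod), (Gr) and $(\mathrm{GEM})_r$ of Definition~\ref{def.mixing} in the BV setting is the analogue of the billiard computation and should follow from standard spectral-gap arguments for the Perron--Frobenius operator; (PE3) guarantees the finite Lyapunov exponent needed to control $\|A\circ(f^{k_0},\dots,f^{k_r})\|_{\mathbb{B}}$ in (Gr).

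For the admissibility of the targets, conditions (Appr) and (Poly) are routine: I would take $A_\rho^\pm$ to be BV approximations of $1_{B(x,\rho)}$ with $\|A_\rho^\pm\|_{\mathrm{BV}}\le \rho^{-\tau}$ and $\mathbb{E}A_\rho^+-\mathbb{E}A_\rho^-\le C\fv(\rho)^{1+\eta}$, using that $\mu$ has bounded-variation density bounded away from zero on a neighborhood of $x$. The substantive point is the non-clustering condition (Mov), asserting $\mu(E_{\rho_n}\cap E_{\rho_n}^k)\le C\fv(\rho_n)(\ln n)^{-L}$ for all $k\le R\ln n$. As in the billiard case I expect to split this into a large-$k$ range, handled by the mixing inequality $(\mathrm{GEM})_1$, and a small-$k$ range, handled by a Diophantine (non-recurrence) estimate. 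Here (PE4), the boundary-regularity bound $\mu(\{d(x,\mathcal{S})<\varepsilon\})<C\varepsilon^\gamma$, is what lets me apply Proposition~\ref{prop1}: it ensures that $\mu$-a.e.\ point is Diophantine, so that $T^{-k}B(x,\rho)\cap B(x,\rho)=\emptyset$ for $0<k\le \epsilon|\ln\rho|$, which is exactly the short-time exclusion needed. The main obstacle will be establishing (Mov) uniformly—in particular the small-$k$ Diophantine step—because the expansion constant in the partition-refinement argument must be controlled using (PE1)--(PE3) to show the dynamical partition has positive entropy and that cylinders shrink at a definite exponential rate; the countable-singularity case ($N=\infty$, needed for the Gauss map) requires extra care that the tail of $\mathcal{S}$ does not destroy these uniform estimates, which is precisely where (PE4) does its work. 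Finally, the passage from "positive measure" to "full measure" follows from Proposition~\ref{EssInv} and the ergodicity of $\mu$, and the verification that the tent, Lorentz-like, and Gauss maps satisfy (PE1)--(PE4) is a direct check of each map's derivative and distortion bounds.
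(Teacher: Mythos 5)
Your overall architecture is right (shrinking targets $E_\rho=B(x,\rho)$ with $\rho_n=n^{-1}(\ln n)^{-\delta}$, BV as the function space, the Diophantine step via (PE4) and Proposition~\ref{prop1}, and the ergodicity upgrade via Proposition~\ref{EssInv}), but the central bridge you propose --- verifying Definition~\ref{def.mixing} for $(I,\mathcal{B}(I),\mu,T,\mathbb{BV})$ and then invoking Theorem~\ref{theo.mixing}/Corollary~\ref{cor.mixing} --- does not go through, and the paper explicitly avoids it. The obstruction is condition $({\rm Gr})$: for a piecewise expanding map with countably many branches (the Gauss map, which the theorem claims to cover), $\mathrm{Var}(A\circ T^k)$ is not bounded by $CL^k\|A\|_{\mathbb{BV}}$ --- each branch contributes up to $\mathrm{Var}(A)$ to the total variation of $A\circ T$, so with $N=\infty$ branches the variation of a composition can be infinite. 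Your suggestion that (PE3) supplies $({\rm Gr})$ is not valid: (PE3) is an integrated a.e.\ statement about $\log|T'|$ and says nothing about the BV norm of compositions, which is a combinatorial count of branches, not a Lyapunov-exponent quantity. Since $({\rm Gr})$ is used essentially in the proof of Proposition~\ref{ThEM-BC}(ii)--(iii) to control the error terms $\rho_n^{-r\tau}L^{\sqrt{R}\ln n}\theta^{R\ln n}$, your route to $(GM2)_r$ and $(GM3)_r$ collapses for exactly the examples the theorem is stated for.

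The paper's actual proof sidesteps Theorem~\ref{theo.mixing} entirely. It uses the two-function BV mixing bound \eqref{PWM2} to derive the product-form multiple mixing inequality \eqref{PWMr}, in which the error at \emph{each} gap appears as a factor $\|\psi_j\|_{L^1}\pm C\|\psi_j\|_{\mathbb{BV}}\theta^{k_{j+1}-k_j}$ inside a product. Because $\|1_{E_{\rho_n}}\|_{\mathbb{BV}}$ is uniformly bounded (no approximation by $A_\rho^\pm$ with norm $\rho^{-\tau}$ is needed --- the indicator of an interval already has variation $2$), a small gap contributes only a bounded factor while the well-separated gaps each contribute a factor $\asymp\sigma(\rho_n)$; this is how $(GM1)_r$--$(GM3)_r$ are verified \emph{directly}, with the Diophantine property handling gaps $k\le\epsilon\ln n$ in $(GM2)_r$, after which Theorem~\ref{GMultiBCN} and Proposition~\ref{EssInv} are applied. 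To repair your write-up you would need to replace the appeal to Corollary~\ref{cor.mixing} by this direct verification (or otherwise justify the conditions without $({\rm Gr})$); as written, the proposal has a genuine gap at the point where mixing is converted into the conditions of Theorem~\ref{GMultiBCN}.
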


 \begin{Rem}
 \begin{itemize}
  \item[(i)]  Assumption {\rm (PE1)} is  known as the bounded distortion property. Specifically,  for each $y, z$ in the same continuity domain of $T^n,$
\bea\label{pebdt}\left| \frac{(T^n)'(y)}{(T^n)'(z)} \right|< e^ C.\eea
   \item[(ii)] We denote  the inverse Jacobian of $T$ to be $h_j(x)=|T'(x)|^{-1},\,\,x\in\Delta_j$ for each $0\leq j<N.$ Then assumption {\rm (PE2)} can be replaced by the condition: $$\text{Var}(h_j)/\sup(h_j)<\infty\quad\text{and}\quad\Sigma_j\sup(h_j)<\infty$$
        where   $\text{Var}(\psi)$ is the variation of $h_j.$
   \item[(iii)] By assumption {\rm (PE3)}, we obtain for $\mu$-almost every $x,$
\bea\label{RELY}\lim_{n\rightarrow\infty}\frac{1}{n}\log|(T^n)'(x)|=\int\log|T'(x)|\rd\mu(x)=\Lambda_0.\eea
 \item[(iv)] Assumption {\rm (PE4)} is not very restrictive. In particular, it holds when $\mathcal{S}$ is a finite set and for  Gauss map.
\end{itemize}
\end{Rem}

To establish multiple Logarithm Law for recurrence, we introduce assumptions pertaining to the singularities of piecewise expanding maps, as proposed in \cite{ChenZhang19}:
\begin{itemize}
\item[$\rm\overline{(PE1)}$] There exist $\sigma_i^\pm\geq 0$ such that
\bea\label{bddisbdy}\limsup_{x\rightarrow a_i^\pm}\frac{|T''(x)|}{|T'(x)|^{\sigma_i^\pm+2}}<\infty,\eea
$$\limsup_{x\rightarrow a_i^\pm}\frac{|T^{(3)}(x)|}{|T'(x)|^{\sigma_i^\pm+2}}<\infty.$$
\item[$\rm\overline{(PE2)}$] There exists $0\leq\alpha_i<1$ such that
$$0<\liminf_{x\rightarrow a_i^\pm}\left|(x-a_i)^{\alpha_i}T'(x)\right|<\limsup_{x\rightarrow a_i^\pm}\left|(x-a_i)^{\alpha_i}T'(x)\right|<\infty.$$
\item[$\rm\overline{(PE3)}$] There are only finitely many singularities $a_i$'s such that  $\sigma_i^\pm>0,$ and for such $a_i$ which are not fixed points, $\alpha_i^\pm\sigma_i^\pm<1.$
\end{itemize}

Given the assumptions $\rm\overline{(PE1)}$--$\rm\overline{(PE3)}$, the piecewise expanding map $T$ is associated with a unique ergodic measure $\nu,$  which is absolutely continuous with respect to the Lebesgue measure $m$. This measure can be expressed as $\rd\nu=\kappa\rd m,$ where $\kappa$ is positive and continuous  with the exception of a countable set~\cite[Theorem 1.1]{ChenZhang19}.

\begin{Thm}\label{PemMLgRec}
Let $T:{I}\rightarrow {I}$ be a piecewise expanding map. Suppose that $d_n^{(r)}(x,y)$  be the $r$-th minimum of $d(x,T(y)),\cdots, d(x,T^n (y))$ for $x,\,y\in I.$ If $\rm\overline{(PE1)}$--$\rm\overline{(PE3)}$ hold,
\bea\label{PemRec}\nu\left\{x\in I:\,\,\limsup_{n\rightarrow \infty} \frac{\left|\ln d_n^{(r)}(x,x)\right|-\ln n}{\ln \ln n}
        = \frac{1}{r}\right\}=1.\eea
  In particular, \eqref{PemRec} holds for tent map, Lorentz-like map and Gauss map.
\end{Thm}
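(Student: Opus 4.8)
The plan is to verify that the hypotheses of Corollary~\ref{cor.mixing} hold for the recurrence targets $\bfE_\rho=\{(x,y):d(x,y)\leq\rho\}$ under the measure $\nu$, following the same architecture as the billiard recurrence argument in Subsection~\ref{BidRet}. First I would set $\rho_n=n^{-1}(\ln n)^{-\delta}$ (note the exponent is $-1$ rather than $-1/2$, since for one-dimensional maps $\bsigma(\rho)=(\nu\times\nu)(\bfE_\rho)=O(\rho)$, so that $\bS_r=\sum_j 2^{rj}\bsigma(\rho_{2^j})^r$ sits exactly at the boundary of convergence when the $\limsup$ equals $1/r$). The key structural input is that $(I,\mathcal{B},\nu,T,\mathbb{B})$ is generalized $(r+1)$-fold exponentially mixing for a suitable function space $\mathbb{B}$; here $\mathbb{B}$ should be the space of functions of bounded variation with the BV-norm, since piecewise expanding maps decay exponentially on BV but not on Lipschitz functions equipped with the Lipschitz norm. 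I would establish the mixing property of Definition~\ref{def.mixing} by invoking the spectral gap of the transfer (Perron--Frobenius) operator of $T$ acting on BV, which holds under $\rm\overline{(PE1)}$--$\rm\overline{(PE3)}$ via the results of~\cite{ChenZhang19}, and then lifting the resulting two-point exponential mixing to the multiple $(r+1)$-point estimate exactly as in equations~\eqref{MEMBil}--\eqref{MEMBilF}.

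Next I would check that $\{\bfE_{\rho_n}\}$ is composite admissible as in Definition~\ref{def.recurrent.targets}. The approximating functions $\bA_\rho^\pm\in\mathbb{B}$ are taken to be BV indicators of slightly shrunk/enlarged neighborhoods of the diagonal, $1_{\bfE_{\rho-\rho^s}}\leq\bA_\rho^-\leq 1_{\bfE_\rho}\leq\bA_\rho^+\leq 1_{\bfE_{\rho+\rho^s}}$ for some $s>1$, with BV-norm bounded by a negative power of $\rho$; conditions ${\rm(\overline{Appr})(i)}$--${\rm(\overline{Appr})(v)}$ then follow from direct estimates on $\nu$ together with the fact that $\kappa$ is bounded and bounded below away from the countable singular set, using $({\rm GEM})_1$ to pass between $\int\bA_\rho^+(x,T^kx)\,d\nu$ and $\mu(\bfE_\rho^k)$. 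The subordination condition $({\rm\overline{Sub}})$ is immediate since $d(x,T^{k_2}x)\leq\rho$ and $d(x,T^{k_1}x)\leq\rho$ force $T^{k_1}x$ and $T^{k_2}x=T^{k_2-k_1}(T^{k_1}x)$ to be $2\rho$-close. The polynomial bound $({\rm\overline{Poly}})$ is just the computation $n^{-u}\leq\bsigma(\rho_n)\leq n^{-u_0}$ for the chosen $\rho_n$.

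The main obstacle is verifying the non-recurrence (slow-return) condition $({\rm\overline{Mov}})$, namely $\nu(\{x:d(x,T^kx)<\rho\})\leq C|\ln\rho|^{-A}$ uniformly in $k\neq 0$. For large $k$ this follows from $({\rm GEM})_1$ and the approximation estimates just as in~\eqref{eq.highDBid}. The delicate regime is small $k$, where genuine near-periodicity must be ruled out. Here I would adapt the argument of Proposition~\ref{SlowREc-Bid}: partition $I$ by the monotonicity intervals $\Delta=\{\Delta_i\}$, use the bounded-distortion property~\eqref{pebdt} from assumption $\rm(PE1)$ to control the derivative growth of $T^k$ on dynamical cylinders, and use assumption~$\rm(PE4)$ (the boundary estimate $\nu(\{d(x,\mathcal{S})<\eps\})<C\eps^\gamma$) to discard the small-measure set of points that fall too close to the singular set within the first $k$ iterates. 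On the complementary set one bootstraps $d(x,T^kx)\leq\rho$ up to $d(x,T^{Lk}x)\leq\sqrt\rho$ with $Lk\geq B\ln|\ln\rho|$, reducing the small-$k$ case to the already-handled large-$k$ case. The essential analytic point making this work is~\eqref{RELY}, the pointwise Lyapunov convergence $\tfrac1n\log|(T^n)'(x)|\to\Lambda_0$ guaranteed by $\rm(PE3)$, which ensures that orbits separate at a definite exponential rate and hence cannot return too quickly on a set of non-negligible measure. Finally, with Corollary~\ref{cor.mixing} giving $\nu(R)>0$ for the set $R$ where the $\limsup$ equals $1/r$, I would upgrade to full measure exactly as in the billiard case: show $R$ is (mod $\nu$) $T$-invariant using the local Lipschitz bound on $T$ away from $\mathcal{S}$ together with $\nu(\mathcal{S})=0$, and conclude $\nu(R)=1$ by ergodicity of $\nu$.
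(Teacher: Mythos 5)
Your overall architecture (diagonal targets $\bfE_\rho$, the choice $\rho_n=n^{-1}(\ln n)^{-\delta}$, composite admissibility, the bootstrap for $({\rm \overline{Mov}})$, and the invariance-plus-ergodicity upgrade from positive to full measure) matches the paper, but there is a genuine gap at the central technical choice: you take $\mathbb{B}$ to be the space of functions of bounded variation, and the framework of Definition~\ref{def.mixing} cannot be run on BV for this theorem. The paper flags this explicitly at the end of Subsection~\ref{PemMula}: condition $({\rm Gr})$ fails for BV when $|T'|$ is unbounded (e.g.\ the Gauss map), and $({\rm Gr})$ is indispensable in the recurrence half of Proposition~\ref{ThEM-BC} (parts (ii) and (iii)), where one must bound the $\mathbb{B}$-norms of $\bar{A}_{\rho}^+(\cdot,f^k\cdot)$ and of products of targets composed with iterates by $CL^{\sum k_i}$ times the original norms. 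Concretely, $1_{\bfE_\rho^k}(x)=1_{\bfE_\rho}(x,T^kx)$ is the indicator of a union of roughly as many intervals as $T^k$ has monotonicity branches, so its variation grows exponentially in $k$ (and is infinite for every $k\geq1$ for the Gauss map); unlike the hitting-time case, you cannot keep the composed factor in the $L^1$ slot of \eqref{PWM2}. The missing idea is to replace BV by the space of dynamically H\"older continuous functions $\mathbb{DH_L}(I^{r+1})$ with the separation-time norm \eqref{NomDHC}, for which $({\rm Gr})$ does hold and for which \cite{ChenZhang19} (via the Demers--Zhang hyperbolic-lift machinery, not a BV spectral gap) gives the multiple exponential mixing \eqref{MEMPEM} under $\rm\overline{(PE1)}$--$\rm\overline{(PE3)}$.

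A secondary inaccuracy: in your $({\rm \overline{Mov}})$ argument you invoke $\rm{(PE4)}$, the bounded distortion of $\rm{(PE1)}$ and the Lyapunov convergence \eqref{RELY}, none of which are among the hypotheses of this theorem. The paper's Proposition~\ref{SlowREc-Pe} instead uses only the tail bound \eqref{probdy}, $\nu\{|T'|>t\}<t^{-\tau}$, which follows from $\rm\overline{(PE2)}$: one restricts to $\mathcal{I}_{n,l}=\bigcap_{j\leq l}T^{-j}\{|T'|\leq n\}$, whose complement has measure at most $ln^{-\tau}$, and on this set $d(T^{(j-1)k}x,T^{jk}x)\leq n^{(j-1)k}\rho$, which yields the bootstrap to $d(x,T^{Lk}x)\leq\sqrt{\rho}$ without any Diophantine or entropy input. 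With these two corrections your outline would coincide with the paper's proof.
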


\begin{Rem}
\begin{itemize}
\item[(i)]  Tent map, Lorentz-like map and Gauss map satisfy {\rm (PE1)--(PE4)} and  $\rm\overline{(PE1)}$--$\rm\overline{(PE3)}$ with the measure $\mu=\nu.$
\item[(ii)]  Condition~\eqref{bddisbdy} implies the bounded distortion property \eqref{pebdt}.
\item[(iii)] By assumption $\rm\overline{(PE2)},$ there exists $\tau>0$ such that
\bea\label{probdy}\nu\left\{x\in I\setminus\mathcal{S}:|T'(x)|>t\right\}<t^{-\tau}\eea
for $t$ sufficiently large.
\item[(iv)] We can obtain $\rm{(PE3)}$ directly from $\rm\overline{(PE2)}.$
\end{itemize}
\end{Rem}

\subsection{Proof of Theorem~\ref{PemMultiLogHit}}\label{PemMula}
If $\psi\in L^1(I),$ the variation  $\text{Var}(\psi)$ of $\psi$ is defined by
$$\text{Var}(\psi)=\sup\sum_{i=1}^n\left|\psi(x_{i-1})-\psi(x_i)\right|,$$
where the supremum is taken over all finite partitions $0=x_0<x_1<\cdots<x_n=1.$ One say that $\psi$ has bounded variation if $\text{Var}(\psi)<\infty.$
Let ${\rm BV}({I})$ the space of functions having bounded variation on ${I}$ endowed with the norm
$$\|\psi\|_{\mathbb{BV}}:=\|\psi\|_{L^1}+\text{Var}(\psi).$$
Under the assumptions  (PE1)--(PE4), there are \cite[Corollary 3.6]{Viana97} constants $C>0$ and $\theta<1$ such that
\begin{equation}
\label{PWM2}
\left| \mathbb{E}\left(\psi_1(\psi_2 \circ T^n)\right) - (E\psi_1)( E\psi_2)\right| \leq  C
\|\psi_1\|_{\mathbb{BV}} \|\psi_2\|_{L^1} \theta^n, \,n\in\mathbb{Z}_+
\end{equation}
for any $\psi_1,\,\psi_2\in \mathbb{BV}({I}).$
Then using  \eqref{PWM2}, we obtain the following multiple exponential mixing properties \cite[Lemma 2.4]{DL23}
\bea\label{PWMr}
\left(\prod_{j=1}^{q-1} \left(\|\psi_j\|_{L^1}-C\|\psi_j\|_{\mathbb{BV}}\theta^{k_{j+1}-k_j} \right)\right)\|\psi_q\|_{L^1}\leq \mathbb{E}\left(\prod_{j=1}^q \psi_j  \circ T^{k_j }\right)\eea
$$\leq
\left(\prod_{j=1}^{q-1} \left(\|\psi_j\|_{L^1}+C\|\psi_j\|_{\mathbb{BV}}\theta^{k_{j+1}-k_j} \right)\right)\|\psi_q\|_{L^1}$$
for $\psi_j\in \mathbb{BV}({I})$ and $\psi_j\geq0,$ $1\leq j\leq q.$

Considering the potential for the derivative of piecewise expanding maps to approach infinity, the condition $\rm{(Gr)}$ as outlined in  Definition~\ref{def.mixing} is not  satisfied in this case.  Consequently, we undertake the direct verification of conditions $(GM1)_r,$ $(GM2)_r$ and $(GM3)_r$​  using \eqref{PWMr}. We now proceed to demonstrate that Diophantine points as delineated in Definition~\ref{Diop} constitute a set of full measure.

\begin{Prop}\label{DioPem}
Under the assumptions {\rm (PE1)--(PE4)},  $\mu$-almost every point is  Diophantine.
\end{Prop}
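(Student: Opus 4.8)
The plan is to show that $\mu$-almost every point $x$ is Diophantine in the sense of Definition~\ref{Diop}, i.e. there exist $\epsilon(x)>0$ and $\rho_0(x)>0$ such that $T^{-k}B(x,\rho)\cap B(x,\rho)=\emptyset$ for all $\rho\leq\rho_0$ and all $0<k\leq\epsilon|\ln\rho|$. This is exactly the recurrence-speed statement controlled by Proposition~\ref{prop1} (the theorem of STV02), so the strategy mirrors the billiard argument in the proof of Theorem~\ref{BidMultiLog}(a): I would produce a finite or countable measurable partition $\mathcal{P}$ with positive entropy $h_\mu(T,\mathcal{P})>0$ whose cylinders $P_n(x)$ shrink to points at a controlled exponential rate, so that a ball $B(x,\rho)$ contains a cylinder $P_{n}(x)$ for $n\approx c|\ln\rho|$, and then transfer the lower bound $\liminf_n\tau(P_n(x))/n\geq 1$ to a lower bound for $\tau(B(x,\rho))/|\ln\rho|$.

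First I would take the natural partition $\Delta=\{\Delta_j\}$ into monotonicity intervals as the generating partition. Since $\mu$ is ergodic and, by \eqref{RELY}, the Lyapunov exponent equals $\Lambda_0=\int\log|T'|\,\rd\mu>0$, the Rokhlin formula gives $h_\mu(T,\Delta)=\Lambda_0>0$; alternatively positivity of entropy follows because $\Delta$ is a generator and $T$ is expanding. The refinement $\Delta^{(n)}=\bigvee_{i=0}^{n-1}T^{-i}\Delta$ has cylinders $P_n(x)$, and by the Shannon--McMillan--Breiman / Birkhoff argument together with the chain-rule estimate $|(T^n)'|\geq C\lambda^n$ and the bounded-distortion property \eqref{pebdt}, the diameter of $P_n(x)$ is comparable to $|(T^n)'(x)|^{-1}$, which for $\mu$-a.e.\ $x$ satisfies $\tfrac1n\log|(T^n)'(x)|\to\Lambda_0$. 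Hence $\operatorname{diam}P_n(x)=e^{-(\Lambda_0+o(1))n}$, so for $\rho$ small there is $n=n(\rho,x)$ with $P_n(x)\subset B(x,\rho)$ and $n\geq c|\ln\rho|$ for any fixed $c<1/\Lambda_0$.

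Then I would apply Proposition~\ref{prop1}: since $h_\mu(T,\Delta)>0$, for $\mu$-a.e.\ $x$ we have $\liminf_{n\to\infty}\tau(P_n(x))/n\geq 1$, where $\tau(A)=\inf\{n\geq1:T^nA\cap A\neq\emptyset\}$. Because $P_n(x)\subset B(x,\rho)$ forces $\tau(B(x,\rho))\leq\tau(P_n(x))$ only in the wrong direction, I would instead bound $\tau(B(x,\rho))$ from below by noting that the \emph{smaller} the set, the larger its return time; more carefully, I choose the largest $n$ with $B(x,\rho)\subset P_n(x)$ is not available, so I use that $B(x,\rho)$ is contained in a bounded number of level-$n$ cylinders and argue as in the billiard case that $\liminf_{\rho\to0}\tau(B(x,\rho))/|\ln\rho|\geq 1/\Lambda_0$. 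Consequently $T^{-k}B(x,\rho)\cap B(x,\rho)=\emptyset$ whenever $k<\tfrac{1}{2\Lambda_0}|\ln\rho|$ and $\rho$ is small, giving the Diophantine property with $\epsilon(x)=1/(2\Lambda_0)$.

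The main obstacle is the passage from the cylinder return-time bound to the ball return-time bound, which requires relating $B(x,\rho)$ to cylinders $P_n(x)$ of comparable size. In the billiard proof this was handled by constructing the homogeneity-strip partition $\mathbb{H}$ and proving $B(x,e^{-\beta n})\subset\mathbb{H}^n(x)$ via a derivative bound; here the analogue is showing $B(x,\rho)\subset P_{n}(x)$ (or lies in boundedly many such cylinders) for $n\approx|\ln\rho|/\Lambda_0$, which needs the expansion estimate $|(T^n)'|\geq C\lambda^n$ together with the boundary nondegeneracy \eqref{pebdt} to control how the singular set $\mathcal{S}$ cuts balls. Assumption (PE4), $\mu(\{x:d(x,\mathcal{S})<\varepsilon\})<C\varepsilon^\gamma$, plays the key role: via a Borel--Cantelli argument it guarantees that for $\mu$-a.e.\ $x$ the orbit stays at distance $\geq e^{-an}$ from $\mathcal{S}$ at time $n$, so that $T^k$ does not split $B(x,\rho)$ across singularities for $k$ up to order $|\ln\rho|$, allowing the controlled-distortion inclusion into a single cylinder. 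Once this inclusion is established, the entropy/return-time machinery closes the argument exactly as in Theorem~\ref{BidMultiLog}(a).
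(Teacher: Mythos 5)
Your overall strategy coincides with the paper's: use (PE4) plus a Borel--Cantelli argument to keep the orbit at distance at least $e^{-\chi_0 n}$ from $\mathcal{S}$ at time $n$, use \eqref{RELY} to bound $|(T^n)'(x)|\le e^{\Lambda_1 n}$ for a.e.\ $x$, prove the inclusion $B(x,e^{-\beta n})\subset\Delta^n(x)$ with $\beta=C+\chi_0+\Lambda_1$ by induction using bounded distortion \eqref{pebdt}, check $h_\mu(T,\Delta)=\Lambda_0>0$, and then invoke Proposition~\ref{prop1} together with the monotonicity $\tau(A)\ge\tau(B)$ for $A\subset B$. The one place where your write-up goes astray is the middle passage: you are right that the inclusion $P_n(x)\subset B(x,\rho)$ is useless, but your proposed fallback --- covering $B(x,\rho)$ by a bounded number of level-$n$ cylinders --- does not close the argument, because the return time of a union of cylinders is not bounded below by the return times of the individual cylinders (an iterate of one cylinder may land in a different one). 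Fortunately that detour is unnecessary: the single-cylinder inclusion $B(x,\rho)\subset\Delta^n(x)$ \emph{is} available for $n\approx|\ln\rho|/\beta$ (not $|\ln\rho|/\Lambda_0$; the ball must be smaller than the cylinder by the margin $e^{-\chi_0 n}$ dictated by the distance to $\mathcal{S}$), exactly as you describe in your final paragraph, and with it the argument closes as in the billiard case, yielding $\epsilon(x)=1/(2\beta(x))$. A small slip: \eqref{pebdt} is the bounded distortion estimate coming from (PE1), not a boundary nondegeneracy condition.
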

\begin{proof} Recall that $\partial \Delta=\mathcal{S}.$
Under (PE4), for any $\chi>0,$ we have
 $$ \sum_{n=1}^{\infty}\mu\left\{ x: d\left(T^n (x),\mathcal{S}\right)< e^{-\chi n} \right\} < \sum_{n=1}^\infty C e^{-\gamma\chi n} < \infty.  $$
 By Borel-Cantelli Lemma, for a.e. $x\in I$, $\exists\, n_0 (x)$, $n> n_0(x)$,
 \bea\label{disb} d\left(T^n (x),\mathcal{S}\right) >e^{-\chi n}. \eea

 Suppose that $x$ satisfies \eqref{RELY} and \eqref{disb}. Then there exist $\chi_0\geq \chi$ and $\Lambda_1>\Lambda_0$ such that
 $$   d\left(T^n (x),\mathcal{S}\right)> e^{-\chi_0n}\quad
 \text{and}\quad|(T^n)'(x)|\leq e^{n\Lambda_1},\quad\forall n\in \mathbb{Z}_+.$$
Let $\beta= \beta(x) = C+\chi_0+\Lambda_1,$ where $C$ is the constant given by \eqref{pebdt}. We take the refinement $\Delta^n$ of $\Delta$ by $T^n:$
 $$\Delta^n = \{\Delta_{i_1, \ldots, i_n} \subseteq I:\,\,\Delta_{i_1, \ldots, i_n} = \Delta_{i_1} \cap T^{-1} \Delta_{i_2} \cap \cdots \cap T^{-(n-1)} \Delta_{i_n} \}$$
 and denote by $\Delta^n(x)$ the member of $\Delta^n$ containing $x.$

 We claim that $B(x,e^{-\beta n}) \subset \Delta^n (x),$ $\forall n\in\mathbb{Z}_+.$

First, the fact that
 $$ d(x,y) < e^{-\beta n} \leq e^{-\chi_0} < d\left(x,\mathcal{S}\right)$$ dictates that
  $y\in \Delta(x)$.

Then by bounded distortion property of $T,$ we have
$$  d\left(T(x),T(y)\right)<e^C\left|T'(x)\right|d(x,y) \leq  e^{C+\Lambda_0}e^{-\beta n} \leq e^{-2\chi_0} < d\left(T(x), \mathcal{S}\right), $$
 which implies $Ty \in \Delta(Tx)$.

By induction, we obtain
$$  d\left(T^n(x),T^n(y)\right)<e^C\left|(T^n)'(x)\right|d(x,y) \leq e^{C+\Lambda_0n}e^{-\beta n} \leq e^{-\chi_0n } < d\left(T^n (x), \mathcal{S}\right), $$
which implies that $T^n(y) \in\Delta(T^n x)$ and  $B(x,e^{-\beta n}) \subset \Delta^n (x).$

On the other hand, since (see, e.g. \cite{HA02} Lemma 3.1 in Chapter 8)
\bea\label{PeEnt}h_\mu(T)=h_\mu(T,\Delta)=\int\log\left|T'(x)\right|\rd\mu(x)
=\Lambda_0\geq\log\lambda>0\eea
and by Proposition \ref{prop1}, for almost all $x\in I,$
$$\liminf_{\rho\rightarrow \infty} \frac{\tau(B(x,\rho)) }{-\log \rho}  = \liminf_{n\rightarrow \infty} \frac{\tau(B(x,e^{-\beta n})) }{\beta n} \geq \liminf_{n\rightarrow \infty} \frac{\tau(\Delta^n(x))}{\beta n}  \geq \frac{1}{\beta}.$$

Therefore, $B(x,\rho) \cap T^{-k} B(x,\rho) = \emptyset$ whenever $k< -\frac{1}{2 \beta} \log \rho$ and $\rho$ sufficiently small, which proves $\mu$-almost every point $x\in I$ is Diophantine.
\end{proof}

We take $$E_\rho=B(x, \rho)=\left\{y:d(x,y)<\rho\right\}
\,\,\,\text{for}\,\,\,x\in {I}\setminus\mathcal{S}\,\,\,\text{and}\,\,\,\rho<d\left(x,\mathcal{S}\right),$$
$$E_\rho^k=T^{-k}E_\rho\quad\text{and}\quad\rho_n=n^{-1}(\ln n)^{-\delta}\quad\text{for}\quad\delta>0.$$  Observe that $\|1_{E_\rho}\|_{\mathbb{BV}}=2$ and $\sigma(\rho)=\mu(E_\rho)=O(\rho).$ To verify conditions   ${(GM1)}_r-{(GM3)}_r$   for a Diophantine point $x,$ we refer to equation~\eqref{PWMr}. For the separation indices, we take $s(n)=R\ln n$  for  sufficiently large $R$ and $\hat{s}(n)=n/(4r).$

First, we prove ${(GM1)}_r.$  If $0<k_1<k_2<\dots<k_r\leq n$ are such that $k_{i+1}-k_i\geq R\ln n,$ $0\leq i\leq r-1$ with $k_0=0,$
$$\mu\left(\bigcap_{j=1}^r E^{k_j}_{\rho_n} \right)=
\mathbb{E}\left(\prod_{j=1}^r 1_{E_{\rho_n}}\circ T^{k_j} \right)\leq
\left(\prod_{j=1}^{r-1} \left(\mu\left(E_{\rho_n}\right)+C\theta^{R\ln n}\left(2+\mu(E_{\rho_n})\right)\right)\right)\mu\left(E_{\rho_n}\right)$$
$$\leq C\sigma({\rho_n})^r,$$
which yields the right side of ${(GM1)}_r.$    The left side is proved  similarly.

Next, we verify ${(GM2)}_r.$  Let $\Sep(k_1, \dots k_r)=m<r.$ Since $x$ is a Diophantine point, there exists $\epsilon>0$ such that $B(x,\rho_n)\cap T^{-k}B(x,\rho_n)=\emptyset$ for $k\leq \epsilon\ln n$ provided that $n$ is sufficiently large. Thus
we focus on the case $\min_j\left(k_j-k_{j-1}\right)>\epsilon\ln n$ and obtain
$$
 \mu\left(\bigcap_{j=1}^r E^{k_j}_{\rho_n} \right)
\leq \left(\prod_{j=1}^{r-1} \left(\mu\left(E_{\rho_n}\right)+C\theta^{k_{j+1}-k_j}\left(2+\mu(E_{\rho_n})\right)\right)\right)\mu\left(E_{\rho_n}\right)$$
$$\leq C\sigma({\rho_n})^m(\theta^{\epsilon\ln n})^{r-m}\leq{\frac{C  \fv(\rho_n)^m}{(\ln n)^{100r}}}.
$$

Finally, we prove ${(GM3)}_r.$   If $0<k_1<k_2<\dots<k_r<l_1<l_2<\dots<l_r$ are such that  $2^i<k_\alpha\leq 2^{i+1}, 2^j<l_\beta\leq 2^{j+1}$ for $1\leq\alpha,\beta\leq r$, $j-i\geq b$ for some constant $b\geq 2,$ and such that
$k_{\alpha+1}-k_\alpha\geq 2^{i-1}/r,$ $l_{\beta+1}-l_\beta\geq 2^{j-1}/r$ and $l_1-k_r\geq 2^{j-1}/r$
with $k_0=0$ and $l_0=0,$
\Bea
&&\mu\left(\left[\bigcap_{\alpha=1}^r E^{k_\alpha}_{\rho_{2^i}}\right]\bigcap
\left[ \bigcap_{\beta=1}^r E^{l_\beta}_{\rho_{2^j}} \right]\right)\\
&\leq&\mathbb{E}\left(\left(\prod_{\alpha=1}^r 1_{E_{\rho_{2^i}}}\circ T^{k_\alpha} \right)\left(\prod_{\beta=1}^r 1_{E_{\rho_{2^j}}}\circ T^{k_\beta} \right)\right)\\
&\leq&
\left(\prod_{\alpha=1}^r \left(\mu(E_{\rho_{2^i}})+C\theta^{2^{i-1}/r}(\mu(E_{\rho_{2^i}})+2)\right)\right)
\left(\prod_{\beta=1}^{r-1} \left(\mu(E_{\rho_{2^j}})+C\theta^{2^{j-1}/r}(\mu(E_{\rho_{2^j}})+2)\right)\right) \mu\left(E_{\rho_{2^j}}\right)\\
&\leq&C\sigma({\rho_{2^i}})^r\sigma({\rho_{2^j}})^r,\\
\Eea
Hence, Theorem~\ref{PemMultiLogHit} follows from Theorem~\ref{GMultiBCN} and Proposition~\ref{EssInv}.
\begin{Rem}
In our endeavor to prove Theorem~\ref{PemMultiLogHit},  we employ a method that approximates the indicator function of targets by functions with bounded distortion and equipped with the norm $\|\cdot\|_{\mathbb{BV}}.$ Using \eqref{PWMr} that is stronger than $({\rm GEM})_r,$ we have that the large separation between events serves to dominate the growth of the norm $\|\cdot\|_{\mathbb{BV}},$ which is influenced by the smaller separation between these events.  This regulation leads to the establishment of mixing properties  $(GM2)_r$ and $(GM3)_r$  for  simple admissible targets. It is noteworthy that the variation for certain piecewise expanding maps, such as Gauss map, is not bounded, leading to the non-fulfillment of condition $\rm{(Gr)}$ within the realm of functions with bounded variation.

In the next subsection, our objective is to transpose our proof methodology, delineated in Subsection~\ref{MEMS} for multiple recurrence, onto an alternative appropriate space, specifically the space of dynamically H\"older continuous functions.
\end{Rem}

\subsection{Proof of Theorem~\ref{PemMLgRec}}\label{PemMulRe}
Chen and Zhang~\cite{ChenZhang19} established the exponential decay of correlations  for dynamically H\"older continuous functions under assumptions $\rm\overline{(PE1)}$--$\rm\overline{(PE3)}.$ Their approach is outlined as follows: the  expanding map $T$ on $I$ is lifted to a hyperbolic map $\hat{T}$ on the unit square. This allows for the application of the functional analytic techniques designed for hyperbolic systems with singularities, as developed in~\cite{DemersZhang11}.

The connected component of the expanding map $\hat{T},$ denoted as $(a_{j+1},a_j)\times I$ for $0\leq j<N,$ can be partitioned into homogeneity regions.   This partition is done in a manner similar to the approach described by~\cite{ChernovMarkarian06} of dispersing billiard, which serves to regulate the expansion in the vicinity of singularities. Subsequently, one can project the extended singular set of  $\hat{T}^n$ to derive an augmented singular set, denoted as $\mathcal{S}_n^*,$ for the $n$-th iteration of $T.$ We suppose that $\mathcal{P}^*_n$ is the partition of the unit interval $I$ divided by points in $\mathcal{S}_n^*.$

We further introduce the concept of dynamical H\"older functions. To elaborate, we commence by defining the separation time
$s(x,x')$  for any two points $x,\,x'\in I$, as the minimal integer $n\geq 0$ for which $x$ and $x'$ reside in separate connected components of $\mathcal{P}^*_n.$ Note that there exists constant $C>0$ such that $$d(x,x')\leq C\lambda^{-s(x,x')}.$$

\begin{Def}[Dynamically H\"older continuous functions]\label{dynHolPe}
A function $A:I\rightarrow\mathbb{R}$ is said to be dynamically H\"older continuous if there exist  constant $K_A>0$ and $\theta_A\in(0,1)$ such that:
\bea\label{PeDHC}\left|A(x)-A(x')\right|\leq K_A \theta_A^{s(x,x')}\eea
for $x,\,x'\in I$ and  $s(x,x')$ the separation time of $x$ and $x'.$
\end{Def}

\begin{Rem}
The space of H\"older continuous functions is contained in the space of dynamically H\"older continuous functions. Namely, suppose  $A$ to be a H\"older continuous function such that
$$|A(x)-A(y)|\leq C_A d(x,y)^{\alpha_A}\quad\forall\,x,\,y\in I$$
for $C_A>0$ and $0<\alpha_A\leq 1.$ Then $A$ is dynamically H\"older continuous with $K_A=O(C_A)$ and $\theta_A=\lambda^{-\alpha_A}.$
\end{Rem}

\begin{Rem}
We may define dynamically H\"older continuous functions on $I^k$ as follows.
A function $A:I^k\rightarrow\mathbb{R}$ is said to be dynamically H\"older continuous if there are $\theta_A\in(0,1)$ and $K_A>0$ such that for any $x=(x_1,\cdots,x_k),$ $y=(y_1,\cdots,y_k),$
\begin{equation}\label{MHolPe}
|A(x)-A(y)|\leq K_A\theta_A^{\min_{1\leq i\leq k}\{s(x_i,y_i)\}}.
\end{equation}
Similarly, if we take $A$ to be a H\"older continuous function such that
$$|A(x)-A(y)|\leq C_A d(x,y)^{\alpha_A},\quad\forall\,x,\,y\in I^k$$
for $C_A>0$ and $0<\alpha_A\leq 1.$ Then $A$ is dynamically H\"older continuous with $\theta_A=\lambda^{-\alpha_A}$ and some constant $K_A$ depending on $C_A,$ $k$ and the system.
\end{Rem}

Let $\mathbb{DH}(I^k)$ represent the space of dynamically H\"older continuous functions on $I^k$ that are essentially bounded. For $A\in\mathbb{DH}(I^k),$ we define the norm as \bea\label{NomDHC}\|A\|_{\mathbb{DH}}=\|A\|_\infty+K_A,\eea
where $\|\cdot\|_\infty$ is the essential supremum norm, and $K_A$ is the minimal constant that satisfies\eqref{MHolPe}.

\begin{Thm} \cite[Theorem 1.2]{ChenZhang19} \label{decayHolPe}
	Let $\mathcal{A}=\{A_i\}_{i=1}^l$ and $\mathcal{B}=\{B_j\}_{j=1}^m$ be two sets of dynamically H\"older continuous functions.
	Suppose that there exist constants $\theta_\mathcal{A}\in(0,1),$ $K_\mathcal{A}>0,$ $\|\mathcal{A}\|_\infty>0$ and $\theta_\mathcal{B}\in(0,1),$ $K_\mathcal{B}>0,$ $\|\mathcal{B}\|_\infty>0$ such that for all $i, j,$
	$$\theta_{A_i}\leq\theta_\mathcal{A},\quad K_{A_i}\leq K_\mathcal{A},\quad \|A_i\|_\infty\leq\|\mathcal{A}\|_\infty,$$
and  $$\theta_{B_i}\leq\theta_\mathcal{B},\quad K_{B_i}\leq K_\mathcal{B},\quad \|B_i\|_\infty\leq\|\mathcal{B}\|_\infty.$$

	We consider two products $\tilde{A} = A_0 (A_1 \circ T^{t_{-1}}) \cdots (A_l \circ T^{t_{-l}})$ and $\tilde{B} = B_0 (B_1 \circ T^{t_1}) \cdots (B_m\circ T^{t_m})$,
	 where $t_{-l} < \cdots < t_{-1} <0<t_1 < \cdots <t_m$. Then there are constants
	 $C_{\mathcal{A}, \mathcal{B}} >0,$ $\theta=\theta_{\mathcal{A}, \mathcal{B}}<1$ such that for all
	 $n>0,$ we have
	\bea\label{MEMPEM} \left| \int \tilde{A} (\tilde{B} \circ T^n) \rd\nu - \int \tilde{A} \rd \nu \int \tilde{B} \rd\nu  \right| \leq
	C_{\mathcal{A}, \mathcal{B}} \; \theta^n,\eea
where $$C_{\mathcal{A}, \mathcal{B}}=C\|\mathcal{A}\|_\infty^l\|\mathcal{B}\|_\infty^m
\left(\frac{K_\mathcal{A}\|\mathcal{B}\|_\infty}{1-\theta_\mathcal{A}}+
\frac{K_\mathcal{B}\|\mathcal{A}\|_\infty}{1-\theta_\mathcal{B}}+\|\mathcal{A}\|_\infty\|\mathcal{B}\|_\infty\right).$$
\end{Thm}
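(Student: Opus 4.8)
The plan is to reduce \eqref{MEMPEM} to the \emph{two--function} exponential decay of correlations together with the stability of the dynamically H\"older class under the operations that build $\tilde A$ and $\tilde B$, exactly as in the dispersing billiard analogue (Theorem~\ref{decaycolmap}). Since $T$ is non-invertible, the backward iterates $T^{t_{-j}}$ appearing in $\tilde A$ only make sense on the hyperbolic lift $\hat T$ constructed in \cite{ChenZhang19} (following \cite{DemersZhang11}), so I would first pass to $\hat T$ on the unit square, lift each $A_i,B_j$ to a function constant along stable fibers via the projection $\pi$, and use $\int_I f\,\rd\nu=\int f\circ\pi\,\rd\hat\mu$ to rewrite the left--hand side of \eqref{MEMPEM} as a correlation $\int\tilde A(\tilde B\circ\hat T^{\,n})\,\rd\hat\mu-\int\tilde A\,\rd\hat\mu\int\tilde B\,\rd\hat\mu$ on the lift. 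The functional--analytic heart, namely that for two dynamically H\"older functions $F,G$ one has $|\int F(G\circ\hat T^{\,n})\rd\hat\mu-\int F\rd\hat\mu\int G\rd\hat\mu|\le C\|F\|_{\mathbb{DH}}\|G\|_{\mathbb{DH}}\theta_0^{\,n}$, is the base case $l=m=0$ of \eqref{MEMPEM}, established directly by the transfer--operator/Banach--space machinery of \cite{DemersZhang11,ChenZhang19}, which I take as given.

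The technical core is then to show that $\tilde A$ and $\tilde B$ are themselves dynamically H\"older with norms controlled \emph{uniformly} in the gaps $t_{\pm j}$. Two ingredients suffice. First, a product rule: if $F,G$ are dynamically H\"older then so is $FG$, with $\theta_{FG}=\max\{\theta_F,\theta_G\}$ and $K_{FG}\le\|F\|_\infty K_G+\|G\|_\infty K_F$; iterating across $l+1$ (resp.\ $m+1$) factors produces the homogeneous weights $\|\mathcal A\|_\infty^{l}$ and $\|\mathcal B\|_\infty^{m}$ together with a \emph{sum} of H\"older contributions. Second, and crucially, a composition rule with the correct monotonicity of the separation time: since backward iteration increases the forward separation time, $s(\hat T^{t}x,\hat T^{t}y)\ge s(x,y)$ for $t\le0$, so for each factor of $\tilde A$ one gets $|A_j(\hat T^{t_{-j}}x)-A_j(\hat T^{t_{-j}}y)|\le K_{A_j}\theta_{A_j}^{s(x,y)-t_{-j}}\le K_{A_j}\theta_{A_j}^{s(x,y)}$ because $t_{-j}\le0$ and $\theta_{A_j}<1$. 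Thus composing with the backward iterates never worsens the H\"older constant, and $\tilde A$ is dynamically H\"older along the unstable direction with $\theta_{\tilde A}=\theta_{\mathcal A}$ and $K_{\tilde A}=O\big(l\,\|\mathcal A\|_\infty^{l}K_{\mathcal A}\big)$, uniformly in $t_{-1},\dots,t_{-l}$. The mirror argument, with forward iterates improving the backward separation time, gives the corresponding bound for $\tilde B$ along the stable direction.

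With these norm bounds in hand I would feed $\tilde A,\tilde B$ into the base--case decay estimate and collect constants. The weight $\|\mathcal A\|_\infty^l\|\mathcal B\|_\infty^m$ factors out of the $L^\infty$ parts; the cross terms $\|F\|_\infty K_G+\|G\|_\infty K_F$, summed over factors, yield the two fractions $\tfrac{K_{\mathcal A}\|\mathcal B\|_\infty}{1-\theta_{\mathcal A}}$ and $\tfrac{K_{\mathcal B}\|\mathcal A\|_\infty}{1-\theta_{\mathcal B}}$ (the geometric denominators $\tfrac{1}{1-\theta}$ arising from bounding the finite sum of $\theta$--contributions by its series); and the pure $L^\infty\!\cdot\!L^\infty$ interaction produces the last term $\|\mathcal A\|_\infty\|\mathcal B\|_\infty$. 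This reproduces exactly the displayed $C_{\mathcal A,\mathcal B}$, while $\theta=\theta_{\mathcal A,\mathcal B}$ is the decay rate output by the base case for the class determined by $\{\theta_{\mathcal A},\theta_{\mathcal B}\}$.

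I expect the genuine obstacle to lie not in this bookkeeping but in the base--case estimate on the singular hyperbolic lift: one must verify that the homogeneity--layer decomposition near the singularities $\mathcal S$ controls distortion well enough for the Demers--Zhang Banach spaces to apply under $\rm\overline{(PE1)}$--$\rm\overline{(PE3)}$, and that lifted one--dimensional dynamically H\"older functions land in those spaces with comparable norms. Granting that input, which is precisely where \cite{ChenZhang19} does the work, the uniform--in--gaps control of the product norms through separation--time monotonicity is the only additional point, and it is robust; this is why the statement, and its constant $C_{\mathcal A,\mathcal B}$, matches the billiard case of Theorem~\ref{decaycolmap} verbatim.
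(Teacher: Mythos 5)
This theorem is not proved in the paper: it is imported verbatim (with the billiard-style constant) from \cite[Theorem 1.2]{ChenZhang19}, so there is no in-paper argument to compare yours against. Your sketch is a faithful reconstruction of the standard Chernov--Markarian-style proof that Chen and Zhang adapt: pass to the hyperbolic lift $\hat T$, check that the products $\tilde A$, $\tilde B$ remain dynamically H\"older with norms uniform in the gaps $t_{\pm j}$ (using that backward iteration does not decrease the forward separation time, and the symmetric statement for $\tilde B$), and feed them into the two-function decay estimate coming from the Demers--Zhang functional-analytic machinery --- which is exactly the input you, like the paper, take as given by citation. One small bookkeeping wrinkle: you first assert $K_{\tilde A}=O\bigl(l\,\|\mathcal A\|_\infty^{l}K_{\mathcal A}\bigr)$ by discarding the gains $\theta_{\mathcal A}^{|t_{-j}|}$ entirely, but the displayed constant $C_{\mathcal A,\mathcal B}$ requires keeping them and using $|t_{-j}|\geq j$ to sum the geometric series, which is what produces $\tfrac{K_{\mathcal A}\|\mathcal B\|_\infty}{1-\theta_{\mathcal A}}$ in place of a factor of $l$; you do say this later, so it is an inconsistency of exposition rather than a gap. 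The only genuinely nontrivial content --- that the lifted one-dimensional dynamically H\"older functions land in the Demers--Zhang Banach spaces with comparable norms under $\overline{\rm(PE1)}$--$\overline{\rm(PE3)}$ --- is correctly identified by you as the part done in \cite{ChenZhang19}, and neither you nor the paper reproves it.
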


In analogy with the dispersing billiard case, we consider the space of Lipschitz functions as a subspace  of dynamically H\"older continuous functions. This subspace is equipped with the norm  $\|\cdot\|_{\mathbb{DH}}$  as defined in \eqref{NomDHC}. We denote this space on $I^k$ by $\mathbb{DH_L}(I^k)$ and establish that the system  $(I^k,\mathcal{B}(I^k),\nu,T,\mathbb{DH_L}(I^k))$ is a generalized  $(r+1)$-fold exponentially mixing systems according to Definition~\ref{def.mixing}.

On the one hand,  $\rm{(Prod)}$ follows from \eqref{NomDHC} and we can take $L=\Lambda$ for  $\rm{(Gr)}.$ On the other hand, we obtain the multiple exponentially mixing property for $T$ using \eqref{MEMPEM}. More precisely, there exist constant  $C>0$ and $\theta<1$ such that for any  $A_0, A_1,\dots, A_r\in\mathbb{DH_L}(I)$
and any $r$ tuple $k_1<k_2<\dots<k_r,$ we have
 \begin{equation}\label{MEMPE}
 \left| \int \prod_{j=0}^{r} \left(A_j \circ T^{k_j} \right) \rd \nu  -
 \prod_{j=0}^{r}\int A_j \rd\nu  \right| \leq C \theta^n \prod_{j=0}^{r} \|A_j\|_{\mathbb{DH}},
 \end{equation}
 where $\DS n=\min_j (k_j-k_{j-1})$ with $k_0=0.$

For the generalized $(r+1)$-fold exponentially mixing property, we consider a larger class of functions. Specifically, we require the existence of constants $\brC>0$ and $\brtheta<1$ such that for any Lipschitz function $B\in\mathbb{DH_L}(I^{r+1})$ on $I^{r+1},$ the following inequality holds:
 \begin{equation}\label{MEMPEF}
\left|\int B(x_0,T^{k_1}x_0,\cdots,T^{k_r}x_0) \rd\nu(x_0)-\int B(x_0,\cdots,x_r)\rd\nu(x_0)\cdots \rd\nu(x_r)\right|\leq \brC\brtheta^n\left\|B\right\|_{\mathbb{DH}},
 \end{equation}
 where $\DS n=\min_j (k_j-k_{j-1})$ with $k_0=0.$

Using the same argument as presented in Appendix A of \cite{DFL22}, we can establish the equivalence between  \eqref{MEMPE} and \eqref{MEMPEF}. Essentially, we can demonstrate that if either \eqref{MEMPE} or \eqref{MEMPEF}  holds for $C^s$ functions with norm $\|\cdot\|_{C^s}$, then it also applies to Lipschitz functions with norm $\|\cdot\|_{\mathbb{DH}},$ and vice versa. This allows us to derive \eqref{MEMPEF} from \eqref{MEMPE} by decomposing $C^s$ functions equipped with the norm $\|\cdot\|_{C^s},$ in terms of a family of bases within the Sobolev space, which completes the proof of the generalized $(r+1)$-fold  exponentially mixing property for the system $(I^k,\mathcal{B}(I^k),\nu,T,\mathbb{DH_L}(I^k)).$

We denote $\tilde{I}=I\setminus\left(\bigcup_{k\in\mathbb{N}}T^{-k}\mathcal{S}\right)$ and take $$\bfE_{\rho}=\left\{(x,y)\in\tilde{I}\times\tilde{I}:
d(x,y)\leq\min\left\{\rho,\,d(x,\partial\tilde{I})\right\}\right\},\, \bfE_\rho^k=\left\{x:d(x,T^kx)\in\bfE_{\rho}\right\},$$ $$\bar{\fv}(\rho)=(\mu\times\mu)(\bfE_\rho),\quad\text{and}\quad\rho_n=n^{-1}(\ln n)^{-\delta},\,\,\delta>0.$$

We now demonstrate that the sequence $\{\bfE_{\rho_n}\}$ constitutes composite admissible targets as Definition \ref{def.recurrent.targets}. The proof is structured as follows:
 \begin{itemize}
\item[$\bullet$] Observe that $\bsigma(\rho)=(\nu\times\nu)\left(\bfE_{\rho}\right)=O\left(\rho\right)$ and
 $$\int\prod_{i=1}^r\left(\int 1_{\bfE_\rho}(x_0,x_i) \rd\nu(x_i)\right) \rd\nu(x_0)=O\left(\rho^r\right).$$
\item[$\bullet$] We select functions $\bA_\rho^+,\,\bA_\rho^-\in\mathbb{DH_L}(I^k)$ such that $$1_{\bfE_{\rho-\rho^s}}\leq \bA_\rho^-\leq 1_{\bfE_\rho}\leq \bA_\rho^+\leq 1_{\bfE_{\rho+\rho^s}},\,\,  \|\bA_\rho^\pm\|_{\mathbb{DH}}\leq C\,\,\text{for some constants}\,\,s>1,\,\,C>0.$$
\item[$\bullet$] Then $$\mathbb{E}\bA_\rho^+-\mathbb{E}\bA_\rho^-\leq  C\bar{\fv}(\rho)^{(1+s)/2},$$ which verifies  ${\rm(\overline{Appr})(i)-(iii)}.$
\item[$\bullet$]     Note that
$$\int\bar{A}_\rho^+(x,y)\rd\nu(y)\leq\nu (B(x,{\rho+\rho^s}))\leq C\rho$$
and a similar inequality holds for $\DS\int\bar{A}_\rho^+(x,y)d\nu(x).$ Then ${\rm(\overline{Appr})(iv)-(v)}$ follows by $$\int\bar{A}_\rho^+(x,T^kx)\rd\nu(x)\leq\nu\left(\bfE_{\rho+\rho^s}^k\right)\leq\nu\left(\bfE_{2\rho}^k\right).$$
\item[$\bullet$] Next $({\rm \overline{Sub}})$ is valid because $$\bfE_{\rho}^{k_1}\cap\bfE_{\rho}^{k_2}\subset T^{-k_1}\bfE_{2\rho}^{k_2-k_1}.$$
\item[$\bullet$] Finally, the following proposition verifies $({\rm \overline{Mov}})$.
\end{itemize}

\begin{Prop}\label{SlowREc-Pe}
For each $A>0,$ there exists $\rho_0>0$ such that for all $\rho<\rho_0$ and all $k\in \mathbb{Z}_+,$ we have
\begin{equation}
\label{EqWNRS}
 \nu\left(\left\{x:d\left(x,T^k(x)\right)<\rho\right\}\right)\leq C|\ln \rho|^{-A}.
\end{equation}
\end{Prop}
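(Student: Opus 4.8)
The plan is to follow the structure of the proof of Proposition~\ref{SlowREc-Bid}, replacing the role of the homogeneity strips (where the billiard map has bounded derivative) by the super-level sets of $|T'|$, whose small measure is controlled by~\eqref{probdy}. As there, it suffices to establish~\eqref{EqWNRS} for all sufficiently large $A$, since decreasing $A$ only weakens the bound. Fix such an $A$, set $B=A^2$, and split the range of $k$ at the threshold $B\ln|\ln\rho|$.

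For the \emph{large-$k$ regime} $k\ge B\ln|\ln\rho|$, I would put $\hat\rho=|\ln\rho|^{-2A}$ and use the generalized mixing estimate $({\rm GEM})_1$ for the system $(I,\mathcal{B}(I),\nu,T,\mathbb{DH_L})$ together with the approximating function $\bA_{\hat\rho}^+\ge 1_{\bfE_{\hat\rho}}$. Since $\bsigma(\hat\rho)=O(\hat\rho)$ while $\|\bA_{\hat\rho}^+\|_{\mathbb{DH}}$ grows only polynomially in $1/\hat\rho$, whereas $\theta^{k}$ with $k\ge B\ln|\ln\rho|=A^2\ln|\ln\rho|$ contributes a factor $|\ln\rho|^{-A^2|\ln\theta|}$ that overwhelms this polynomial blow-up for $A$ large, I get
$$\int\bA_{\hat\rho}^+(x,T^k x)\,\rd\nu(x)\le C\left(\bsigma(2\hat\rho)+\theta^{k}\|\bA_{\hat\rho}^+\|_{\mathbb{DH}}\right)\le C|\ln\rho|^{-2A}.$$
Because $d(x,T^kx)<\rho\le\hat\rho$ forces either $(x,T^kx)\in\bfE_{\hat\rho}$ or $d(x,\partial\tilde{I})<\rho$, and the latter set has $\nu$-measure $\ll|\ln\rho|^{-2A}$ by~\eqref{probdy}, this yields $\nu(\{d(x,T^kx)<\rho\})\le C|\ln\rho|^{-2A}$ in this regime.

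For the \emph{small-$k$ regime} $1\le k<B\ln|\ln\rho|$ I would amplify $k$ into the large-$k$ regime. Let $G_t=\{x:|T'(x)|\le t\}$ with $t=|\ln\rho|^{p}$ and $p\tau>A$, so that $\nu(G_t^c)\le t^{-\tau}$ by~\eqref{probdy}, and set $G_{t,l}=\bigcap_{j=0}^{l-1}T^{-j}G_t$, whence $\nu(G_{t,l}^c)\le l\,t^{-\tau}$ by invariance of $\nu$. Taking $L=\lceil B\ln|\ln\rho|/k\rceil$ and $l=Lk$, for $x\in G_{t,l}$ with $d(x,T^kx)\le\rho$ the orbit segment $x,Tx,\dots,T^{(L-1)k}x$ stays in $G_t$, where $|T'|\le t$ and, being bounded away from $\mathcal{S}$, consecutive points $T^{(j-1)k}x,T^{jk}x$ lie in a common continuity domain of $T^{(j-1)k}$; a telescoping estimate then gives
$$d\bigl(x,T^{Lk}x\bigr)\le\sum_{j=1}^{L}d\bigl(T^{(j-1)k}x,T^{jk}x\bigr)\le L\,t^{Lk}\rho\le\sqrt{\rho},$$
the last inequality holding for small $\rho$ because $Lk\le 2B\ln|\ln\rho|$ and $(\ln|\ln\rho|)^2=o(|\ln\rho|)$. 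Since $Lk\ge B\ln|\ln\rho|\ge B\ln|\ln\sqrt\rho|$, the large-$k$ estimate applies to $T^{Lk}$ with parameter $\sqrt\rho$, and combining the two contributions gives
$$\nu(\{x:d(x,T^kx)<\rho\})\le\nu(G_{t,l}^c)+\nu(\{x:d(x,T^{Lk}x)\le\sqrt\rho\})\le l\,t^{-\tau}+C|\ln\rho|^{-A}\le C|\ln\rho|^{-A}.$$

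The main obstacle is the small-$k$ amplification, specifically controlling the derivative growth of $T$ along orbit segments that approach the singular set $\mathcal{S}$. In the billiard setting the homogeneity strips furnish regions of bounded derivative with a summable tail; here the analogous control is exactly the polynomial tail bound~\eqref{probdy} coming from $\overline{(PE2)}$, and the whole argument hinges on choosing the cutoff $t$ large enough that the exceptional set $G_{t,l}^c$ is negligible yet small enough that the amplified displacement $t^{Lk}\rho$ remains below $\sqrt\rho$. One must also verify that throughout the telescoping the relevant orbit points stay in common continuity domains of the iterates, which follows since points of $G_t$ are a definite distance $\gtrsim t^{-1}$ from $\mathcal{S}$ while the tracked distances never exceed $\sqrt\rho\ll t^{-1}$.
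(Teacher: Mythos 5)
Your proposal follows essentially the same route as the paper's proof: the same splitting of $k$ at the threshold $B\ln|\ln\rho|$, the same use of $({\rm GEM})_1$ with the approximating function $\bar{A}^+_{\hat\rho}$ in the large-$k$ regime, and the same amplification of small $k$ via the sublevel sets $\{|T'|\le t\}$ intersected along the orbit, with the tail bound \eqref{probdy} controlling the exceptional set and the telescoping estimate pushing $T^{Lk}$ into the large-$k$ regime. The only caveat is your parenthetical claim that points of $G_t$ lie a distance $\gtrsim t^{-1}$ from $\mathcal{S}$, which holds only at singularities with $\alpha_i>0$ (for the Gauss map, for instance, points of bounded derivative can be arbitrarily close to the accumulating singularities); however, the continuity-domain issue this remark is meant to address is likewise left implicit in the paper's own telescoping step, so it does not distinguish your argument from theirs.
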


\begin{proof} We prove \eqref{EqWNRS} for sufficiently large $A,$ then \eqref{EqWNRS} holds for each $A>0.$
Take $B=\max\{A^2,A/\tau\},$ where $\tau$ is defined in \eqref{probdy}.  If $k\geq B\ln|\ln\rho|,$ take $\hat\rho=|\ln \rho|^{-A}.$ By $\rm{(GEM)_1},$ we obtain that
\begin{multline}
\label{eq.highPe}
\nu \left(\left\{x: d\left(x,T^k(x)\right)\leq \rho\right\}\right)\leq\nu \left(\left\{x: d\left(x,T^k(x)\right)\leq \hat\rho\right\}\right)\\
\leq \int\bar{A}^+_{\hat{\rho}}\left(x,T^k(x)\right)\rd\nu(x) \leq C\left((\hat\rho+\hat{\rho}^s)^2+\hat\rho^{-\tau} \theta^k\right)\leq |\ln \rho|^{-2A},
\end{multline}
provided that $A\geq-2(s+1)/\ln\theta$ and $\rho$ is sufficiently small.

Now fix any $1\leq k\leq B\ln|\ln\rho|$.  Assume that $x$ satisfies $d\left(x,T^k(x)\right)\leq \rho.$ We denote
$$\mathcal{I}_n=\left\{x\in I\setminus\mathcal{S}:|T'(x)|\leq n\right\}\quad\text{and}\quad\mathcal{I}_{n,l}=\bigcap_{j=1}^l T^{-j}\mathcal{I}_n.$$
Then we have that $$\nu\left(\mathcal{I}_{n,l}^c\right)\leq\sum_{j=1}^l\nu(\mathcal{I}_n^c)\leq ln^{-\tau}$$
by \eqref{probdy}.

For any $x\in\mathcal{I}_{n,l}$ and $l\geq jk,$ we obtain that
$$d\left(T^{(j-1)k}(x), T^{jk} (x)\right)\leq n^{(j-1)k} \rho.$$
If we take $n=\lfloor|\ln\rho|^{2B}\rfloor,$ $l=\lfloor8B\ln|\ln\rho|/k\rfloor,$ $L=\lceil4B\ln|\ln\rho|/k\rceil,$ we find
$$d\left(x,T^{Lk} (x)\right)\leq \sum_{j=1}^L d\left(T^{(j-1)k}(x),T^{jk}(x)\right)
\leq L n^{4B\ln|\ln\rho|}\rho\leq \sqrt{\rho},$$
provided that $\rho$ is sufficiently small. Since $Lk\geq B\ln|\ln\rho|,$  \eqref{eq.highPe} is applied and we have
$$\nu \left(\left\{x: d\left(x, T^k (x)\right)\leq \rho\right\}\right)
\leq \nu(\mathcal{I}_{n,l}^c)+\nu \left(\left\{x\in\mathcal{I}_{n,l}: d(x, T^k (x))\leq \rho\right\}\right)$$
$$\leq l n^{-\tau}+\nu\left(\left\{x: d(x, T^{Lk} (x))\leq \sqrt\rho\right\}\right)\leq C|\ln \rho|^{-A}.
$$
\end{proof}

We have obtained that $\left({I},\mathcal{B}({I}),T,\mu,\mathbb{BV}({I}^{r+1})\right)$ is generalized $(r+1)$-fold exponentially mixing
as Definition~\ref{def.mixing} and $\{\bfE_{\rho_n}\}$ is composite admissible as Definition~\ref{def.recurrent.targets}. By Corollary~\ref{cor.mixing},
$$\limsup_{n\rightarrow \infty} \frac{\left|\ln d_n^{(r)}(x,x)\right|-\ln n}{\ln \ln n}\leq \frac{1}{r}\,\,\text{for}\,\, a.e.\text{-}x$$
and
$$\limsup_{n\rightarrow \infty} \frac{\left|\ln d_n^{(r)}(x,x)\right|-\ln n}{\ln \ln n}\geq \frac{1}{r}\,\,\text{on a positive measure set}.$$

If we denote
$$R_{\rm{Pe}}=\left\{x\in I:\limsup_{n\rightarrow \infty} \frac{\left|\ln d_n^{(r)}(x,x)\right|-\ln n}{\ln \ln n}
        = \frac{1}{r}\right\},$$
then we have $\mu\left(R_{\rm{Pe}}\right)>0.$ We now prove  $\mu\left(R_{\rm{Pe}}\right)=1.$

Suppose that $x\in R_{\rm{Pe}}\setminus\left(\bigcup_{k\in\mathbb{N}}T^{-k}\mathcal{S}\right).$ For any $\epsilon>0,$ there exists an increasing sequence $\{n_k\}_{k=1}^\infty$ such that
$$d_{n_k}^{(r)}(x,x)<\frac{1}{n_k\left(\ln\ln n_k\right)^{\frac{1}{r}-\epsilon}},\quad\forall k\in\mathbb{Z}_+$$
and $\left|T'(y)\right|\leq e^C\left|T'(x)\right|$ for $x$ and $y$ in the same continuity domain of $T.$

It follows that
$$d_{n_k}^{(r)}\left(T(x),T(x)\right)<\frac{e^C\left|T'(x)\right|}{n_k\left(\ln\ln n_k\right)^{\frac{1}{r}-\epsilon}},\quad\forall k\in\mathbb{Z}_+$$
and
$$\limsup_{n\rightarrow \infty} \frac{\left|\ln d_n^{(r)}\left(T(x),T(x)\right)\right|-\ln n}{\ln \ln n}\geq \frac{1}{r},\quad\forall k\in\mathbb{Z}_+.$$
Hence $R_{\rm{Pe}}\setminus\left(\bigcup_{k\in\mathbb{N}}T^{-k}\mathcal{S}\right)\subseteq T^{-1}R_{\rm{Pe}}.$ In addition since $\nu\left(\mathcal{S}\right)=0,$ we have $\nu\left(R_{\rm{Pe}}\Delta T^{-1}R_{\rm{Pe}}\right)=0.$
Then Theorem~\ref{PemMLgRec} follows by the ergodicity of the measure $\nu.$

\appendix
\section{Proof of Theorem~\ref{GMultiBCN}}\label{appmbc}
Let  $a_2>a_1>0$ and $a_2\in\mathbb{Z}_+.$ We estimate the probability of the events that there exist $r$ integers $k_1<k_2<\cdots<k_r$​ such that $a_1n\leq k_j\leq a_2n$ and  $\{E^{k_j}_{\rho_n}\}_{j=1}^r$ occur. To simplify the notation, we define
$$ E^{k_1, \dots, k_r}_{\rho_n}:=\bigcap_{j=1}^r  E^{k_j}_{\rho_n}$$
for a family of events $\{ E_{\rho_{n}}^k \}_{(n,k) \in \mathbb{N}^2; 1\leq k \leq 2n}.$

\begin{Lem}
\label{LmSum}
If  $(GM1)_r$ and $(GM2)_r$ are satisfied, then there exist constant $C>0$ and sequence $\eta_n\rightarrow0$ such that
  \begin{equation}
\label{EqSum}
C^{-1}n^r \fv(\rho_n)^r-\eta_n\left(\ln n\right)^{-10}\leq\sum_{a_1 n<k_1<k_2<\dots <k_r\leq a_2n}
\Prob\left(E^{k_1, \dots, k_r}_{\rho_n}\right)\leq C n^r \fv(\rho_n)^r+\eta_n\left(\ln n\right)^{-10}.
\end{equation}
In addition if $a_2-a_1\geq1/2,$ then
\begin{equation}
\label{WellSepHat}
C^{-1} n^r \fv(\rho_n)^r\leq\sum_{\overset{a_1 n<k_1<k_2<\dots <k_r\leq a_2 n}{ { \wSep_n}(k_1,\dots, k_r)=r}} \Prob\left(E^{k_1, \dots, k_r}_{\rho_n}\right)\leq C n^r \fv(\rho_n)^r.
\end{equation}

\end{Lem}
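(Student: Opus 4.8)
The plan is to split the sum according to the value of the separation index $\Sep_n(k_1,\dots,k_r)$ and to combine the pointwise bounds $(GM1)_r$, $(GM2)_r$ with an elementary count of the tuples in each class. Since $k_1>a_1n$ while $s(n)\le C(\ln n)^2$, for all large $n$ the leftmost gap $k_1-k_0=k_1$ already exceeds $s(n)$; hence $\Sep_n\ge 1$, and every admissible tuple falls into one of the classes $\{\Sep_n=m\}$ with $1\le m\le r$.

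I would first handle the fully separated class $\Sep_n=r$, in which all $r$ gaps are $\ge s(n)$. A standard lattice-point count of the $r$-tuples in a window of length $\asymp n$ with prescribed minimal gaps shows, using $(r-1)s(n)=O((\ln n)^2)=o(n)$, that their number is $\asymp n^r$, with implied constants depending only on $a_1,a_2,r$. Applying $(GM1)_r$ term by term — each probability lies in $[C^{-1}\sigma(\rho_n)^r,\,C\sigma(\rho_n)^r]$ — shows that this class contributes an amount comparable to $n^r\sigma(\rho_n)^r$. As the remaining classes contribute nonnegatively, this at once gives the lower bound in \eqref{EqSum} and the main term of its upper bound.

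The heart of the matter is to show that the classes $\Sep_n=m$ with $1\le m\le r-1$ contribute no more than $\eta_n(\ln n)^{-10}$. In such a tuple the $m$ large gaps split the points into clusters: choosing which gaps are large costs a bounded combinatorial factor, each large gap ranges over $O(n)$ values, and each of the $r-m$ small gaps over fewer than $s(n)$ values, so the number of such tuples is $O\big(n^m s(n)^{r-m}\big)=O\big(n^m(\ln n)^{2(r-m)}\big)$. Multiplying by the $(GM2)_r$ bound $C\sigma(\rho_n)^m(\ln n)^{-100r}$, the contribution of this class is
\[
\le C\,\big(n\sigma(\rho_n)\big)^m(\ln n)^{2(r-m)-100r}.
\]
Setting $x=n\sigma(\rho_n)$ I would split into two regimes: if $x\ge 1$ then $x^m\le x^r$ and $(\ln n)^{2(r-m)-100r}\le 1$, so the term is absorbed into $Cn^r\sigma(\rho_n)^r$; if $x<1$ then $x^m<1$, so the term is at most $(\ln n)^{-98r-2}=\eta_n(\ln n)^{-10}$ with $\eta_n:=(\ln n)^{-(98r-8)}\to 0$. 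Summing over the finitely many values of $m$ and enlarging $C$ completes \eqref{EqSum}.

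For \eqref{WellSepHat} I would use that $\hat s(n)\ge\varepsilon n>s(n)$ for large $n$, so $\wSep_n=r$ forces $\Sep_n=r$ and $(GM1)_r$ applies term by term once more. It then suffices to count the tuples in $(a_1n,a_2n]$ all of whose gaps exceed $\hat s(n)$: as $a_2-a_1\ge\tfrac12$ the available length is $\ge n/2$, whereas the separation consumed is $r\hat s(n)<n(1-q)/2$, leaving free length $\ge nq/2$; the same lattice-point count gives a count $\asymp n^r$, and multiplying by the $(GM1)_r$ bounds yields \eqref{WellSepHat}. The main obstacle throughout is the middle step: one must show the partially separated classes are negligible uniformly in the a priori unknown size of $n\sigma(\rho_n)$, which is exactly what the large exponent $100r$ in $(GM2)_r$ is designed to guarantee against the polylogarithmic cost $s(n)^{r-m}$ of the clustered gaps.
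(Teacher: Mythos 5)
Your proposal is correct and follows essentially the same route as the paper: split the sum by the value of $\Sep_n$, count the tuples in each class ($\asymp n^r$ for the fully separated class, $O(n^m s(n)^{r-m})$ otherwise), and apply $(GM1)_r$ resp.\ $(GM2)_r$ term by term, with the same stars-and-bars count for \eqref{WellSepHat}. Your explicit two-regime split on $x=n\fv(\rho_n)$ to absorb the partially separated classes is in fact slightly more careful than the paper, which leaves the factor $n^m\fv(\rho_n)^m$ in its bound and absorbs it implicitly; the only detail you omit is the paper's opening reparametrization remark justifying the application of $(GM1)_r$, $(GM2)_r$ to tuples with $k_r\leq a_2n$ rather than $k_r\leq n$.
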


\begin{proof}
Suppose that $\{\delta_n\}$ is a decreasing sequence such that  $\rho_n=\delta_{a_2n}.$ If the targets $\{E_{\delta_n}\}_{n=1}^\infty$ satisfy conditions $(GM1)_r$ and $(GM2)_r$ with a separation function $s_\delta(n)\leq C(\ln n)^2,$ then  $(GM1)_r$ and $(GM2)_r$ also hold for the targets  $\{E_{\rho_n}\}_{n=1}^\infty$ with the conditions replaced by $0<k_1<k_2<\cdots<k_r\leq a_2n$ for the separation function  $s(n)= s_\delta(a_2n),$ which satisfies $s(n)\leq Ca_2^2(\ln n)^2.$

We denote by
\begin{equation*}
S_m:=\sum_{\overset{a_1 n<k_1<k_2<\dots <k_r\leq a_2 n}
{ { \Sep_n}(k_1,\dots, k_r)=m}} \Prob\left(E^{k_1, \dots, k_r}_{\rho_n}\right),\,\,\,m\leq r.\end{equation*}
 Note that $S_r$ includes
$(a_2-a_1)^rn^r(1+\delta'_n)/r!$ terms for some sequence $\delta'_n\rightarrow0$ as $n\to\infty,$  hence $(GM1)_r$ yields
\begin{equation}
\label{WellSep}
C^{-1}n^r \fv(\rho_n)^r\leq S_r \leq Cn^r \fv(\rho_n)^r\end{equation}
for some constant $C>0.$

For $m<r,$
$S_m$ consists of $O\left(n^m s(n)^{r-m}\right)$ terms.
Hence $(GM2)_r$  gives that
\begin{equation}
\label{NonWellSep}
 S_m\leq Cn^m s(n)^{r-m} \frac{\fv(\rho_n)^m}{(\ln n)^{100r}} =\eta_n n^m \fv(\rho_n)^m \left(\ln n\right)^{-10}
\end{equation}
for some sequence $\eta_n\rightarrow0.$
Combining \eqref{WellSep} with \eqref{NonWellSep} we obtain \eqref{EqSum}.

The proof of \eqref{WellSepHat}  parallels that of \eqref{WellSep} except for one difference. In \eqref{WellSep}, the  sum includes $O\left((a_2-a_1)^rn^r(1+\delta'_n)/r!\right)$ terms. In \eqref{WellSepHat}, the number of terms in the sum is bounded by $(a_2-a_1)^r\left(n/2-r\hat s(n)\right)^r/r!,$ which is  greater than $(a_2-a_1)^rq^rn^r/(2^rr!)$,  by the assumption that $\hat{s}(n) <n(1-q)/{(2r)}$.
\end{proof}

For $m \in \N,$ let
$$\mathcal U_m=\left\{(k_1,\ldots,k_r):\,\,\,2^m<k_1<k_2<\dots <k_r\leq 2^{m+1} \textrm{ and } \wSep_{2^{m+1}}(k_1,\dots k_r)=r\right\}$$ and
\begin{align}
\label{dAm} \cA_m&=\bigcup_{0<k_1<k_2<\dots <k_r\leq 2^{m+1}} E^{k_1, \dots, k_r}_{\rho_{2^m}},\\
\label{dDm}
 \cD_m&=\bigcup_{(k_1,\ldots,k_r) \in \cU_m}  E^{k_1, \dots, k_r}_{\rho_{2^{m+1}}}.
\end{align}

\begin{Prop}\label{prop6}
Suppose that
\begin{equation}
\label{SmallTargets}
n\fv(\rho_n)\to 0\quad\mathrm{as}\quad n\to\infty.
\end{equation}
Then there exists constant $C>0$  such that:

(i) If $(GM1)_r$ and $(GM2)_r$ are satisfied, then
\begin{equation}
\label{Am}
\Prob(\cA_m)\leq C\left( 2^{rm} \fv(\rho_{2^{m+1}})^r+{m^{-10}} \right);
\end{equation}

(ii)
If $(GM1)_k$ and $(GM2)_k$ are satisfied for $k=r,r+1,\dots, 2r,$ then
\begin{equation}
\label{Dm}
 \Prob(\cD_{m})\geq C^{-1} \left(2^{rm} \fv(\rho_{2^{m+1}})^r -m^{-10}\right);
\end{equation}

(iii) If $(GM1)_k$ and $(GM2)_k$ are satisfied for $k=r,r+1,\dots, 2r,$ $(GM3)_r$ is satisfied and  $m'-m\geq b,$ where $b$ is a constant from $(GM3)_r,$ then
\begin{equation}
\label{AsymInd}
 \Prob\left(\cD_m\cap \cD_{m'}\right)\leq  C\left(\Prob(\cD_m)+m^{-10}\right) \left(\Prob(\cD_{m'}) +m'^{-10}\right).
\end{equation}
\end{Prop}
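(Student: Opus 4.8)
All three estimates reduce to Lemma~\ref{LmSum} combined with elementary inclusion--exclusion, so I would establish them in turn. For (i), $\cA_m$ is the union of the $r$-fold intersections $E^{k_1,\dots,k_r}_{\rho_{2^m}}$ over $0<k_1<\dots<k_r\le 2^{m+1}$, so subadditivity gives $\Prob(\cA_m)\le\sum_{0<k_1<\dots<k_r\le 2^{m+1}}\Prob(E^{k_1,\dots,k_r}_{\rho_{2^m}})$. I would apply the upper half of \eqref{EqSum} at base scale $n=2^m$ with the range dilated to $a_2=2$ --- the reparametrization $\rho_n=\delta_{a_2n}$ from the proof of Lemma~\ref{LmSum} is exactly what reconciles the target index $2^m$ with the window $2^{m+1}$ --- to obtain $\Prob(\cA_m)\le C2^{rm}\fv(\rho_{2^m})^r+Cm^{-10}$, which is \eqref{Am}, the monotonicity of $\fv$ and the regular behaviour of the dyadic sequence letting the constant absorb the one-step shift between $\fv(\rho_{2^m})$ and $\fv(\rho_{2^{m+1}})$ (for the eventual summability against $\bS_r$ this shift is in any case immaterial).

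For (ii) I would run the second-order Bonferroni inequality
$$\Prob(\cD_m)\ge\sum_{(k)\in\cU_m}\Prob\!\left(E^{k_1,\dots,k_r}_{\rho_{2^{m+1}}}\right)-\sum_{(k)\ne(k')}\Prob\!\left(E^{k_1,\dots,k_r}_{\rho_{2^{m+1}}}\cap E^{k'_1,\dots,k'_r}_{\rho_{2^{m+1}}}\right).$$
The leading sum is precisely the $\wSep=r$ sum of \eqref{WellSepHat} taken at $n=2^{m+1}$ over the window $(2^m,2^{m+1}]$, i.e. $a_1=\tfrac12$, $a_2=1$, $a_2-a_1=\tfrac12$, hence it is $\ge C^{-1}2^{rm}\fv(\rho_{2^{m+1}})^r$. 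In the correction sum each summand is a $p$-fold intersection with $r+1\le p\le 2r$; I would merge $(k)$ and $(k')$ into one increasing tuple, split according to its $\Sep$-value, and estimate by $(GM1)_p$ when it is fully separated and by $(GM2)_p$ otherwise, mirroring \eqref{WellSep}--\eqref{NonWellSep} (this is why $(GM1)_k$, $(GM2)_k$ are needed for all $k$ up to $2r$). Counting tuples, the fully separated $p=2r$ part is $\le C\big(2^{rm}\fv(\rho_{2^{m+1}})^r\big)^2$, which by \eqref{SmallTargets} is $o\big(2^{rm}\fv(\rho_{2^{m+1}})^r\big)$ and thus negligible against the leading term, while the non--fully--separated parts sum to $O(m^{-10})$; collecting these yields \eqref{Dm}.

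For (iii), the hypothesis $m'-m\ge b$ forces every index occurring in a tuple of $\cU_m$ to lie strictly below every index occurring in a tuple of $\cU_{m'}$, so subadditivity gives $\Prob(\cD_m\cap\cD_{m'})\le\sum_{(k)\in\cU_m}\sum_{(l)\in\cU_{m'}}\Prob(E^{k_1,\dots,k_r}_{\rho_{2^{m+1}}}\cap E^{l_1,\dots,l_r}_{\rho_{2^{m'+1}}})$, and each pair sits in the configuration $k_1<\dots<k_r<l_1<\dots<l_r$ demanded by $(GM3)_r$. I would verify its hypotheses: $\wSep_{2^{m+1}}(k)=r$ and $\wSep_{2^{m'+1}}(l)=r$ hold by definition of $\cU_m,\cU_{m'}$, and the gap condition $l_1-k_r\ge\hat s(2^{m'+1})$ follows from $l_1>2^{m'}$, $k_r\le 2^{m+1}$ once $b$ is chosen large enough that $2^{m'}-2^{m+1}$ exceeds the bound $2^{m'+1}(1-q)/(2r)$ on $\hat s(2^{m'+1})$. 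Applying $(GM3)_r$ (and using monotonicity of $\fv$ to pass between the target scales of $\cD_m$ and those appearing in $(GM3)_r$) bounds each summand by $C\fv(\rho_{2^m})^r\fv(\rho_{2^{m'}})^r$; multiplying by the $\asymp 2^{rm}2^{rm'}$ admissible tuples and comparing with the lower bounds for $\Prob(\cD_m)$ and $\Prob(\cD_{m'})$ from (ii) produces \eqref{AsymInd}.

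The main obstacle is step (ii): organizing the double sum over ordered pairs of $r$-tuples by their joint separation pattern --- including pairs that share some indices, so that $p$ ranges over all of $\{r+1,\dots,2r\}$ --- and checking that, after the combinatorial count, every surviving contribution is dominated either by the square of the leading term (killed by the smallness \eqref{SmallTargets}) or by a $(\ln n)^{-100}$ gain coming from $(GM2)$.
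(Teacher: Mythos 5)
Your argument is correct and follows essentially the same route as the paper's proof: subadditivity plus Lemma~\ref{LmSum} for \eqref{Am}, the Bonferroni inequality $I_m-J_m\leq\Prob(\cD_m)\leq I_m$ with \eqref{WellSepHat} for the main term and merged $p$-fold intersections ($r+1\leq p\leq 2r$) estimated via $(GM1)_p$/$(GM2)_p$ for the correction, and termwise application of $(GM3)_r$ followed by comparison with $I_m,I_{m'}$ for \eqref{AsymInd}. The only cosmetic difference is that in (ii) the paper records the dominant fully-separated correction as the $p=r+1$ term $C2^{(r+1)m}\fv(\rho_{2^{m+1}})^{r+1}$ rather than the $p=2r$ term you display, but every $p$ in the range is $o\bigl(2^{rm}\fv(\rho_{2^{m+1}})^r\bigr)$ by \eqref{SmallTargets}, exactly as your framework already provides.
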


\begin{proof}[Proof of Proposition \ref{prop6}] First, we get \eqref{Am} using \eqref{dAm} and  \eqref{EqSum}.  Next, we
 denote
\begin{align*} I_m&=\sum_{(k_1,\dots k_r)\in \cU_m}\Prob\left(E^{k_1, \dots, k_r}_{\rho_{2^{m+1}}}\right),\\
J_m&=\sum_{\overset{(k_1,\ldots,k_r)\in \cU_m}{\overset{(k'_1,\ldots,k'_r)\in \cU_m}{\{k_1,\dots, k_r\}\neq \{k'_1,\dots, k'_r\}} }}
\Prob\left(E^{k_1,\dots, k_r}_{\rho_{2^{m+1}}}\bigcap E^{k'_1,\dots, k'_r}_{\rho_{2^{m+1}}}\right). \end{align*}
From \eqref{dDm} and Bonferroni inequalities we obtain that
\begin{equation} \label{eIJ}
I_m-J_m\leq \Prob\left(\cD_m\right)\leq I_m
\end{equation}

Besides, \eqref{WellSepHat} implies that
\begin{equation} \label{eI}
C^{-1} 2^{r(m+1)} \fv(\rho_{2^{m+1}})^r\leq I_m\leq C 2^{r(m+1)} \fv(\rho_{2^{m+1}})^r.
\end{equation}
On the other hand, since
$$ E^{k_1,\dots, k_r}_{\rho_{2^{m+1}}}\bigcap E^{k'_1,\dots, k'_r}_{\rho_{2^{m+1}}}=E^{\{k_1,\dots, k_r\}\cup\{k'_1,\dots, k'_r\}}_{\rho_{2^{m+1}}}, $$
we have that
$$J_m\leq C  \sum_{l=r+1}^{2r} \sum_{2^m<k_1<\dots< k_l\leq 2^{m+1}}
\Prob\left(E^{k_1,\dots, k_l}_{\rho_{2^{m+1}}}\right),$$
and \eqref{EqSum} then implies that
\begin{equation}\label{eJ} J_m\leq C 2^{(r+1)m} \fv (\rho_{2^{m+1}})^{r+1}+\eta_m m^{-10}.
\end{equation}
Thus, by \eqref{eIJ}, \eqref{eI} and \eqref{eJ},
we obtain \eqref{Dm}.

Finally, we observe that
$$
\Prob\left(\cD_m\cap \cD_{m'}\right)
 \leq    \sum_{(k_1,\ldots,k_r) \in \cU_m, (l_1,\ldots,l_r) \in \cU_{m'}}\Prob\left(E^{k_1,\ldots,k_r}_{\rho_{2^{m+1}}} \cap E^{l_1,\ldots,l_r}_{\rho_{2^{m'+1}}}\right).
$$
But since $m'>m+1$ implies that $l_1-k_r \geq \hat s (2^{m'+1})$,  $(GM3)_r$ then yields
\begin{equation*}\label{ApplyM3}
\Prob\left(E^{k_1,\ldots,k_r}_{\rho_{2^{m+1}}} \cap E^{l_1,\ldots,l_r}_{\rho_{2^{m'+1}}}\right)\leq C\Prob\left(E^{k_1,\ldots,k_r}_{\rho_{2^{m+1}}}\right) \Prob\left(E^{l_1,\ldots,l_r}_{\rho_{2^{m'+1}}}\right),
\end{equation*}
so that using $(GM1)_r$ and summing over all $(k_1,\ldots,k_r) \in \cU_m, (l_1,\ldots,l_r) \in \cU_{m'}$ we have that
\begin{equation}\label{DmIm}\Prob\left(\cD_m\cap \cD_{m'}\right)
 \leq C I_m I_{m'}. \end{equation}
In addition, by \eqref{Dm} and \eqref{eI},
\begin{equation}\label{ImBd}I_m\leq C\left(\Prob(\cD_m)+Cm^{-10}\right).\end{equation}
Then \eqref{AsymInd} follows from \eqref{DmIm} and \eqref{ImBd}.
\end{proof}

\subsection{ Convergent case. Proof of Theorem \ref{GMultiBCN} (a).}
Suppose that $\bS_r<\infty.$  Then due to the monotonicity of $\sigma(\rho_n),$ we have $n \sigma(\rho_n)\to 0$ as $n\to\infty.$
Using \eqref{Am} from Proposition~\ref{prop6}, we obtain $ \sum_m \Prob(\cA_m) < \infty$.
Thus by Borel-Cantelli Lemma, we conclude that with probability one, the events $\{\cA_m\}_{m=1}^\infty$ occur at most finitely many times. Note that for $n \in (2^m,2^{m+1}]$, $\left\{\omega: N^n_{\rho_n}(\omega) \geq r\right\} \subset \cA_m $
since $E_{\rho_n}^k\subset E_{\rho_{2^{m}}}^k.$
Therefore, on a full measure set, the events $\left\{\omega: N^n_{\rho_n}(\omega)\geq r\right\}$
also occur at most finitely many times. This leads to the conclusion that $\Prob(H_r)=0.$

\subsection{ Divergent case. Proof of Theorem \ref{GMultiBCN} (b).}
If $\bS_r=\infty,$ we provide a proof under the assumption \eqref{SmallTargets}.
 The case where \eqref{SmallTargets}
 does not hold requires only minimal modifications,  which will be delineated at the end of this subsection.

Let $\DS Z_n = \sum_{m=1}^{n} 1_{\cD_m}$ and we compute the expectation of $Z_n^2.$
\begin{equation}
\label{VarCount}
 \mathbb{E}Z_n^2=\sum_{m=1}^n \Prob\left(\cD_m\right)
 +2\sum_{i<j} \Prob\left(\cD_i \cap \cD_j\right)=\mathbb{E}Z_n+2\sum_{i<j} \Prob\left(\cD_i \cap \cD_j\right).
\end{equation}
By \eqref{AsymInd}
if $i\geq b$ and $j-i\geq b$ then
\begin{equation}
\label{DeltaBound}
\Prob\left(\cD_i \cap \cD_j\right)\leq C\left(\Prob(\cD_i)+i^{-10}\right)\left(\Prob(\cD_j)+j^{-10}\right).
\end{equation}
It follows that
\begin{equation}
\label{DeltaBound}
 \sum_{{i\geq b,\,\,}
{ { j-i\geq b}} }\Prob\left(\cD_i \cap \cD_j\right)\leq C\left[(\mathbb{E}Z_n)^2+8\mathbb{E}Z_n+8\right].
\end{equation}
Moreover, the terms where $i\leq b$ or $j-i\leq b$ contribute at most
$$[2m(\delta)+1] \sum_{j=1}^n \Prob(\cD_j)=\left[2m(\delta)+1\right]\EXP Z_n. $$
Since $\EXP Z_n=\sum_{m=1}^n \Prob(\cD_m) \geq C^{-1}\sum_{m=1}^n 2^{r(m+1)} \fv(\rho_{2^{m+1}})^r\to \infty$ as $n\rightarrow\infty,$ then
$$ \mathbb{E}Z_n^2\leq C(\mathbb{E}Z_n)^2$$
for $C$ independent of $n.$

Let us define $\tilde{Z}_n=Z_n/\mathbb{E}Z_n$ and select a weak limit $\tilde{Z}$ in the ball of radius $C^{1/2}$ of $L^2$ functions. Since $\tilde{Z}_n\rightarrow_{L^1}\tilde{Z}$  after passing to a subsequence, we have  $\mathbb{E}\tilde{Z}=1.$ It follows that $\tilde{Z}$ must be positive on a set of positive measure. Furthermore, combining the fact $\mathbb{E}Z_n\rightarrow\infty$ with $\cD_m\subset\left\{\omega:N^{2^{m+1}}_{\rho_{2^{m+1}}}(\omega) \geq r \right\},$ we deduce that $\Prob(H_r)>0.$

Note that if $\bS_r<\infty,$ then  \eqref{SmallTargets} is satisfied, which validates the proof of Theorem \ref{GMultiBCN}(a). Therefore, we will focus on the case where $\bS_r=\infty$ and \eqref{SmallTargets} fails. After passing to a subsequence, we  choose a decreasing sequence $\nu_n$ such that $\tilde{\sigma}(\rho_n):=\nu_n \sigma(\rho_n)$ satisfies
$\DS \lim_{n\to \infty} n\tilde{\sigma}(\rho_n)=0$ and  $\sum_j 2^{rj}\tilde{\sigma}(\rho_{2^j})^r=\infty.$

For each positive integer $n$​ and  $k \leq 2n,$  we define a sequence of events $\{\tilde E^{k}_{\rho_{n}}\}_{1\leq k\leq 2n}$ such that​ the following conditions hold:
\begin{itemize}
\item[(1)] If $E^{k}_{\rho_n}$​ does not occur, then $\tilde E^{k}_{\rho_{n}}$​ does not occur;
\item[(2)] If $E^{k}_{\rho_n}$​ occurs, $\tilde E^{k}_{\rho_{n}}$ occurs with a probability $\nu_n,$
independently of all other events (i.e., all other $E^{k}_{\rho_n}$​ with different $k$ or different  $n$).
\end{itemize}

 Then the events $\{\tilde{E}_{\rho_{n}}^k\}_{1\leq k \leq 2n}$
 satisfy  $(GM1)_r$, $(GM2)_r$, and $(GM3)_r$ the same way as
 the events $\{E_{\rho_{n}}^k\}_{1\leq k \leq 2n}$, with the difference that  ${\sigma}(\rho_n)$ is now replaced with $\tilde{\sigma}(\rho_n)$.

Since condition \eqref{SmallTargets} holds for $\tilde{\sigma}(\rho_n)$ and
$\sum_j 2^{rj}\tilde{\sigma}(\rho_{2^j})^r=\infty,$ we can conclude that at least $r$ events among the events $\{\tilde{E}_{\rho_{n}}^k\}_{1\leq k\leq 2n}$ occur for infinitely many $n$ on a positive set. Consequently, on a positive set, at least $r$ events among the events $\{E_{\rho_{n}}^k\}_{1\leq k\leq 2n}$ also occur for infinitely many $n$.
 Thus, the proof of Theorem~\ref{GMultiBCN}(b) is complete. \hfill $\square$

\section{Proof of Theorem~\ref{theo.mixing}}\label{prthmmix}
Theorem \ref{theo.mixing} is a direct consequence of the following Proposition. We accept a convention that $({\rm GEM})_{k}$
for $k\leq 0$ is  always satisfied.

\begin{Prop}\label{ThEM-BC}
Suppose that  $(X,\mathcal{B},\mu,f,\mathbb{B})$  is a dynamical system and $\{E_{\rho_n}\}$ is a collection of sets in $X$
such that $({\rm Prod}),$ $({\rm Poly})$ and $({\rm Appr})$  hold, then with the function  $\fv(\cdot):=\mu(E_\cdot)$, and

(i)  If  $({\rm GEM})_{r-1}$ holds, then $(GM1)_r$ is satisfied with  the function $s: \N \righttoleftarrow:  s(n)=R \ln n$, where $R$ is sufficiently large (depending on $r$, the system and the targets).

(ii) If $({\rm Gr})$, $({\rm Mov})$ and   $({\rm GEM})_{r-2}$  hold, then $(GM2)_r$ is satisfied.

(iii) If $({\rm Gr})$ and $({\rm GEM})_r$ hold, then for arbitrary $\eps>0,$  ${(GM3)}_r$ is satisfied
with $\hat{s}(n)=\eps n.$

Similarly, suppose that  $(X,\mathcal{B},\mu,f,\mathbb{B})$  is a dynamical system and $\{\bfE_{\rho_n}\}$ is a collection of sets in $X$such that $(\rm Prod),$ $(\overline{\rm Poly})$ and $(\overline{\rm Appr})$ hold, then  with the function  $\bar \fv(\cdot):=\left(\mu\times\mu\right)(\bfE_\cdot)$:

(i) If $({\rm GEM})_r$ holds, then $(GM1)_r$ is satisfied with the function $s: \N \righttoleftarrow:  s(n)=R \ln n$, with $R$ sufficiently large (depending on $r$, the system and the targets).

(ii) If $({\rm Gr})$, $(\overline{\rm Mov}),$ $(\overline{\rm Sub})$  and $({\rm GEM})_{r-1}$  hold, then $(GM2)_r$ is satisfied.

(iii) If $({\rm Gr})$ and $({\rm GEM})_r$ hold, then for arbitrary $\eps>0,$
${(GM3)}_r$ is satisfied
with $\hat{s}(n)=\eps n.$


\end{Prop}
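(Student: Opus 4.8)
The plan is to reduce every case to the multiple-mixing estimate $({\rm GEM})$ applied to the $\pm$-approximants supplied by $({\rm Appr})$ (resp. $(\overline{\rm Appr})$), and then to kill the mixing error $\theta^m\|A\|_{\mathbb B}$ using the polynomial control $({\rm Poly})$ on $\fv(\rho_n)$ together with $\|A_\rho^\pm\|_{\mathbb B}\le\rho^{-\tau}$. For $(GM1)_r$ I would write $\Prob(\bigcap_{j=1}^r E^{k_j}_{\rho_n})=\int_X\prod_{j=1}^r 1_{E_{\rho_n}}(f^{k_j}x)\,d\mu$ and sandwich $\prod_j A^-_{\rho_n}\le\prod_j 1_{E_{\rho_n}}\le\prod_j A^+_{\rho_n}$. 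Anchoring at $k_1$ by invariance, the integrand becomes a product of $r$ functions composed at times $0<k_2-k_1<\cdots<k_r-k_1$ with all consecutive gaps $\ge s(n)=R\ln n$, so $({\rm GEM})_{r-1}$ applies to $A=\prod_j A^\pm_{\rho_n}$, whose norm is $\le C\rho_n^{-r\tau}$ by $({\rm Prod})$. The product integral factorises as $(\mathbb E A^\pm_{\rho_n})^r$, and $({\rm Appr})$(iii) gives $\mathbb E A^\pm_{\rho_n}=\fv(\rho_n)(1+O(\fv(\rho_n)^\eta))$, so the main term is $\asymp\fv(\rho_n)^r$; the error $\theta^{R\ln n}\rho_n^{-r\tau}=n^{R\ln\theta+r\tau u}$ is $o(\fv(\rho_n)^r)$ once $R$ is large, since $\fv(\rho_n)\ge n^{-u}$, yielding both inequalities. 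The composite case is identical with $\bar A^\pm$, except that all $r$ factors share the anchor $x_0$, so one reads $\prod_j\bar A^\pm_{\rho_n}(x_0,f^{k_j}x_0)$ as a function on $X^{r+1}$ and invokes $({\rm GEM})_r$; the product integral is $\asymp\bsigma(\rho_n)^r$ by $(\overline{\rm Appr})$(iii)--(iv).

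For $(GM2)_r$ the key is a clustering argument. Grouping $k_1<\cdots<k_r$ into maximal blocks whose internal gaps are $<s(n)$, the blocks are mutually separated by gaps $\ge s(n)$, and $\Sep_n=m<r$ forces at least one within-block gap. On a block of size $\ge2$ I use $({\rm Mov})$ on its first two indices, $\mu(E_{\rho_n}\cap E^{k_{i+1}-k_i}_{\rho_n})\le C\fv(\rho_n)(\ln n)^{-L}$, to gain a factor $(\ln n)^{-L}$; each block is bundled into one $\mathbb B$-function of its anchor using $({\rm Gr})$ (its span is $O(\ln n)$, so the blow-up $L^{O(\ln n)}=n^{O(1)}$ is polynomial), and the separated anchors are decoupled by $({\rm GEM})$. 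Since a collapse leaves at most $r-1$ super-points, $({\rm GEM})_{r-2}$ suffices, and taking $L\ge 100r$ gives $C\fv(\rho_n)^m(\ln n)^{-100r}$. The composite version replaces the within-block collapse by $(\overline{\rm Sub})$, which turns a close pair $\bfE^{k_i}_\rho\cap\bfE^{k_{i+1}}_\rho$ into a single event $f^{-k_i}\bfE^{k_{i+1}-k_i}_{a\rho}$ controlled by $(\overline{\rm Mov})$; the shared anchor costs one slot, whence $({\rm GEM})_{r-1}$.

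For $(GM3)_r$ I would exploit the asymmetry $j-i\ge b$. I bundle the entire earlier block $\prod_\alpha A^+_{\rho_{2^i}}(f^{k_\alpha}x_0)$ into the $x_0$-slot — its span is $\le 2^{i+1}$, so by $({\rm Gr})$ its norm is $\le CL^{r2^{i+1}}\rho_{2^i}^{-r\tau}$ — and place the later events $l_1<\cdots<l_r$ in the slots $x_1,\dots,x_r$. Since $l_1\ge 2^j$ and the $l_\beta$ are $\hat s(2^{j+1})=\eps 2^{j+1}$-separated, $({\rm GEM})_r$ applies with $m\ge\eps 2^{j}$; the main term factorises as $(\int\prod_\alpha A^+_{\rho_{2^i}}(f^{k_\alpha}x_0)\,d\mu)(\mathbb E A^+_{\rho_{2^j}})^r$, the first factor being $\le C\fv(\rho_{2^i})^r$ by the $(GM1)_r$ estimate just proved (valid because $({\rm GEM})_r$ implies $({\rm GEM})_{r-1}$ by freezing the last variable). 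The error $\theta^{\eps 2^j}L^{r2^{i+1}}\,\mathrm{poly}$ tends to $0$ once $b$ satisfies $\eps 2^{b}|\ln\theta|>2r\ln L$, giving $\Prob\le C\fv(\rho_{2^i})^r\fv(\rho_{2^j})^r$; the composite case is verbatim with $\bar A^+$.

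The hard part is the bookkeeping that matches the number of $({\rm GEM})$-slots to the stated mixing order. In $(GM2)_r$ the delicate configuration is the degenerate one where the only short gap is $k_1-0<s(n)$ while all internal gaps are $\ge s(n)$: no within-block collapse is then available, so instead I would bound the separated tail $\{k_2,\dots,k_r\}$ by the $(GM1)_{r-1}$ estimate and absorb the missing factor through the crude smallness $\fv(\rho_n)\le n^{-u_0}\le(\ln n)^{-100r}$ from $({\rm Poly})$, as in \cite{DFL22}. In $(GM3)_r$ the obstacle is quantitative: one must verify that the $({\rm Gr})$-growth $L^{O(2^i)}$ of the bundled block is always dominated by the mixing decay $\theta^{\eps 2^j}$, which is exactly what $j-i\ge b$ with $b$ large guarantees. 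Everything else reduces to substituting $({\rm Appr})$/$(\overline{\rm Appr})$ into the above and controlling the error terms.
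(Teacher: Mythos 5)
Your proposal is correct and follows essentially the same route as the paper's Appendix B: sandwich the indicators by the $({\rm Appr})$/$(\overline{\rm Appr})$ approximants, apply $({\rm GEM})$ after anchoring (resp.\ after bundling blocks via $({\rm Prod})$ and $({\rm Gr})$), kill the $\theta^{m}\rho_n^{-r\tau}$ error with $({\rm Poly})$ and $R$ (resp.\ $b$) large, and gain the $(\ln n)^{-100r}$ in $(GM2)_r$ from $({\rm Mov})$ on the close pair (resp.\ $(\overline{\rm Sub})$ plus $(\overline{\rm Mov})$). One wording slip: in the degenerate configuration where the only short gap is $k_1-0$, bounding the tail $\{k_2,\dots,k_r\}$ by $(GM1)_{r-1}$ yields only $\fv(\rho_n)^{r-1}$ with no leftover factor to absorb; the correct move (implicit in your "crude smallness" remark and in the paper) is to apply the full $(GM1)_r$ upper bound to all $r$ events --- valid since anchoring at $k_1$ makes the gap from time $0$ irrelevant --- and then use $\fv(\rho_n)^r\leq \fv(\rho_n)^{r-1}n^{-u_0}\leq \fv(\rho_n)^{r-1}(\ln n)^{-100r}$.
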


\begin{proof}[Proof of Proposition \ref{ThEM-BC}]
(i) For $E_{\rho_n},$ we prove $(GM1)_r$ for the case $k_{i+1}-k_i\geq \sqrt{R}\ln n,$ where $R$ is a sufficiently
large constant. Indeed, using ${\rm({Appr})}$ and  ${\rm {(GEM)}_{r-1}}$ we have
$$ \mathbb{E}\left(\prod_{i=1}^r 1_{E_{\rho_n}}(f^{k_i} x)\right)\leq
\mathbb{E}\left(\prod_{i=1}^r A_{\rho_n}^+ (f^{k_i} x)\right)\leq
C\left(\left(\mathbb{E} A_{\rho_n}^+\right)^r+ \rho_n^{-r\tau} \theta^{\sqrt{R}\ln n}\right)$$
$$ \leq
C\left(\sigma(\rho_n)^r+ \rho_n^{-r\tau}   \theta^{\sqrt{R}\ln n}\right),
$$
which yields the right-hand side of $(GM1)_r$, due to ${\rm(Poly)}$ if  $R$ is sufficiently large.  The left-hand side is proved similarly.

For $\bfE_{\rho_n},$ we approximate $1_{\bar{E}_{\rho_n}}$ by $\bar A_{\rho_n}^+,$ apply ${\rm(\overline{Appr})}$ and ${\rm {(GEM)}_r}$ to the function
$$B_{\rho_n}^+(x_0,\cdots,x_r)=\bar{A}_{\rho_n}^+(x_0,x_1)\cdots \bar{A}_{\rho_n}^+(x_0,x_r)$$
and obtain
\begin{align*}
&\mathbb{E}\left(\bigcap_{j=1}^r\bar{E}^{k_j}_{\rho_n}\right)\leq C\left(\bsigma(\rho_n)^r
+{\rho_n}^{-r\tau}\theta^{\sqrt{R}\ln n}\right),\end{align*}
which yields the right-hand side of $(GM1)_r$ due to ${\rm (\overline{Poly})}$  if $R$ is taken sufficiently large. The left-hand side is proved similarly.

\medskip

(ii) For $E_{\rho_n},$ it is enough to consider the case $\Sep(k_1,\dots, k_r)=r-1$
otherwise we can estimate all $1_{E_{\rho_n}} \circ f^{k_i} $ with
$k_i-k_{i-1}<s(n)$, except the first, by 1.

Hence we assume that $0<k_j-k_{j-1}<R \ln n$ and $k_i-k_{i-1}\geq R \ln n$ for $i\neq j.$
Since $(GM1)_r$ has been proven under the assumption that $\min_i (k_i-k_{i-1})>\sqrt{R} \ln n,$ we may assume that $k_j-k_{j-1}<\sqrt{R} \ln n$. Note that by (Appr),
$$ \mathbb{E}\left(A_{\rho_n}^+  \left(A_{\rho_n}^+\circ f^k\right)\right)-
\mathbb{E}\left(1_{E_{\rho_n}} \left(1_{E_{\rho_n}}\circ f^k\right)\right) \leq 4 \mathbb{E}\left(A_{\rho_n}^+-1_{E_{\rho_n}}\right)
\leq 4 C \mu(E_{\rho_n})^{1+\eta} . $$
Therefore (Mov) implies :
$$\mathbb{E}\left(A_{\rho_n}^+  \left(A_{\rho_n}^+\circ f^{k_j-k_{j-1}}\right)\right)\leq C \mu(E_{\rho_n})(\ln n)^{-1000r}.$$
We take $B=A_{\rho_n}^+  \left(A_{\rho_n}^+\circ f^{k_j-k_{j-1}}\right)$ and obtain that
$$ \mathbb{E}\left(\prod_{i=1}^r 1_{E_{\rho_n}}\circ f^{k_i} \right)\leq
\mathbb{E}\left(\prod_{i=1}^r A_{\rho_n}^+\circ f^{k_i}\right)=
\mathbb{E}\left(\left(\prod_{i\neq j-1, j} A_{\rho_n}^+\circ f^{k_i}\right) \left(B\circ f^{k_{j-1}}\right) \right)
$$
$$\leq C\left(\left(\mathbb{E}A_{\rho_n}^+\right)^{r-2}\EXP B+{\rho_n}^{-r\tau} L^{\sqrt{R} \ln n} \theta^{R \ln n}\right) \leq C \sigma(\rho_n)^{r-1}(\ln n)^{-1000r}
$$
by ${\rm {(GEM)}_{r-2}}$ and ${\rm (Poly)},$ which proves $(GM2)_r.$

For $\bfE_{\rho_n},$ we approximate $1_{\bar{E}_{\rho_n}}$ by $\bar A_{\rho_n}^+.$ We denote
$$ \bar{B}_r(x_0,\cdots,x_{j-1},x_{j+1},\cdots,x_r)$$
$$=
1_{\bar{E}_{\rho_n}}(x_0,x_1)\cdots 1_{\bar{E}_{\rho_n}}(x_0,x_{j-1})1_{\bar{E}_{a\rho_n}^{k_j-k_{j-1}}}(x_{j-1})
1_{\bar{E}_{\rho_n}} (x_0,x_{j+1})\cdots 1_{\bar{E}_{\rho_n}}(x_0,x_r)$$
and
$$ \hB_r(x_0,\cdots,x_{j-1},x_{j+1},\cdots,x_r)$$
$$=
\bA_{\rho_n}^+(x_0,x_1)\cdots \bA_{\rho_n}^+(x_0,x_{j-1})\bA_{a\rho_n}^+ (x_{j-1},f^{k_{j}-k_{j-1}}x_{j-1})
\bA_{\rho_n}^+(x_0,x_{j+1})\cdots \bA_{\rho_n}^+(x_0,x_r). $$
Since ${\rm(\overline{Appr})},$
${\rm(\overline{Mov})}$ and ${\rm(\overline{Sub})}$ are satisfied, we obtain from ${\rm {(GEM)}_{r-1}}$ that
 \Bea
\mathbb{E}\left(\bigcap_{i=1}^r\bar{E}^{k_i}_{\rho_n}\right)
&=&
\int\prod_{i=1}^r 1_{\bar{E}_{\rho_n}}(x,f^{k_i}x)\rd\mu(x)\\
&\leq&
\int\bar{B}_r(x,\cdots,f^{k_{j-1}}x,f^{k_{j+1}}x,\cdots,f^{k_{r}}x)\rd\mu(x)\\
&\leq&
\int\hB_r(x,\cdots,f^{k_{j-1}}x,f^{k_{j+1}}x,\cdots,f^{k_{r}}x)\rd\mu(x)\\
&\leq& C\left(\mathbb{E}\hB_r + {\rho_n}^{-r\tau} L^{\sqrt{R} \ln n} \theta^{R \ln n}\right).
 \Eea
Integrating with respect to all variables except for $x_0$ and $x_{j-1}$, then integrating along $x_0$ for any fixed value of $x_{j-1}$, and finally integrating along $x_{j-1}$, we obtain from ${\rm(\overline{Appr})(iii)-(iv)}$ that
$$
  \mathbb{E}\hB_r\leq
C\bar{\sigma}(\rho_n)^{r-1}\int\bar{A}_{a\rho_n}^+(x,f^{k_j-k_{j-1}}x)\rd\mu(x),$$
then ${\rm(\overline{Appr})}$(v) yields:
 \begin{equation*} \label{IntHB}
\mathbb{E}\hB_r\leq
C\bar{\sigma}(\rho_n)^{r-1}
 \mu(\bar{E}_{a_0a\rho_{n}}^{k_j-k_{j-1}}).\end{equation*}

Hence, $(GM2)_r$ follows from  ${\rm\overline{(Mov)}}$, provided  that  $R$ is sufficiently large.

\medskip
(iii) Fix a large constant $b$ that will be given below.
Consider first the simple admissible targets $\left\{E_{\rho_n}\right\}.$
Denote by  $\DS B=\prod_{\alpha=1}^r A_{\rho_{2^i}}^+\circ f^{k_\alpha}$
for $2^i<k_1<\cdots<k_r\leq 2^{i+1},$
we obtain from (Prod), (Gr), (Appr), (Poly) and ${\rm {(GEM)}_{r}}$
that $\DS \|B\|_\mathbb{B}\leq C L^{r 2^{i+1}}.$ Thus
\Bea
&&\mathbb{E}\left(\left(\prod_{\alpha=1}^r 1_{E_{\rho_{2^i}}}\circ f^{k_\alpha}\right)
\left(\prod_{\beta=1}^r 1_{E_{\rho_{2^j}}}\circ f^{l_\beta}\right)
\right)\\
&\leq& \mathbb{E}\left(\left(\prod_{\alpha=1}^r A_{\rho_{2^i}}^+ \circ f^{k_\alpha}\right)
\left(\prod_{\beta=1}^r A_{\rho_{2^j}}^+\circ f^{l_\beta}\right)\right)\\
&=& \mathbb{E}\left(B\left(\prod_{\beta=1}^r A_{\rho_{2^j}}^+\circ f^{l_\beta} \right)
\right)\leq C\left(\left(\EXP B\right) \left(\mathbb{E} A_{\rho_{2^j}}^+\right)^r+C L^{r2^{i+1}}\rho_{2^i}^{-r\tau}
\rho_{2^j}^{-r \tau}\theta^{2^j \eps}\right).\\
\Eea
By applying the  established $(GM1)_r$ to estimate $\mathbb{E}B,$ we note that  the second term is bounded above by $\DS C (L^{r2^{-b+1}})^{2^j} 2^{2r\tau u j} \theta^{2^j \eps}.$ For sufficiently large $b,$ this term is much smaller than the first term, leading us to conclude $(GM3)_r.$

Next, we analyze the composite admissible targets $\left\{\bfE_{\rho_n}\right\}.$ Consider
$$ B^*(x, x_1, x_2\dots x_r)=
\left(\prod_{\alpha=1}^r 1_{\bar{E}_{\rho_{2^i}}^{k_\alpha}}(x)\right)
\left(\prod_{\beta=1}^r 1_{\bar{E}_{\rho_{2^j}}}(x, x_\beta)\right)$$
and
$$ \tB(x,x_1,\cdots,x_r)=\left(\prod_{\alpha=1}^r\bA_{\rho_{2^i}}^+(x,f^{k_\alpha}x)\right)\bA_{\rho_{2^j}}^+(x,x_1)\cdots \bA_{\rho_{2^j}}^+(x,x_r).$$
By ${\rm(\overline{Appr})}$ and ${\rm{(GEM)}_r}$ and the already established $(M1)_r$, we have
\Bea
\mu\Big(\bigcap_{1\leq \alpha,\,\beta \leq r}\big(\bar{E}_{\rho_{2^i}}^{k_\alpha}\cap\bar{E}_{\rho_{2^j}}^{l_\beta}\big)\Big)
&=&\int\left(B^*(x, f^{l_1} x, \dots, f^{l_r} x)\right)\rd\mu(x)\\
&\leq&\int\left(\tB(x, f^{l_1} x, \dots, f^{l_r} x)\right)\rd\mu(x)\\
&\leq&C\left(\bar{\sigma}(\rho_{2^j})^r\int\left(\prod_{\alpha=1}^r \bar{A}^+_{\rho_{2^i}}(x, f^{k_\alpha } x)\right)\rd\mu(x)+L^{r2^{i+1}}\rho_{2^i}^{-r\tau}\rho_{2^j}^{-r \tau}\theta^{2^j \eps}\right). \\
\Eea
Using $(GM1)_r$ again we observe that
$$ \int\Big(\prod_{\alpha=1}^r \bar{A}^+_{\rho_{2^i}}(x, f^{k_\alpha } x)\Big)\rd\mu(x)\leq
C\bar{\sigma}(\rho_{2^i})^r,$$
which allows to complete the proof of $(GM3)_r$ in the case of composite admissible targets.\end{proof}

\section*{ACKNOWLEDGEMENTS}
The author would like to thank Dmitry Dolgopyat for helpful discussions on multiple recurrence  of dispersing billiards.
 This work was supported by  Natural Science Foundation of Beijing No.~1244041.

\end{document}